\newtheorem{theorem}{Theorem}[section]
\newtheorem{corollary}[theorem]{Corollary}
\newtheorem{definition}[theorem]{Definition}
\newtheorem{lemma}[theorem]{Lemma}
\newtheorem{proposition}[theorem]{Proposition}
\newtheorem{remark}[theorem]{Remark}
\newcommand{\bra}{\langle}
\newcommand{\ket}{\rangle}
\DeclareMathOperator{\Aut}{Aut}
\DeclareMathOperator{\ch}{char}
\DeclareMathOperator{\Ind}{Ind}
\DeclareMathOperator{\D}{\mathsf{D}}
\DeclareMathOperator{\F}{\mathbb{F}}
\DeclareMathOperator{\w}{\theta}
\begin{document}

\title{The Noether number for the groups with a cyclic subgroup of index two}             
\author{K\'alm\'an Cziszter $^a$ 
\thanks{The paper is based on results from the PhD thesis of the first author written at the Central European University.}
\qquad and \qquad M\'aty\'as Domokos $^b$ 
\thanks{The second author is partially supported by OTKA  NK81203 and K101515.}
}
\date{}
\maketitle 
{\small \begin{center} 
$^a$ Central European University, Department of Mathematics and its Applications, 
N\'ador u. 9, 1051 Budapest, Hungary \\
Email: cziszter\_kalman-sandor@ceu-budapest.edu \\
 $^b$ R\'enyi Institute of Mathematics, Hungarian Academy of Sciences,\\
 Re\'altanoda u. 13-15, 1053 Budapest, Hungary \\
Email: domokos.matyas@renyi.mta.hu
\end{center}
}

\begin{abstract} For  the finite groups with a cyclic subgroup of index two the exact degree bound for  the generators of rings of polynomial invariants  is determined. 
\end{abstract}


\section{Introduction}\label{sec:intro} 
 
The Noether number  $\beta(G)$ of a finite group $G$ is $\sup_V\beta(G,V)$, where $V$ ranges over all finite dimensional $G$-modules $V$ over a fixed base field $\F$, and 
$\beta(G,V)$ is the smallest integer $d$ such that 
the algebra $\F[V]^G := \{f \in \F[V]: f^g =f\; \forall g \in G \}$  of polynomial invariants 
is generated by its elements of degree at most $d$. By Noether's classic result \cite{noether:1916} we have 
$\beta(G) \le |G|$ if $\ch(\F)=0$, and  Fleischmann \cite{fleischmann} and   Fogarty \cite{fogarty} proved the same inequality when $\ch(\F)$ does not divide the order of $G$.  
 Recently, it has been proved that |apart from four particular groups  of small order| 
the inequality $\beta(G) \ge \frac 1 2 |G|$ holds only if $G$ is cyclic or $G$ has a
cyclic subgroup of index two (see Theorem 1.1 in \cite{CzD:1}). It is well known and easy to see that for the cyclic group $Z_n$ we have $\beta(Z_n)=n$. 
The main result of  the present article is Theorem~\ref{thm:betaindex2}, giving the precise value of $\beta(G)$ 
for every non-cyclic group containing a cyclic subgroup of index $2$. It turns out that for these groups the difference $\beta(G) - \frac 1 2 |G|$  equals $1$ or $2$. 
Despite the longstanding interest in the Noether number of finite groups, there are relatively few groups for which the exact value is known. 
It is of some interest therefore that a few infinite series of groups is added now to the list of groups with known Noether number.

A remarkable  consequence of Theorem~\ref{thm:betaindex2} and the main result of \cite{CzD:1} is that   for any constant $c>1/2$, up to isomorphism  there are only finitely many non-cyclic groups $G$ with $\beta(G)/|G|>c$, whereas there are infinitely many isomorphism classes of groups $G$ with $\beta(G)/|G|>1/2$. 
In particular,  $1/2$ is a limit point in the set $\{\beta(G)/|G| : G\mbox{ is a  finite group}\}\subset \mathbb{Q}$, and there are no other limit points  between $1/2$ and $1$.

In Section~\ref{sec:prel} we recall the generalized Noether numbers $\beta_k(G)$ and some related reduction lemmata introduced in \cite{CzD:1}. 
They play an essential role in the proof of Theorem~\ref{thm:betaindex2}. 
In Section~\ref{sec:lower} we give a general lower bound on the Noether number of a group $G$ with a normal subgroup $N$ such that $G/N$ is abelian, in terms of the Noether numbers of $N$ and $G/N$. In the following sections we investigate the generalized Noether numbers of various groups; in the last section these results will be combined with the aid of the reduction lemmata to yield a proof of the main result. 
First the appearance of zero-sum sequences in the problem is explained in Section~\ref{sec:GA}. 
The dihedral group $D_{2n}$ and some relatives are investigated in Section~\ref{sec:dihedral}. Moreover, some additional information on the indecomposable invariants of degree 
$\beta(D_{2n})$ is derived in Section~\ref{sec:extremal}, that is the basis of computation of the Noether number of certain central extensions of $D_{2n}$.  
The groups $Z_r\rtimes_{-1}Z_{4d}$ (where $r$ and $4d$ are co-prime integers, $r\ge3$) are treated in Section~\ref{sec:kontrakcio} by the so-called contraction method, that seems to be applicable in other situations. (A necessary combinatorial statement is proved in the previous Section~\ref{sec:zero-sum}.) 
The direct product of the  quaternion group of order $8$ and an odd order cyclic group needs a separate treatment performed in Section~\ref{sec:quaternion}. 
Finally, after recalling the list of the groups with a cyclic subgroup of index $2$, 
in Section~\ref{sec:index2} we combine the results in the earlier sections to derive the main result Theorem~\ref{thm:betaindex2}, giving the exact value 
of the generalized Noether numbers for each non-cyclic  group with a cyclic subgroup of index $2$.

\section{Preliminaries}\label{sec:prel}

Throughout this paper $\F$ denotes our base field, and $G$ will be a finite group with $\ch(\F)\nmid |G|$. 
By a $G$-module we mean here a finite dimensional $\F$-vector space endowed with a linear action of the finite group $G$. 
Note that $\beta(G)$ is unchanged if we replace $\F$ by its algebraic closure, therefore we shall assume that $\F$ is algebraically closed. 
Given a finitely generated graded module $M=\bigoplus_{d=0}^\infty M_d$ over a commutative graded $\F$-algebra $R=\bigoplus_{d=1}^\infty R_d$ with 
$R_0=\F$ and  $s\in\mathbb{N}$ write   $M_{\le s}:=\bigoplus_{d=0}^s M_d$ and $M_{\ge s}:=\bigoplus_{d=s}^\infty  M_d$. For a positive integer $k$ define 
 \[\beta_k(M,R):= \min\{s\in\mathbb{N}\mid M\mbox{ is generated as an }R_+^k\mbox{-module by } M_{\le s}\}\]
where $R_+^k$ is the $k$-th power of the maximal homogeneous ideal $R_+:=\bigoplus_{d=1}^\infty R_d$ of $R$. 
By the graded Nakayama Lemma $\beta_k(M,R)$ is the maximal degree of a non-zero homogeneous component of the factor space 
$M/R_+^kM$ (inheriting the grading from $M$). 
Viewing $R_+$ as an $R$-module we write 
\[\beta_k(R):=\beta_k(R_+,R).\] 

The \emph{generalized Noether numbers} of the finite group $G$ were introduced in \cite{CzD:1} as follows: 
for a $G$-module $V$ write 
\[\beta_k(G,V):=\beta_k(\F[V]^G)\]  where $\F[V]$ is the symmetric tensor algebra of $V^*$, the dual of $V$, so 
$\F[V]$ is a $\dim(V)$-variable polynomial ring endowed with its standard grading. In particular, $\F[V]_1=V^*$.  
Moreover,  set 
\[\beta_k(G):=\sup_V\beta_k(G,V)\]
where $V$ ranges over all $G$-modules over $\F$. In the special case $k=1$ we recover the  Noether number. 
The finiteness of $\beta_k(G)$ follows from the obvious inequality $\beta_k(G)\leq k\beta(G)$; this inequality is strict in general (see \cite{CzD:3} for more information in this respect), and the usefulness of the concept of the  generalized Noether numbers stems from the following statements proved in \cite{CzD:1} 
(below for subsets $S,T$ in a commutative $\F$-algebra we write $ST$ for the $\F$-vector space spanned by $\{st\mid s\in S,t\in T\}$, and 
$S^d:=S\dots S$ (with $d$ factors): 

\begin{lemma}\label{lemma:reduction} 
Let $H$ be a subgroup of $G$ and $V$ a $G$-module. 
\begin{itemize} 
\item[(i)] We have $(\F[V]_+^H)^{[G:H]}\subseteq \F[V]_+^H\F[V]_+^G+\F[V]_+^G$. 
\item[(ii)] We have $\beta_k(\F[V]_+,\F[V]^G)\le \beta_{k[G:H]}(\F[V]_+,\F[V]^H)$. 
In particular, 
\[\beta_k(G,V)\le \beta_{k[G:H]}(H,V).\] 
\item[(iii)] If $H$ is normal in $G$, then $\beta_k(G,V)\le \beta_{\beta_k(G/H)}(H,V)$. 
\end{itemize}
\end{lemma}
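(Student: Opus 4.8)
The plan is to treat part~(i) as the combinatorial core and to derive (ii) and (iii) from it by formal manipulations with graded ideals together with the averaging (Reynolds) operators, which are available since $\ch(\F)$ divides none of $|G|$, $[G:H]$, $|H|$, $|G/H|$. I abbreviate $m:=[G:H]$, $A:=\F[V]^G$ and $B:=\F[V]^H$, so that $A\subseteq B\subseteq\F[V]$, $A_+=A\cap\F[V]_+$, $B_+=B\cap\F[V]_+$, and (i) reads $B_+^{\,m}\subseteq B_+A_++A_+$ (ideal powers).

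For (i) I would first settle the case of an $m$-th power: for homogeneous $f\in B_+$ the finite set $\{g\cdot f\mid gH\in G/H\}$ is well defined, since $f$ is $H$-fixed, and $G$ permutes it, so the monic polynomial $\prod_{gH\in G/H}(T-g\cdot f)=T^m-c_1T^{m-1}+\cdots+(-1)^mc_m$ has all coefficients in $A$, with $c_j\in A_+$ for $j\ge1$; evaluating at $T=f$ gives $f^m\in A_+B_++A_+$. To reach an arbitrary product $f_1\cdots f_m$ of homogeneous elements of $B_+$, polarise: over $\F[t_1,\dots,t_m]\otimes_\F\F[V]$ with $G$ fixing the $t_i$, the element $\sum_i t_if_i$ is again a positive-degree $H$-invariant, so the previous step applies, and the coefficient of $t_1\cdots t_m$ yields $m!\,f_1\cdots f_m\in A_+B_++A_+$. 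I expect the main obstacle of the whole lemma to be that only $m$, not $m!$, is a priori invertible; this is handled exactly as in Fleischmann's proof of Noether's bound, by replacing the crude polarisation with a relative transfer identity yielding $m\cdot f_1\cdots f_m$ (rather than $m!\cdot f_1\cdots f_m$) in $A_+B_++A_+$, after which one divides by $m$.

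For (ii), an induction on $k$ starting from $B_+^{\,m}\subseteq B_+A_++A_+$ (multiply out, using $B_+B_+\subseteq B_+$ and $A_+A_+\subseteq A_+$) gives $B_+^{\,km}\subseteq B_+A_+^{\,k}+A_+^{\,k}$ and, absorbing one further factor $B_+$, the sharper $B_+^{\,km+1}\subseteq B_+A_+^{\,k}$. The module statement follows because multiplying the first inclusion by $\F[V]_+$ absorbs the $A_+^{\,k}$-summand (as $B_+\F[V]_+\subseteq\F[V]_+$), so $B_+^{\,km}\F[V]_+\subseteq A_+^{\,k}\F[V]_+$ and hence $\beta_k(\F[V]_+,A)\le\beta_{km}(\F[V]_+,B)$. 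For the inequality $\beta_k(G,V)\le\beta_{km}(H,V)$, take a homogeneous $f\in A_+\subseteq B_+$ of degree exceeding $\beta_{km}(B_+,B)=\beta_{km}(H,V)$; then $f\in B_+^{\,km+1}\subseteq B_+A_+^{\,k}$, and applying the Reynolds operator of $G$ to an expression $f=\sum_i b_ic_i$ with $b_i\in B_+$, $c_i\in A_+^{\,k}$ homogeneous pushes each $b_i$ into $A_+$ and fixes the $c_i$, so $f\in A_+^{\,k+1}$; this says exactly $\beta_k(A_+,A)\le\beta_{km}(B_+,B)$.

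For (iii), where $H\trianglelefteq G$ and $\Gamma:=G/H$ acts on $B$ by graded automorphisms with $B^\Gamma=A$: fix a finite-dimensional graded $\Gamma$-submodule $U\subseteq B_+$ generating $B$ as an algebra, and pass to $P:=\mathrm{Sym}(U)=\F[U^*]$, which carries, besides the grading inherited from $B$, its standard grading, the $U$-degree. Since $\Gamma$ respects both gradings there is a $\Gamma$-equivariant graded surjection $\pi\colon P\twoheadrightarrow B$ fixing $U$, with $\pi(P^\Gamma_+)=A_+$ by averaging. Put $d:=\beta_k(G/H)$; by the definition of the generalized Noether number $\beta_k(P^\Gamma)\le d$ for the $U$-degree, so every $U$-homogeneous $\Gamma$-invariant of $U$-degree $>d$ lies in $(P^\Gamma_+)^{k+1}$. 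Now a homogeneous $f\in A_+$ of degree exceeding $\beta_d(B_+,B)=\beta_{\beta_k(G/H)}(H,V)$ lies in $B_+^{\,d+1}$, hence equals $\pi(\tilde f)$ for some $\tilde f\in P$ whose $U$-homogeneous components all have $U$-degree $\ge d+1$; replacing $\tilde f$ by its image under the Reynolds operator of $\Gamma$ (which preserves the $U$-degree and, $\pi$ being equivariant, still maps to $f$) we may take $\tilde f\in P^\Gamma$, each component of which lies in $(P^\Gamma_+)^{k+1}$, so $f=\pi(\tilde f)\in(\pi(P^\Gamma_+))^{k+1}=A_+^{\,k+1}$. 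Thus $\beta_k(A_+,A)\le\beta_{\beta_k(G/H)}(H,V)$, i.e.\ $\beta_k(G,V)\le\beta_{\beta_k(G/H)}(H,V)$, which is (iii). Apart from the characteristic subtlety in (i), the only genuinely routine technicalities are the compatibility of the two gradings on $P$ with $\pi$ and with averaging.
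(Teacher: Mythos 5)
First, a point of reference: the paper does not actually prove Lemma~\ref{lemma:reduction} --- it is quoted from \cite{CzD:1} --- so there is no internal proof to compare against, and your proposal has to stand on its own. On that basis, your derivations of (ii) and (iii) are correct and complete: the induction $B_+^{km}\subseteq B_+A_+^k+A_+^k$ and $B_+^{km+1}\subseteq B_+A_+^k$, the absorption argument for the module statement, the Reynolds-operator step turning $f=\sum_i b_ic_i$ (with $c_i\in A_+^k$ already $G$-invariant) into an element of $A_+^{k+1}$, and the two-graded polynomial ring $P=\mathrm{Sym}(U)$ in (iii) --- which is exactly Schmid's classical reduction argument --- all check out, and the only invertibility hypotheses you invoke are covered by the standing assumption $\ch(\F)\nmid|G|$. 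The single-power case of (i) via the characteristic polynomial of the $G$-orbit of $f$ is also fine.

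The gap is in the passage from $f^m$ to a general product $f_1\cdots f_m$ in (i), and it sits exactly where you flag it --- but flagging it is not the same as closing it. That passage, with only $m=[G:H]$ rather than $m!$ inverted, \emph{is} the relative Fleischmann--Fogarty theorem; it is the entire nontrivial content of the lemma, and you never exhibit the ``relative transfer identity yielding $m\cdot f_1\cdots f_m$'' on which everything rests. To see that this is not a routine adaptation: the natural candidate identity is
$\sum_{T\subseteq\{1,\dots,m\}}(-1)^{m-|T|}\bigl(\prod_{i\notin T}f_i\bigr)\,\tau^G_H\bigl(\prod_{i\in T}f_i\bigr)=\sum_{j=1}^m\prod_{i=1}^m\bigl(f_i^{g_j}-f_i\bigr)$,
whose $T=\emptyset$ term isolates $(-1)^m\,m\,f_1\cdots f_m$ while all other left-hand terms visibly lie in $B_+A_++A_+$; but this only reduces the claim to showing that the residual sum $\sum_j\prod_i(f_i^{g_j}-f_i)$ over the cosets lies in $B_+A_++A_+$, which is not formal and is precisely where Benson's, Fogarty's and Fleischmann's arguments do their real work. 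Moreover those published arguments treat the absolute case $H=\{1\}$, $m=|G|$; the relative statement needs the adaptation actually carried out in \cite{CzD:1}. So your outline points at the right method and correctly localises the difficulty (that only $m$, not $m!$, is invertible), but the one genuinely hard step of the lemma is cited rather than proved.
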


For later use we recall the \emph{relative transfer map} 
\[ \tau_H^G(u) = \sum_{i=1}^n u^{g_i} \] 
where $g_1,\dots,g_n$ is a system of right $H$-coset representatives in $G$. 
(In the special case when $H$ is the $1$-element subgoup of $G$ we write $\tau^G$ instead of $\tau^G_{\{1\}}$.)  
The map $\tau^G_H$  is a graded $\F[V]^G$-module epimorphism from $\F[V]^H$ onto $\F[V]^G$. 
We shall use this fact most frequently in the following form: 

\begin{proposition} \label{trans}
We have 
$\beta_k(G,V)\le \beta_k(\F[V]^H_+, \F[V]^G)$. 
\end{proposition}

It was shown in \cite{schmid} that for an abelian group $A$ we have $\beta(A)=\D(A)$, the Davenport constant of $A$, defined as the maximal length of an irreducible zero-sum sequence over $A$. (For definitions and notation related to zero-sum sequences see Section~\ref{sec:GA}.) 
The generalized Noether number also has its ancestor for abelian groups, namely 
$\beta_k(A)=\D_k(A)$, the $k$th generalized Davenport constant of $A$ introduced in \cite{halter-koch} as the maximal length of a zero-sum sequence over $A$ that does not factor as the product of $k+1$ non-empty zero-sum sequences over $A$.

\section{A lower bound} \label{sec:lower}

Schmid \cite{schmid} proved that the Noether number is monotone with respect to taking  subgroups. This extends for the generalized Noether number as well: 

\begin{lemma}\label{lemma:monotone} 
Let $W$ be a finite dimensional $H$-module, where $H$ is a subgroup of a finite group $G$, 
and denote by $V$ the $G$-module induced from $W$. 
Then the inequality $\beta_k(G,V)\geq \beta_k(H,W)$ holds for all positive integers $k$. 
\end{lemma}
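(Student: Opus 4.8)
The plan is to compare rings of invariants on $V = \Ind_H^G W$ and on $W$ via an $H$-equivariant quotient map, exploiting the concrete description of the induced module. Recall that as an $H$-module, $V = \Ind_H^G W$ decomposes as $V \cong \bigoplus_{i=1}^n g_i \otimes W$ where $g_1,\dots,g_n$ is a system of left coset representatives of $H$ in $G$; the summand $1 \otimes W$ (where $g_1 = 1$) is an $H$-submodule of $V$ isomorphic to $W$. Dually, $V^*$ has $W^*$ as an $H$-module quotient: the projection $\pi\colon V^* \to W^*$ onto the dual of the $1\otimes W$ summand is an $H$-module epimorphism. First I would extend $\pi$ to a graded $\F$-algebra epimorphism $\tilde\pi\colon \F[V] \to \F[W]$ (the symmetric algebra is functorial), which is $H$-equivariant, hence restricts to a map $\F[V]^H \to \F[W]^H$. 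The crucial point is that this restriction $\F[V]^H \to \F[W]^H$ is \emph{surjective}: this is where the direct-sum structure of $V|_H$ is used, since $\tilde\pi$ has an $H$-equivariant algebra section coming from the inclusion $W^* = (1\otimes W)^* \hookrightarrow V^*$ of the corresponding direct summand of $V^*$, and composing with this section shows every $H$-invariant on $W$ lifts to an $H$-invariant on $V$.

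Once surjectivity of $\rho := \tilde\pi|_{\F[V]^H}\colon \F[V]^H \to \F[W]^H$ is established, the inequality follows formally. Indeed $\rho$ is a graded $\F$-algebra epimorphism, so it maps a generating set of $\F[V]^H$ in degrees $\le \beta_k(H,V)\cdot$(something) — more precisely, one argues at the level of the defining property of $\beta_k$: if $\F[V]^H_+ = (\F[V]^H_+)^{k+1} + U$ with $U$ spanned by homogeneous elements of degree $\le \beta_k(H,V)$, applying the algebra epimorphism $\rho$ gives $\F[W]^H_+ = (\F[W]^H_+)^{k+1} + \rho(U)$, and $\rho(U)$ is spanned by homogeneous elements of degree $\le \beta_k(H,V)$. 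Hence $\beta_k(H,W) \le \beta_k(H,V)$. Then combining with Lemma~\ref{lemma:reduction} is \emph{not} what we want — rather, we still need $\beta_k(H,V) \le \beta_k(G,V)$? No: the desired chain is $\beta_k(G,V) \ge \beta_k(H,V) \ge \beta_k(H,W)$, where the \emph{first} inequality is the subtle one.

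So the genuine content splits into two halves: (a) $\beta_k(H, V) \ge \beta_k(H, W)$, handled by the surjection $\rho$ above; and (b) $\beta_k(G,V) \ge \beta_k(H,V)$, i.e.\ passing from the subgroup $H$ to $G$ \emph{with the same module} $V = \Ind_H^G W$. For (b) the key is that $V$, as a $G$-module, is \emph{self-induced} in the sense that $V|_H$ still contains $W$, and in fact $\F[V]^G \hookrightarrow \F[V]^H$ with the relative transfer $\tau_H^G$ giving a surjection the other way; but more usefully, one shows directly that an indecomposable $H$-invariant of top degree on $V$ can be promoted to witness the same degree for $G$. The cleanest route to (b): use that $\Res^G_H \Ind_H^G W \supseteq W$ together with Frobenius reciprocity–type bookkeeping on the coset structure, or alternatively observe that $\F[V]^G = (\F[V]^H)^{\langle \text{coset action}\rangle}$ and apply a Reynolds-type averaging argument over $G/H$ which, because $\ch(\F)\nmid|G|$, is a projection — showing that the $k$-th power of the augmentation ideal cannot "absorb" a generator of $\F[V]^H$ in degree exceeding $\beta_k(G,V)$ without contradicting the corresponding statement over $G$.

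The main obstacle I anticipate is part (b): making precise why the Noether-type generation degree does not drop when enlarging the group from $H$ to $G$, for a \emph{fixed} module $V$. This is genuinely the heart of Schmid's monotonicity argument in the $k=1$ case, and the generalization to $\beta_k$ requires checking that the relevant argument — constructing, from a degree-$\beta_k(H,W)$ indecomposable $H$-invariant $f$ on $W$, a degree-$\beta_k(H,W)$ element of $\F[V]^G$ that is not in $(\F[V]^G_+)^{k+1}$ — survives the passage to powers of the ideal. I would expect to use the lifted invariant $\hat f \in \F[V]^H$ and its image $\tau_H^G(\hat f)$ or an appropriate product/symmetrization, and verify indecomposability modulo $(\F[V]^G_+)^{k+1}$ by projecting back via $\rho$ and the averaging operator; the technical care needed is in controlling cross-terms of the form (low-degree $G$-invariant)$\times$(product of $k$ $G$-invariants) under these maps. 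Everything else — functoriality of the symmetric algebra, equivariance, the section splitting off the $1\otimes W$ summand — is routine.
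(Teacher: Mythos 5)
Your step (a) is correct: restriction of functions to the summand $W\subseteq V=\bigoplus_{g\in G/H}gW$ is an $H$-equivariant graded algebra surjection $\phi:\F[V]\to\F[W]$, split by the $H$-equivariant inclusion $\F[W]\hookrightarrow\F[V]$ coming from the projection $V\to W$, and a surjective graded algebra map onto $\F[W]^H$ forces the corresponding inequality of $\beta_k$'s. The genuine gap is step (b): you reduce the lemma to the intermediate claim $\beta_k(G,V)\ge\beta_k(H,V)$, i.e.\ monotonicity of $\beta_k$ under enlarging the group \emph{for a fixed module}, and then only gesture at how to prove it (``Frobenius reciprocity--type bookkeeping'', ``Reynolds-type averaging''), explicitly flagging it as the main obstacle. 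This claim is not established, and the formal mechanism you used in (a) cannot establish it: there is no graded algebra surjection from $\F[V]^G$ onto $\F[V]^H$ (the inclusion goes the wrong way and is not surjective; the transfer $\tau^G_H$ surjects in the wrong direction and is not multiplicative). As it stands the proposal does not prove the lemma.

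The fix --- and the paper's actual argument --- is to bypass the ring $\F[V]^H$ altogether and show that the \emph{same} restriction map $\phi$ already surjects from $\F[V]^G$ onto $\F[W]^H$. Given $f\in\F[W]^H$ of positive degree, viewed inside $\F[V]$ via your section, the element $\tau(f)=\sum_{g^{-1}\in G/H}f^{g}$ is $G$-invariant, and $\phi(\tau(f))=f$: every term with $g\notin H$ vanishes upon restriction to $W$, since such an $f^g$ depends only on the component of its argument in a summand $g^{-1}W\ne W$, which is zero on $W$. Hence $\phi|_{\F[V]^G}:\F[V]^G\to\F[W]^H$ is a surjective graded algebra homomorphism, so $\F[V]^G_d\subseteq(\F[V]^G_+)^{k+1}$ immediately gives $\F[W]^H_d=\phi(\F[V]^G_d)\subseteq(\F[W]^H_+)^{k+1}$, which is exactly $\beta_k(G,V)\ge\beta_k(H,W)$. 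You had all the ingredients (the restriction map, the splitting, the transfer) but assembled them so as to manufacture an unnecessary and unproven intermediate inequality.
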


\begin{proof} View $W$ as an $H$-submodule of 
\begin{equation}\label{eq:inducedrep}V=\bigoplus_{g\in G/H}gW\end{equation}  
where $G/H$ stands for a system of left $H$-coset representatives. 
Restriction of functions from $V$ to $W$ is  a graded $\F$-algebra surjection  
$\phi:\F[V]\to\F[W]$.  Clearly  $\phi$ is $H$-equivariant, hence maps $\F[V]^G$ into $\F[W]^H$. Even more, as observed in the proof of Proposition 5.1 of \cite{schmid},  we have $\phi(\F[V]^G)=\F[W]^H$: indeed, 
the projection from $V$ to $W$ corresponding to the direct sum decomposition \eqref{eq:inducedrep} identifies 
$\F[W]$ with a subalgebra of $\F[V]$, and for an arbitrary $f\in \F[W]^H\subset\F[W]\subset \F[V]$, we get that 
$\tau(f):= \sum_{g^{-1}\in G/H}  f^g\in \F[V]^G$ is a $G$-invariant mapped to $f$ by $\phi$. 
Hence if $\F[V]^G_d\subseteq (\F[V]^G_+)^{k+1}$ for some  integer $d>0$ then 
$\F[W]^H_d=\phi(\F[V]^G_d)\subseteq \phi((\F[V]^G_+)^{k+1})=(\F[W]^H_+)^{k+1}$.  
By definition of the generalized Noether number we get that $\beta_k(G,V)\geq \beta_k(H,W)$. 
\end{proof} 

\begin{corollary}\label{cor:monotone} Let $H$ be a subgroup of  a finite group $G$. Then for all positive integers $k$ we have the inequality 
$\beta_k(H)\leq \beta_k(G)$. 
\end{corollary}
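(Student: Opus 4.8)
The plan is to deduce Corollary~\ref{cor:monotone} from Lemma~\ref{lemma:monotone} by a supremum argument over $G$-modules. The key observation is that Lemma~\ref{lemma:monotone} already produces, for \emph{each} finite dimensional $H$-module $W$, a particular $G$-module $V$ (the induced module $\Ind_H^G W$) with $\beta_k(G,V)\geq\beta_k(H,W)$. Since $\beta_k(G)=\sup_V\beta_k(G,V)$ where $V$ ranges over all $G$-modules, and $\Ind_H^G W$ is one such $G$-module, we get $\beta_k(G)\geq\beta_k(G,\Ind_H^G W)\geq\beta_k(H,W)$ for every $H$-module $W$.

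Taking the supremum over all finite dimensional $H$-modules $W$ on the right-hand side yields $\beta_k(G)\geq\sup_W\beta_k(H,W)=\beta_k(H)$, which is exactly the claimed inequality. This works uniformly for every positive integer $k$, since Lemma~\ref{lemma:monotone} is stated for all $k$.

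There is essentially no obstacle here: the corollary is a formal consequence of the lemma together with the definition of $\beta_k$ as a supremum over modules. The only point worth a word of care is that one should check that the supremum defining $\beta_k(G)$ really is over \emph{all} $G$-modules (not, say, only faithful ones), so that the induced module $\Ind_H^G W$ is admissible as a test module — but this is precisely how $\beta_k(G)$ was defined in Section~\ref{sec:prel}. Thus the proof is a two-line argument, and I would present it as such.

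\begin{proof}
Let $W$ be an arbitrary finite dimensional $H$-module and let $V=\Ind_H^G W$ be the induced $G$-module. By Lemma~\ref{lemma:monotone} we have $\beta_k(H,W)\leq\beta_k(G,V)\leq\sup_U\beta_k(G,U)=\beta_k(G)$, where $U$ ranges over all $G$-modules. Since $W$ was arbitrary, taking the supremum over all finite dimensional $H$-modules $W$ gives $\beta_k(H)=\sup_W\beta_k(H,W)\leq\beta_k(G)$.
\end{proof}
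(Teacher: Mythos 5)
Your proof is correct and is exactly the argument the paper intends: the corollary is stated without proof as an immediate consequence of Lemma~\ref{lemma:monotone}, obtained by inducing an arbitrary $H$-module up to $G$ and taking suprema. Nothing to add.
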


Next we give a strengthening of Corollary~\ref{cor:monotone} in the special case when $H$ is normal in $G$ and the factor group $G/H$ is abelian. 
For a character $\theta\in \widehat{G/H}$ denote by $\F[V]^{G,\theta}$ the space 
$\{f\in\F[V]\mid f^g=\theta(g) f\quad \forall g\in G\}$ of the \emph{relative $G$-invariants of weight} $\theta$. Generalizing the construction in the  proof of Lemma~\ref{lemma:monotone}, for 
$f\in \F[W]^H\subset \F[V]$ (here again $V = \Ind_H^G W$) set 
\[\tau^{\theta}(f):=\sum_{g^{-1}\in G/H} \theta(g)^{-1} f^{g} \in \F[V]^{G,\theta}.\] 
Then $\phi(\tau^{\theta}(f))=f$, hence 
\begin{equation}\label{eq:fifvgteta}
\phi(\F[V]^{G,\theta})=\F[W]^H \text{ holds for all }\theta\in\widehat{G/H}.
\end{equation} 

Let $U:=\bigoplus_{i=1}^dU_i$ be a direct sum of one-dimensional $G/H$-modules $U_i$. 
Making the identification
$\F[U\oplus V]=\F[U]\otimes \F[V]
=\bigoplus_{\alpha\in \mathbb{N}_0^d}x^{\alpha}\otimes \F[V]$, 
where the variables $x_1,\ldots,x_d$ in $\F[U]$ are $G/H$-eigenvectors with weight denoted by $\theta(x_i)$ (see  the conventions  in Section~\ref{sec:GA}), we have
\begin{equation}\label{eq:fuplvg}
\F[U\oplus V]^G=\bigoplus_{\alpha\in\mathbb{N}_0^d}x^{\alpha}\otimes \F[V]^{G,-\theta(x^{\alpha}) }
\end{equation}
Setting $\tilde\phi:=\mathrm{id}\otimes \phi:\F[U\oplus V]\to\F[U]\otimes \F[W]$, 
\eqref{eq:fifvgteta} and \eqref{eq:fuplvg} imply  that
\begin{equation}\label{eq:idotimesphi}
 \tilde\phi(\F[U\oplus V]^G_+)=\F[U]^G_+\oplus \bigoplus_{\alpha\in\mathbb{N}_0^d}x^{\alpha}\otimes 
 \F[W]^H_+. 
\end{equation}

\begin{theorem}\label{thm:lower} 
Let $H$ be a normal subgroup of a finite group $G$ with $G/H$ abelian. Then for all positive integers $k$ we have the inequality 
\[\beta_k(G)\geq \beta_k(H)+\D(G/H)-1.\] 
\end{theorem}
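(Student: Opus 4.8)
The plan is to combine the two ingredients that the preceding lemmas provide: the monotonicity machinery of Lemma~\ref{lemma:monotone} (which realizes $\F[W]^H$ as a quotient of $\F[V]^G$ via restriction), and a suitably chosen abelian piece on which we can exploit a long irreducible zero-sum sequence over $G/H$. Concretely, pick an $H$-module $W$ with $\beta_k(H,W)$ close to $\beta_k(H)$, and let $V:=\Ind_H^G W$. Separately, take an irreducible zero-sum sequence $S=\theta(x_1)\cdots\theta(x_\ell)$ over $\widehat{G/H}\cong G/H$ of length $\ell=\D(G/H)$, and let $U:=\bigoplus_{i=1}^\ell U_i$ be the corresponding direct sum of one-dimensional $G/H$-modules (pulled back to $G$), so that the monomial $x:=x_1\cdots x_\ell\in\F[U]$ is a $G$-invariant of degree $\D(G/H)$ whose ``weight'' is trivial, while no proper sub-monomial $x^\alpha$ with $0\le\alpha\le(1,\dots,1)$, $\alpha\ne 0,(1,\dots,1)$, is $G$-invariant.

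Next I would work in $\F[U\oplus V]^G$ and use the description~\eqref{eq:idotimesphi} of $\tilde\phi(\F[U\oplus V]^G_+)$. The idea is to show that if $h\in\F[W]^H_d$ is an indecomposable invariant (more precisely, represents a nonzero class in $\F[W]^H_+/(\F[W]^H_+)^{k+1}$ of degree $d=\beta_k(H,W)$), then the element $x\otimes h\in\F[U\oplus V]^G$, of degree $d+\D(G/H)$, is not in $(\F[U\oplus V]^G_+)^{k+1}$. Suppose it were; applying $\tilde\phi$ and projecting onto the summand $x^{(1,\dots,1)}\otimes\F[W]^H$ in~\eqref{eq:idotimesphi}, a product of $k+1$ positive-degree $G$-invariants would have to contribute to this summand. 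Because each factor maps under $\tilde\phi$ into the right-hand side of~\eqref{eq:idotimesphi}, and because the only way to build the monomial $x^{(1,\dots,1)}$ in the $\F[U]$-variables from such factors — given that each factor's $x$-part is itself a $G/H$-invariant monomial only when it is a sub-monomial corresponding to a zero-sum subsequence of $S$, but $S$ is irreducible — is for exactly one factor to carry the whole $x$ and the rest to carry no $x$-variables at all. That forces $h$ itself to lie in $(\F[W]^H_+)^{k+1}$ after absorbing at most the one $x$-bearing factor into the count, contradicting indecomposability once one checks the bookkeeping on the number of factors. This yields $\beta_k(G)\ge\beta_k(G,U\oplus V)\ge d+\D(G/H)=\beta_k(H,W)+\D(G/H)$, and letting $W$ vary gives the bound with $\beta_k(H)$; the ``$-1$'' appears because when $W$ is the trivial module one must be slightly more careful and the extremal configuration loses a degree, or equivalently because $\D(G/H)-1$ is the relevant drop once we only assert indecomposability of $x\otimes h$ rather than of a degree-$(d+\D(G/H))$ element built more economically.

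The main obstacle I anticipate is precisely this last combinatorial bookkeeping: making rigorous the claim that in any factorization of $x\otimes h$ into $k+1$ positive-degree $G$-invariants, the $U$-variable content cannot be distributed nontrivially among the factors. This rests on the irreducibility of the zero-sum sequence $S$ — a nontrivial splitting of $x^{(1,\dots,1)}$ into two $G/H$-invariant monomials would give a splitting of $S$ into two nonempty zero-sum subsequences — but one has to track this through the tensor decomposition~\eqref{eq:fuplvg}, where a factor's $x^\alpha$-component need not itself be $G$-invariant; rather its weight must be cancelled by the weight of its $\F[V]$-part. So the correct statement is that the $V$-parts must compensate, and one has to argue that such compensation, combined with the requirement that the $\F[V]^{G,\theta}$-parts restrict under $\phi$ into $\F[W]^H$, still forces the all-or-nothing distribution. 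I would handle this by a careful weight/degree count, possibly passing to the induced-module decomposition~\eqref{eq:inducedrep} to see that the $\F[V]$-factors restricting nontrivially to $W$ must already be $H$-invariant (not merely relative invariants), so their weights are trivial and the compensation argument collapses to the purely abelian statement about $S$.

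A secondary technical point to get right is the treatment of the quotient $\F[W]^H_+/(\F[W]^H_+)^{k+1}$ versus the naive notion of indecomposability: one should phrase everything in terms of $\beta_k$ as the top nonvanishing degree of $M/R_+^kM$ (as recalled after the definition of $\beta_k(M,R)$ in Section~\ref{sec:prel}), fix a homogeneous element of $\F[W]^H$ of degree $\beta_k(H,W)$ surviving in this quotient, and then show the corresponding element of $\F[U\oplus V]^G$ survives in $\F[U\oplus V]^G_+/(\F[U\oplus V]^G_+)^{k+1}$. This keeps the argument uniform in $k$ and avoids any special pleading for small $k$.
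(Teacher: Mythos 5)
Your construction starts from an \emph{irreducible zero-sum sequence} $S$ of length $\ell=\D(G/H)$, so that the monomial $x:=x_1\cdots x_\ell$ is itself a positive-degree $G$-invariant. This is fatal. First, a small point: $x\otimes h$ with $h\in\F[W]^H$ is not literally in $\F[U\oplus V]^G$; by \eqref{eq:fuplvg} you need the $\F[V]$-part to be a (relative) $G$-invariant, so you must replace $h$ by its transfer $\tau(h)\in\F[V]^G$. But then $x\otimes\tau(h)=(x\otimes 1)\cdot(1\otimes\tau(h))$ is a product of two positive-degree $G$-invariants, so for $k=1$ it lies in $(\F[U\oplus V]^G_+)^2$ and your key non-membership claim is simply false. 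For general $k$ your ``all-or-nothing'' analysis explicitly permits the factorization in which one factor is $x\otimes 1$ and the remaining $k$ factors carry $\tau(h)$; absorbing that one $x$-bearing factor ``into the count'' only yields $h\in(\F[W]^H_+)^{k}$, which contradicts nothing. No bookkeeping can rescue this, because the bound your element would certify, $\beta_k(G)\geq\beta_k(H)+\D(G/H)$, is false in general: take $H=\{1\}\le G=Z_2$ and $k=1$, where $\beta_1(\{1\})+\D(Z_2)=1+2=3$ but $\beta(Z_2)=2$. Your closing remark that the $-1$ ``appears because when $W$ is trivial one must be slightly more careful'' is a sign that the mechanism producing the $-1$ has not been located.

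The paper's proof differs at exactly this point: it takes the $\theta_i$ to form a \emph{maximal zero-sum free sequence}, of length $d=\D(G/H)-1$ (this is where the $-1$ enters). Then no nonempty submonomial of $x_1\cdots x_d$ is $G$-invariant, so $\F[U]^G_+$ cannot contribute to the component $x_1\cdots x_d\otimes\F[W]^H$ and the decomposability you worry about cannot occur. Since $x_1\cdots x_d$ now has nonzero weight $\theta=\sum\theta_i$, one compensates with the twisted transfer $\tau^{\theta}(f)\in\F[V]^{G,\theta}$ of an $f\in\F[W]^H$ of degree $\beta_k(H,W)$ with $f\notin(\F[W]^H_+)^{k+1}$, obtaining a genuine $G$-invariant $t=x_1\cdots x_d\otimes\tau^{\theta}(f)$ of degree $\beta_k(H)+\D(G/H)-1$. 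The delicate weight-tracking you anticipate is then avoided entirely: applying $\tilde\phi$ and using \eqref{eq:idotimesphi}, the zero-sum freeness kills the $\F[U]^G_+$ summand, and the specialization $x_i\mapsto 1$ pushes a hypothetical factorization of $t$ down to $f\in(\F[W]^H_+)^{k+1}$, a contradiction. I recommend you rebuild your argument around a maximal zero-sum free sequence and relative invariants of nonzero weight rather than around an irreducible zero-sum sequence.
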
 

\begin{proof} Take $W,V,U=\bigoplus_{i=1}^dU_i$ as above, where  we have $\beta_k(H)=\beta_k(H,W)$ in addition, 
and the characters $\theta_1,\ldots,\theta_d$ of the summands $U_i$  constitute  a maximal length zero-sum free sequence over the abelian group $\widehat{G/H}$ 
(see Section~\ref{sec:GA} for zero-sum sequences). 
In particular, $d=\D(G/H)-1$ (since  $\F$ is assumed to be algebraically closed). 
Choose a homogeneous $H$-invariant  $f\in \F[W]^H$  of degree $\beta_k(H,W)$, 
not contained in $(\F[W]^H_+)^{k+1}$, and consider the $G$-invariant 
\[t:=x_1\cdots x_d\otimes \tau^{\theta}(f)\in \F[U\oplus V]^G,\] 
where 
$\theta=\sum_{i=1}^d\theta_i$ (we write the character group $\widehat{G/H}$ additively). 
Then $t\in \F[U\oplus V]^G$ is homogeneous of degree $d+\beta_k(H,W)$.  
We will show that $t\notin (\F[U\oplus V]_+^G)^{k+1}$, implying 
$\beta_k(G,U\oplus V)\geq  \beta_k(H,W)+d=\beta_k(H)+d$. 
Indeed, assume to the contrary that $t\in  (\F[U\oplus V]_+^G)^{k+1}$. Then by \eqref{eq:idotimesphi} we have 
\[x_1\cdots x_d\otimes f=\tilde\phi(t)\in \left(\F[U]^G_+\oplus \bigoplus_{\alpha\in\mathbb{N}_0^d}x^{\alpha}\otimes \F[W]^H_+\right)^{k+1}.\] 
Since $\F[U]^G_+$ is spanned by monomials not dividing the monomial $x_1\cdots x_d$ (recall that $\theta_1,\ldots,\theta_d$ is a zero-sum free sequence), we conclude that 
\begin{equation}\label{eq:xotimesf}
x_1\cdots x_d\otimes f\in \left(\bigoplus_{\alpha\in\mathbb{N}_0^d}x^{\alpha}\otimes \F[W]^H_+\right)^{k+1}.
\end{equation}
Denote by $\rho:\F[U]\otimes \F[V]\to\F[V]$ the $\F$-algebra homomorphism given by the specialization $x_i\mapsto 1$ ($i=1,\ldots,d$). 
Applying $\rho$ to (\ref{eq:xotimesf}) we get that 
$f\in (\F[W]^H_+)^{k+1}$, contradicting  the choice of $f$. 
\end{proof} 

\begin{remark} 
(i) The proof of  Theorem~\ref{thm:lower} also yields the stronger conclusion 
\begin{align} 
\beta_k(G) \ge \max_{0 \le s \le k-1}  \beta_{k-s}(H) + \D_{s+1}(G/H) -1 
\end{align}

(ii) If $G$ is abelian, we get  
$\D_k(G)\geq \D_k(H)+\D(G/H)-1$  for any subgroup $H\le G$. 
For the case $G=H\oplus H_1$, this was proved in \cite{halter-koch}, Proposition~3~(i). 
\end{remark}


\section{The role of zero-sum sequences}\label{sec:GA} 

In the rest of the paper we shall deal with the following situation: there is a distinguished non-trivial abelian normal subgroup $A$ in $G$, 
and any $G$-module $V$ has an $A$-eigenbasis permuted up to non-zero scalar multiples by $G$. 
This holds for example when $A$ is an index two subgroup, since then 
an irreducible $G$-module is either $1$-dimensional or is induced from a $1$-dimensional $A$-module. 
We shall always tacitly assume that our variables $x_1,\dots,x_n$ are permuted up to non-zero scalar multiples by $G$ and 
$x_i^a=\theta_i(a)x_i$ for all $a\in A$, where $\theta_i:A\to \F^\times$ is a character of $A$, called the \emph{weight}  of $x_i$.  
The set of characters of $A$ is denoted by $\hat{A}$;  there is a (non-canonic) isomorphism $\hat{A} \cong A$ of abelian group, and we shall write 
$\hat A$ additively. 
Let $M(V)$ denote the set of monomials in $\F[V]$; this is a monoid with respect to ordinary multiplication and unit element $1$. 
On the other hand we denote by $\mathcal{M}(\hat{A})$ the free commutative monoid generated by the elements of $\hat{A}$.
Define a monoid homomorphism $\Phi: M(V) \to \mathcal{M}(\hat{A})$
by sending each variable $x_i$ to its weight $\theta_i$.  
We shall call $\Phi(m)$ the \emph{weight sequence} of the monomial $m \in M(V)$. 

An element $S \in \mathcal{M}(\hat{A})$ can  be interpreted as a \emph{sequence} 
$S:=(s_1,\ldots,s_n)$ of elements of $\hat{A}$ where their order is disregarded and repetition of elements is allowed; we call the number occurrences of an element its 
\emph{multiplicity} in $S$. 
The \emph{length} of $S$ is $|S|:=n$. 
By a \emph{subsequence} of $S$ we mean $S_J := (s_j\mid j\in J)$ for some subset $J\subseteq \{1,\ldots,n\}$. %
Given a sequence $R$ over an abelian group $A$ we write 
$R=R_1R_2$ if $R$ is the concatenation of its subsequences $R_1$, $R_2$,
and we call the expression $R_1R_2$ a \emph{factorization} of $R$. 
Given an element $a\in A$ and a positive integer $r$, write $(a^r)$ for the sequence in which $a$ occurs with multiplicity $r$. 
For an automorphism $b$ of $A$ and a sequence $S=(s_1,\dots,s_n)$ we write $S^b$ for the sequence 
$(s_1^b,\dots,s_n^b)$, and we say that the sequences $S$ and $T$ are \emph{similar} if $T=S^b$ 
for some $b \in \Aut(A)$.

Let $\theta: \mathcal{M}(\hat{A}) \to \hat{A}$ be the monoid homomorphism 
which assigns to each sequence over $A$ the sum of its elements. 
The value $\theta(\Phi(m)) \in \hat A$ is called the \emph{weight of the monomial} $m \in M(V)$ and it will be abbreviated by $\w(m)$. 
The sequence $S$ is a \emph{zero-sum sequence} if $\theta(S)=0$. 
Our interest in zero-sum sequences 
and the related results in additive number theory  stems from the observation that
the invariant ring $\F[V]^A$ is spanned as a vector space by all those monomials
for which $\Phi(m)$ is a zero-sum sequence over $\hat A$. Moreover, as an algebra, $\F[V]^A$ is minimally 
generated by those monomials $m$ for which $\Phi(m)$ 
does not contain any proper zero-sum subsequences. 
These are  called \emph{irreducible} zero-sum sequences.  
A sequence is \emph{zero-sum free} if it has no non-empty zero-sum subsequence. 
See for example \cite{geroldinger-gao} for a survey on zero-sum sequences.


\section{Groups of dihedral type } \label{sec:dihedral}

\begin{definition}\label{def:zerocorner} 
A sequence $C$ over an abelian group $A$ is called a 
\emph{zero-corner} if $C$ has a factorization $C=EFH$ into non-empty subsequences $E,F,H$ such that $EF$ and $EH$ are zero-sum sequences. 
We denote by $\rho(C)$  
the minimal value of $\max \{ |EF|, |EH|, |FH| \}$ 
over all factorizations $C=EFH$ satisfying the above properties,
and we call it the  \emph{diameter} of $C$. 
\end{definition}

\begin{lemma}\label{lemma:zerocorner}
Let $S=(s_1,\ldots,s_l)$ be a sequence over $A$ consisting of non-zero elements. Suppose that $S$ contains a maximal zero-sum free subsequence of length $d\leq l-3$. Then $S$ contains   a zero-corner $C$ with $\rho(C) \leq d+1$.
\end{lemma}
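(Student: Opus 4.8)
The plan is to produce the required zero-corner by extracting structure from a maximal zero-sum free subsequence $T$ of $S$ of length $d$. Write $S = T \cdot (s_{j_1}, \ldots, s_{j_{l-d}})$, so there are at least three ``extra'' elements $s_{j_1}, s_{j_2}, s_{j_3}$ outside $T$. Since $T$ is a \emph{maximal} zero-sum free subsequence, each $T \cdot (s_{j_r})$ contains a non-empty zero-sum subsequence; because $T$ itself is zero-sum free, such a subsequence must actually use $s_{j_r}$. Hence for $r=1,2,3$ there is a subsequence $E_r \subseteq T$ with $E_r \cdot (s_{j_r})$ a zero-sum sequence, i.e. $\theta(E_r) = -s_{j_r}$.

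Now I would distinguish cases according to how the $E_r$ overlap inside $T$. The cleanest case is when two of them, say $E_1$ and $E_2$, are \emph{disjoint} subsequences of $T$; then setting $E := E_1$, $F := (s_{j_1})$, and $H := E_2 \cdot (s_{j_2})$ gives $EF = E_1 \cdot (s_{j_1})$ a zero-sum, and $EH = E_1 E_2 \cdot (s_{j_2})$ has sum $-s_{j_1} - s_{j_2} - s_{j_2}$ --- which is not obviously zero, so this naive choice needs adjustment. A better move is to work with the \emph{symmetric differences}: if $E_1, E_2 \subseteq T$ both have prescribed sums, then the subsequence $E_1 \triangle E_2$ (elements in exactly one of them, counted with the appropriate multiplicities) together with $(s_{j_1}, s_{j_2})$ has sum $\theta(E_1) - 2\theta(E_1 \cap E_2) + \theta(E_2) + s_{j_1} + s_{j_2}$; to control this one should instead compare $\theta$ on the common part. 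The right formulation is: pick $E := E_1 \cap E_2$ (the common part, with multiplicities $\min$), $F$ the part of $E_1 \cdot (s_{j_1})$ outside $E$, and $H$ the part of $E_2 \cdot (s_{j_2})$ outside $E$; then $EF = E_1 \cdot (s_{j_1})$ and $EH = E_2 \cdot (s_{j_2})$ are both zero-sum by construction. This is a genuine zero-corner provided $E, F, H$ are all non-empty, and $|EF| = |E_1| + 1 \le d+1$, $|EH| = |E_2| + 1 \le d+1$, while $|FH| = |E_1| + |E_2| - 2|E| + 2 \le d+1$ needs to be checked.

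The main obstacle, therefore, is the non-emptiness and size bookkeeping: one must ensure that $E_1 \cap E_2$ is non-empty (so $E \ne \emptyset$) and that $|FH| \le d+1$. If $E_1$ and $E_2$ are disjoint then $E = \emptyset$ and the above fails --- but in that case $E_1 \cup E_2 \subseteq T$ has length $\le d$, and one can throw in the third extra element: consider $E_3 \subseteq T$ with $\theta(E_3) = -s_{j_3}$, and use a pigeonhole/inclusion argument on the three subsequences $E_1, E_2, E_3$ of the $d$-element sequence $T$ to force two of them to overlap, or to rearrange so that the ``middle'' piece $E$ is non-empty. When $E_1 \subseteq E_2$ one can take $E = E_1$, $F = (s_{j_1})$, $H = (E_2 \setminus E_1) \cdot (s_{j_2})$, which is cleaner and automatically satisfies all three length bounds since $|F| = 1$, $|E| \le d$, $|H| \le d - |E_1| + 1 \le d$. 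I expect the proof to reduce to two or three such overlap configurations, in each of which the diameter bound $\rho(C) \le d+1$ follows from $|E_i| \le d$ together with the single extra element contributed to $F$ or to $H$; the hypothesis $d \le l-3$ is exactly what guarantees three extra elements are available to run the overlap argument. Keeping track of multiplicities (subsequences, not subsets) throughout is the routine-but-delicate part.
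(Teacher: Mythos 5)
Your treatment of the overlapping case is correct and coincides with the paper's: with $E:=E_1\cap E_2$, $F:=(E_1\setminus E_2)\cdot(s_{j_1})$, $H:=(E_2\setminus E_1)\cdot(s_{j_2})$ one gets $EF$, $EH$ zero-sum and $|FH|=|E_1\cup E_2|-|E_1\cap E_2|+2\le d+1$ as soon as $E_1\cap E_2\neq\emptyset$ (and the analogous choice works for any overlapping pair among $E_1,E_2,E_3$). The genuine gap is the pairwise disjoint case. Your hope that a ``pigeonhole/inclusion argument \dots forces two of them to overlap'' cannot work: three pairwise disjoint subsequences of $T$ need only occupy at most $d$ positions in total, so nothing forces an overlap, and the $E_r$ may even be uniquely determined. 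Concretely, take $A=Z_2\times Z_2\times Z_2$, let $T=(e_1,e_2,e_3)$ consist of a basis (a maximal zero-sum free sequence, $d=3$), and let $S=T\cdot(e_1,e_2,e_3)$, so $l=6=d+3$; then necessarily $E_r=(e_r)$ for each $r$, and these are pairwise disjoint with no alternative choices available.

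What is missing is a different construction for that configuration. If the three zero-sum blocks $B_r:=E_r\cdot(s_{j_r})$ are pairwise disjoint, take $E:=B_1$, $F:=B_2$, $H:=B_3$. Then $EF$ and $EH$ are zero-sum simply because each is a concatenation of two zero-sum sequences; note that the definition of a zero-corner puts no constraint on the individual sums of $E$, $F$, $H$, so $E$ need not sit inside $T$ at all. The diameter is the sum of the two largest of $|B_1|,|B_2|,|B_3|$, namely $|B_1|+|B_2|+|B_3|-\min_r|B_r|\le(d+3)-2=d+1$, where $|B_r|\ge 2$ because every element of $S$ is non-zero and hence $E_r\neq\emptyset$ --- a hypothesis of the lemma that your write-up never invokes but which is essential exactly at this point. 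In the example above this produces $E=(e_1,e_1)$, $F=(e_2,e_2)$, $H=(e_3,e_3)$ with diameter $4=d+1$, which your framework (always placing $E$ inside $T$ and a single extra element in each of $F$ and $H$) cannot reach.
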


\begin{proof}  For $I\subseteq \{1,...,l\}$ we denote by $S_I$ the subsequence $(s_i : i \in I)$. 
We may suppose that a maximal zero-sum free subsequence of $S$ is $S_J$ where $J = \{1,...,d\}$. 
For each $i = 1, 2, 3$ a nonempty subset $H_i \subseteq J \cup \{d+i\}$ exists  such that $S_{H_i}$ is an irreducible zero-sum sequence and $d+i\in H_i$.  
Observe that $|H_i| \geq  2$ as the zero-sum sequence $S_{H_i}$ must consist of non-zero elements. 
There are two cases: 
\begin{itemize}
\item[(i)] If the three sets $H_i$ are pairwise disjoint then  $C := S_{H_1} S_{H_2} S_{H_3}$ is  a zero-corner with $\rho(C) \leq d + 3 -\min\{|H_1|, |H_2|, |H_3|\} \leq d + 1$. 
\item[(ii)] Otherwise, if e.g. $H_1 \cap H_2 \neq\emptyset$ then $C := S_{H_1 \cup  H_2}$	is a zero-corner with $\rho (C) \leq \max\{|H_1|,|H_2|,d + 2 - |H_1 \cap H_2|\} \leq d + 1$; indeed, $C=EFH$ 
with $E:=S_{H_1\cap H_2}$, $F:=S_{H_1\setminus H_2}$, $H:=S_{H_2\setminus H_1}$.  \qedhere
\end{itemize} 
\end{proof}

We turn now to a semidirect product $G=A\rtimes_{-1} Z_2$ where $A$ is a  non-trivial  abelian group and $Z_2 = \bra b \ket$ acting on it by inversion (in particular, when $A=Z_n$ is the cyclic group of order $n$, we obtain the dihedral group $D_{2n}$ of order $2n$). 
Keeping conventions, notations and terminology introduced in Sections~\ref{sec:prel} and \ref{sec:GA}, 
let $W$ be a  $G$-module over $\F$,  $I=\F[W]^A$, $R=\F[W]^G$ and $\tau:=\tau^G_A:I \to R$ is the relative transfer map. 

\begin{proposition}\label{prop:dihedraltype}
For any monomial  $m\in I$ and integer $k \ge 0$ it holds that 
 $m \in I_+R_+^k$ provided that
\begin{enumerate}
\item[(i)] $\deg(m) \geq k\D(A)+2$, or 
\item[(ii)] $\deg(m) \geq (k-1) \D(A)+d+2$ where $\Phi(m)$ contains  a zero-corner with diameter $d$ 
\end{enumerate}
\end{proposition}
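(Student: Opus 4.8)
The plan is to prove (i) and (ii) simultaneously by induction on $k$; both are trivial for $k=0$, since $I_+R_+^0=I_+\ni m$. So assume $k\ge 1$ and that the statement holds for $k-1$. Within the inductive step I would first reduce (i) to (ii). Suppose $\deg(m)\ge k\D(A)+2$. If $\Phi(m)$ has no zero entry, it is a zero-sum sequence of length $\ge\D(A)+2$, so a maximal zero-sum free subsequence of $\Phi(m)$ has length $\le\D(A)-1\le\deg(m)-3$, and Lemma~\ref{lemma:zerocorner} yields a zero-corner $C\subseteq\Phi(m)$ of diameter $d\le\D(A)$; since then $\deg(m)\ge k\D(A)+2\ge(k-1)\D(A)+d+2$, part (ii) applies to $m$. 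If $\Phi(m)$ has a zero entry, a short separate argument disposes of it: after changing coordinates in the weight-$0$ part of $W^*$ (a sum of trivial and sign $G$-modules, since $A$ acts trivially there and $\ch(\F)\ne 2$) so that each weight-$0$ variable lies in $R_1$ or in $\F[W]^{G,\chi}_1$ (with $\chi$ the sign character of $G/A$), one factors out a weight-$0$ variable from $R_1$, or a product of two such variables from $\F[W]^{G,\chi}$ (both of which lie in $R_+$), and applies the induction hypothesis; the only borderline case, $m=xm'$ with $x\in\F[W]^{G,\chi}_1$ to the first power and $\Phi(m')$ without zeros, is handled directly via $m-m^b=x\,\tau(m')\in I_+R_+$ together with $m+m^b=\tau(xu)\,w+\tau(xw)\,u^b\in R_+I_+$ (for a factorization $\Phi(m')=\Phi(u)\Phi(w)$, available as $\deg(m')>\D(A)$), giving $2m\in I_+R_+$; and for $k\ge 2$ one instead applies Lemma~\ref{lemma:zerocorner} to $\Phi(m')$ and lands in case (ii). Thus it is enough to prove (ii).

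For (ii): write $C=EFH$, $\Phi(m)=EFHD$ with $D$ the complementary subsequence, and choose monomials $e,f,h,d$ with $\Phi(e)=E$, $\Phi(f)=F$, $\Phi(h)=H$, $\Phi(d)=D$, so $m=efhd$. Since $EF$, $EH$ and $EFHD$ are zero-sum, $F$ and $H$ have equal weight-sum and $\theta(E)=\theta(D)=-\theta(F)$; hence the monomials $ef$, $eh$, $hd$, $fd$, $e^bd$, $f^bh$ all have zero weight-sum, i.e. lie in $I$. From $\tau(p)=p+p^b$ and $(pq)^b=p^bq^b$ one computes $\tau(ef)(hd)=m+e^bf^bhd$, $\tau(eh)(fd)=m+e^bfh^bd$ and $\tau(f^bh)(e^bd)=e^bf^bhd+e^bfh^bd$, whence
\[
2m=\tau(ef)(hd)-\tau(f^bh)(e^bd)+\tau(eh)(fd).
\]
Now $\tau(ef),\tau(eh),\tau(f^bh)\in R_+$, while $\deg(hd)=\deg(m)-|EF|\ge\deg(m)-d\ge(k-1)\D(A)+2$ and, likewise, $\deg(fd)\ge(k-1)\D(A)+2$ and $\deg(e^bd)\ge(k-1)\D(A)+2$. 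By the induction hypothesis (part (i) at $k-1$), each of the $A$-invariant monomials $hd$, $fd$, $e^bd$ lies in $I_+R_+^{k-1}$, so every summand above lies in $R_+\cdot I_+R_+^{k-1}=I_+R_+^k$; since $\ch(\F)\ne 2$ (as $2\mid|G|$), we conclude $m\in I_+R_+^k$.

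The main obstacle is discovering the displayed identity. Its point is that the zero-corner hypothesis is precisely what forces the three \emph{short} factors $ef$, $eh$, $f^bh$ to lie in $I$, so that when the three transfer products are formed, all the $b$-twisted monomials except the two copies of $m$ cancel in pairs, leaving $2m$. Everything else — the degree bookkeeping, the deduction of (i) from (ii), and the disposal of weight-$0$ variables — is routine once this identity is in hand.
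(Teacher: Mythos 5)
Your proof is correct and, at its core, identical to the paper's: the displayed three-term transfer identity is exactly the relation \eqref{masodik} used there (with your $d$ playing the role of the paper's $r$ and $\tau(f^bh)=\tau(fh^b)$), and the induction and degree bookkeeping match. The only divergence is in disposing of weight-zero variables in case (i): the paper notes that three of them form the zero-corner $(0,0,0)$ of diameter $2$ and, for $k=1$ with one or two of them, invokes $I_+^3\subseteq I_+R_+$ from Lemma~\ref{lemma:reduction}(i), whereas you diagonalize the $b$-action on the weight-zero part and treat one borderline case by an extra explicit identity --- both routes work.
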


\begin{proof} 
We  apply induction on $k$. 
The case $k=0$ is trivial so we may suppose $k \ge 1$. 
Assume condition (ii). Thus  $m=nr$ where the monomial  $n=efh$ is such that 
$ef$ and $eh$ are $A$-invariant monomials, and  
$ \max \{ \deg(ef),\deg(eh),\deg(fh) \} =d$. 
Denoting $\w(e)$ by $a \in \hat{A}$ we have $\w(f)=\w(h)=-a$ and $\w(r)=\w(e)=a$. 
The generator $b$ of $Z_2$ transforms each monomial of weight $a$ into a monomial of weight $-a$,
and vice versa, hence $fh^b$ and $e^br$ are both $A$-invariant. 
Given that $b^2=1$ the following relation holds: 
\begin{align} \label{masodik} 2m =  
\tau(ef) hr + 
\tau(eh) fr - 
\tau(fh^b) e^b r. 
\end{align}
After division by $2 \in \F^{\times}$ we get from \eqref{masodik}  that $m \in  I_{\geq \deg(m) - d} (R_+)_{\le d}$. 
Given that $\deg(m) - d \ge (k-1)\D(A) +2$ by assumption, the induction hypothesis applies, 
whence $I_{\geq \deg(m) - d} \subseteq R_+^{k-1}I_+$ and $m \in I_+R_+^k$ as claimed. 
Suppose next that condition (i) holds. 
If $m$ contains three $A$-invariant variables, then $\Phi(m)$ contains  the zero corner $(0,0,0)$ with diameter $2$,
 hence we are back in case (ii). 
Otherwise $\Phi(m)$ contains a subsequence of length at least $k\D(A)$ of non-zero elements. 
If $k>1$, then by Lemma~\ref{lemma:zerocorner} 
$\Phi(m)$ has a zero-corner of diameter at most $\D(A)$, so again we are back in case (ii). 
It remains that $k=1$. If $m$ contains one or two $A$-invariant variables, then $m\in I_+^3\subseteq I_+R_+$ by 
Lemma~\ref{lemma:reduction}. Otherwise $m$ contains a subsequence of length at least $\D(A)+2$ of non-zero elements, 
hence by Lemma~\ref{lemma:zerocorner} $\Phi(m)$ contains a zero-corner of diameter at most $\D(A)$. We are done by case (ii).  
\end{proof}

\begin{theorem} \label{thm:dihedraltype}
Let $G =A\rtimes_{-1} Z_2$ and suppose $|G| \in \F^{\times}$.  
Then 
\[ \D_k(A) +1 \le \beta_k(G) \leq k\D(A)+1 \]
\end{theorem}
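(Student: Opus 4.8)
\textbf{Proof plan for Theorem~\ref{thm:dihedraltype}.}

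The upper bound $\beta_k(G)\le k\D(A)+1$ follows almost directly from Proposition~\ref{prop:dihedraltype}. The plan is to show that for any $G$-module $W$, every monomial $m\in I=\F[W]^A$ of degree $\ge k\D(A)+2$ lies in $I_+R_+^k$, hence in $(R_+)^{k+1}$ after applying the transfer; but we must be a little careful because the generators of $R=\F[W]^G$ that we need are $G$-invariants, not merely $A$-invariants. Concretely, $\F[W]^G$ is spanned by $\tau(m)$ for monomials $m\in I$ together with the $A$-invariant monomials that are already $G$-invariant; since $\tau\colon I\to R$ is a graded $R$-module epimorphism, it suffices (via Proposition~\ref{trans}, or directly) to bound the degrees in which $I_+$ fails to lie in $(R_+)^k I_+$. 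Proposition~\ref{prop:dihedraltype}(i) says exactly that any monomial of degree $\ge k\D(A)+2$ lies in $I_+R_+^k$; applying $\tau$ and using that $\tau$ is $R$-linear gives $\beta_k(\F[W]^A_+,\F[W]^G)\le k\D(A)+1$, and then Proposition~\ref{trans} yields $\beta_k(G,W)\le k\D(A)+1$. Taking the supremum over $W$ gives the upper bound. The only subtlety is the base-degree bookkeeping: one should check that $\F[W]^G$ really is generated in degree $\le k\D(A)+1$ as an $(\F[W]^G_+)^{k}$-module, which is what $\beta_k$ measures, and this is immediate once the monomial statement is in hand since the spanning monomials of $I$ of large degree are pushed into $I_+R_+^k$ and then transferred.

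The lower bound $\beta_k(G)\ge \D_k(A)+1$ is the more interesting half, and the natural tool is Theorem~\ref{thm:lower} applied with the normal subgroup $H=A$, whose quotient $G/A\cong Z_2$ is abelian with $\D(Z_2)=2$. That theorem gives $\beta_k(G)\ge \beta_k(A)+\D(G/A)-1=\beta_k(A)+1$. Now invoke the identification recalled at the end of Section~\ref{sec:prel}: for the abelian group $A$ one has $\beta_k(A)=\D_k(A)$, the $k$th generalized Davenport constant. Substituting gives $\beta_k(G)\ge \D_k(A)+1$ exactly, which is the claimed inequality. So the lower bound is essentially a one-line consequence of the already-proved Theorem~\ref{thm:lower} together with the Schmid/Halter-Koch dictionary $\beta_k(A)=\D_k(A)$ for abelian $A$.

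The step I expect to require the most care is verifying that the upper-bound argument correctly accounts for the passage from $A$-invariants to $G$-invariants and that no off-by-one error creeps in. Proposition~\ref{prop:dihedraltype} is phrased for monomials $m\in I$ and concludes $m\in I_+R_+^k$; to conclude a statement about $\beta_k(\F[V]^G)$ one writes a homogeneous $G$-invariant of degree $d\ge k\D(A)+2$ as a linear combination of $\tau(m)$ and $G$-invariant monomials, pushes the high-degree monomial pieces into $I_+R_+^k$, and applies $\tau$; since $\tau(I_+R_+^k)\subseteq R_+\cdot R_+^k=(R_+)^{k+1}$ and $\tau$ is surjective, one gets $R_d\subseteq (R_+)^{k+1}$ for all $d\ge k\D(A)+2$, i.e.\ $\beta_k(G,V)\le k\D(A)+1$. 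Everything else is bookkeeping, and the two bounds together give the stated double inequality $\D_k(A)+1\le\beta_k(G)\le k\D(A)+1$.
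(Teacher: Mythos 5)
Your proposal matches the paper's proof essentially step for step: the upper bound is Proposition~\ref{prop:dihedraltype}(i) combined with the relative transfer (Proposition~\ref{trans}), and the lower bound is Theorem~\ref{thm:lower} applied to the normal subgroup $A$ with $\D(Z_2)=2$ together with the identification $\beta_k(A)=\D_k(A)$. The extra bookkeeping you describe for the passage from $A$-invariants to $G$-invariants is exactly what Proposition~\ref{trans} already encapsulates, so the argument is correct and identical in substance to the paper's.
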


\begin{proof} By Proposition~\ref{trans} we have $\beta_k(G,W) \le \beta_k(I_+,R)$. 
Since
$I_d \subseteq I_+R_+^{k}$ for $d \ge  k\D(A)+2 $ by Proposition~\ref{prop:dihedraltype}, it follows that
$\beta_k(I_+,R) \le k\D(A)+1$.  
The lower bound is given by Theorem~\ref{thm:lower}. 
\end{proof}

If $k=1$ then $\D_1(A) = \D(A)$ and if $A=Z_n$ is cyclic then  $\D_k(Z_n) = k\D(Z_n)$, hence we obtain the following immediate consequences: 

\begin{corollary} For any abelian group $A$ we have $\beta(A \rtimes_{-1} Z_2) = \D(A) +1$.\end{corollary}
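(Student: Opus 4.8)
The plan is to derive the corollary directly from Theorem~\ref{thm:dihedraltype} by specializing to $k=1$ and checking that the upper and lower bounds coincide in that case. First I would observe that by definition $\D_1(A)$ is the maximal length of a zero-sum sequence over $A$ that does not factor as a product of two non-empty zero-sum sequences, which is exactly the maximal length of an irreducible zero-sum sequence, i.e. $\D_1(A)=\D(A)$, the ordinary Davenport constant. With this identification the two bounds of Theorem~\ref{thm:dihedraltype} both become $\D(A)+1$, so $\beta_1(G)=\beta(G)=\D(A)+1$ where $G=A\rtimes_{-1}Z_2$.

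The only hypothesis of Theorem~\ref{thm:dihedraltype} that needs a word of comment is the condition $|G|\in\F^\times$; since $G=A\rtimes_{-1}Z_2$ has order $2|A|$, this is precisely the running assumption $\ch(\F)\nmid|G|$ already in force throughout the paper (and in particular $2\in\F^\times$, which is what made the division in \eqref{masodik} legitimate). So no extra work is required there.

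The step I expect to need the most care — though it is still routine — is making the passage $\D_1(A)=\D(A)$ fully explicit, since the definition of $\D_k(A)$ given in Section~\ref{sec:prel} speaks of sequences not factoring into $k+1$ non-empty zero-sum sequences. For $k=1$ this says ``not a product of two non-empty zero-sum sequences,'' which is the negation of reducibility; hence the $\D_1$-extremal sequences are exactly the longest irreducible zero-sum sequences, and their common length is $\D(A)$ by the classical definition recalled at the end of Section~\ref{sec:prel}. One should also recall the remark from Section~\ref{sec:prel} that $\beta_1$ is just the ordinary Noether number $\beta$, so that the statement of the corollary indeed follows. Assembling these observations, the corollary is immediate.

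\begin{proof}
Apply Theorem~\ref{thm:dihedraltype} with $k=1$ to $G=A\rtimes_{-1}Z_2$. The hypothesis $|G|\in\F^\times$ holds since $|G|=2|A|$ and $\ch(\F)\nmid|G|$ by our standing assumption. A zero-sum sequence fails to factor into two non-empty zero-sum sequences precisely when it is irreducible, so $\D_1(A)$ equals the maximal length of an irreducible zero-sum sequence over $A$, that is, $\D_1(A)=\D(A)$. Hence both the lower bound $\D_1(A)+1$ and the upper bound $1\cdot\D(A)+1$ in Theorem~\ref{thm:dihedraltype} equal $\D(A)+1$, giving $\beta_1(G)=\D(A)+1$. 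Since $\beta_1(G)=\beta(G)$, this is the assertion.
\end{proof}
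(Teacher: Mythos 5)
Your proof is correct and takes exactly the route the paper intends: specialize Theorem~\ref{thm:dihedraltype} to $k=1$ and use $\D_1(A)=\D(A)$ to see that the upper and lower bounds coincide. The paper states this in a single sentence before the corollary, so your write-up is just a more explicit version of the same argument.
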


\begin{corollary}\label{cor:dihedralbeta} 
For the dihedral group $D_{2n}$ of order $2n$ and an arbitrary positive integer $k$ we have 
$\beta_k(D_{2n})=nk+1$, provided that $2n \in \F^{\times }$. 
\end{corollary}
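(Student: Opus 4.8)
The plan is to derive the final corollary purely as a specialization of Theorem~\ref{thm:dihedraltype}. The dihedral group $D_{2n}$ is by definition the semidirect product $Z_n\rtimes_{-1}Z_2$, so with $A=Z_n$ the hypothesis $|G|\in\F^\times$ of Theorem~\ref{thm:dihedraltype} is exactly the stated assumption $2n\in\F^\times$ (since $\ch(\F)\nmid|G|=2n$). Thus Theorem~\ref{thm:dihedraltype} immediately gives the sandwich
\[\D_k(Z_n)+1\le\beta_k(D_{2n})\le k\D(Z_n)+1.\]

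The only remaining task is to check that the two ends of this inequality coincide, i.e.\ that $\D_k(Z_n)=k\D(Z_n)=kn$. First I would recall that $\D(Z_n)=n$: the sequence $(1^{n})$ (the element $1\in Z_n$ repeated $n$ times) is zero-sum but has no proper nonempty zero-sum subsequence, and no longer irreducible zero-sum sequence can exist since any zero-sum sequence over $Z_n$ of length $\ge n$ contains a proper zero-sum subsequence by the pigeonhole principle on partial sums. For the generalized Davenport constant, the upper bound $\D_k(Z_n)\le k\D(Z_n)=kn$ holds for every abelian group (it is the abelian instance of $\beta_k\le k\beta$, or can be seen directly: a zero-sum sequence of length $>kn$ can be split off $k$ times). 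For the matching lower bound, the sequence $(1^{kn})$ over $Z_n$ has length $kn$ and cannot be written as a product of $k+1$ nonempty zero-sum subsequences, because every nonempty zero-sum subsequence of $(1^{kn})$ has length a positive multiple of $n$, so $k+1$ of them would force length $\ge(k+1)n>kn$. Hence $\D_k(Z_n)=kn$, and the two bounds meet.

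Combining, $\beta_k(D_{2n})=kn+1$, which is the assertion. I would present the argument as: invoke Theorem~\ref{thm:dihedraltype} with $A=Z_n$, then insert the computation $\D_k(Z_n)=kn$ (citing the identification $\beta_k(A)=\D_k(A)$ from Section~\ref{sec:prel} together with the elementary count above, or simply referencing \cite{halter-koch} where $\D_k$ was introduced). I do not anticipate a genuine obstacle here — the corollary is essentially a restatement — but the one point that deserves a sentence rather than being left implicit is the equality $\D_k(Z_n)=k\D(Z_n)$, since for non-cyclic abelian groups $\D_k$ is strictly smaller than $k\D$ in general; the cyclic case is special precisely because the extremal sequence $(1^n)$ is "rigid" and its powers remain extremal.
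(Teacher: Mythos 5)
Your proposal is correct and follows exactly the paper's route: the corollary is obtained by specializing Theorem~\ref{thm:dihedraltype} to $A=Z_n$ and noting that $\D_k(Z_n)=k\D(Z_n)=kn$, which makes the lower and upper bounds coincide. The only difference is that you spell out the elementary verification of $\D_k(Z_n)=kn$, which the paper states without proof; your justification of it is sound.
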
 

The special case $k=1$ of Corollary~\ref{cor:dihedralbeta} is due to Schmid \cite{schmid} when ${\mathrm{char}}(\F)=0$ and to Sezer \cite{sezer} in non-modular positive characteristic. 

\section{Extremal invariants}\label{sec:extremal} 
 
Let $A$ be an abelian normal subgroup in a finite group $G$, and assume the conditions and conventions from the beginning of Section~\ref{sec:GA}.  
\begin{definition} \label{def:extremal}
Let $R= \F[V]^{G}$;  a monomial 
$u \in \F[V]^A$ will be called  \emph{$k$-extremal with respect to $\tau_A^G$} 
if  $\deg(u)= \beta_k(G)$ while $\tau_A^G(u) \not\in R_+^{k+1}$. 
A sequence $S$ over $\hat A$ is  $k$-extremal if there is a $G$-module $V$ and a monomial $m \in \F[V]^A$ 
with  $\Phi(m) =S$ such that 
 $m$ is $k$-extremal with respect to $\tau^G_A$. 
\end{definition}

For any sequence $S = (s_1, ..., s_d)$ over an abelian group $A$ the set of its partial sums is 
\[\Sigma(S) := \{ \sum_{i\in I} s_i: I  \subseteq \{1,...,d \} \}.\]

\begin{lemma} \label{lemma:easy} 
Let $p$ be a prime and $S=(s_1,...,s_d)$ a sequence of non-zero elements of $Z_p$. 
Then $|\Sigma(S)| \ge \min \{ p, d+1 \}$. 
\end{lemma}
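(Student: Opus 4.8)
The plan is to prove this by induction on $d$, which is the classical Cauchy--Davenport style argument restricted to the subgroup structure of $Z_p$. For $d=1$ the sequence is $(s_1)$ with $s_1\neq 0$, so $\Sigma(S)=\{0,s_1\}$ has $2=\min\{p,2\}$ elements (using $p\geq 2$). For the inductive step, write $S=S'(s_d)$ where $S'=(s_1,\dots,s_{d-1})$, so that $\Sigma(S)=\Sigma(S')\cup(\Sigma(S')+s_d)$. By the induction hypothesis $|\Sigma(S')|\geq\min\{p,d\}$. If $|\Sigma(S')|=p$ we are already done, so assume $|\Sigma(S')|\leq d<p$ and in particular $\Sigma(S')\subsetneq Z_p$.

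First I would observe that $\Sigma(S')$ contains $0$, hence is non-empty, and that $\Sigma(S')=\Sigma(S')+s_d$ would force $\Sigma(S')$ to be a union of cosets of the subgroup generated by $s_d$; but $s_d\neq 0$ and $p$ is prime, so $\langle s_d\rangle=Z_p$, forcing $\Sigma(S')=Z_p$, contradicting $\Sigma(S')\subsetneq Z_p$. Therefore $\Sigma(S')\neq\Sigma(S')+s_d$, which means there is at least one element in $(\Sigma(S')+s_d)\setminus\Sigma(S')$, giving
\[|\Sigma(S)|=|\Sigma(S')\cup(\Sigma(S')+s_d)|\geq|\Sigma(S')|+1\geq\min\{p,d\}+1\geq\min\{p,d+1\}.\]
This completes the induction.

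The only mild subtlety — and the step I would flag as the place to be careful — is the case analysis at the junction: one must ensure that when $|\Sigma(S')|$ already equals $p$ the claimed bound holds trivially, and otherwise that the strict containment $\Sigma(S')\subsetneq Z_p$ is genuinely available so that the coset/stabilizer argument for $\langle s_d\rangle=Z_p$ kicks in. Since $p$ is prime and each $s_i$ is non-zero, every $s_i$ generates $Z_p$, which is exactly what makes the "add one element at a time" increment work without loss; no deeper additive-combinatorics input (e.g. Kneser's theorem) is needed here because of the primality of $p$. Thus the argument is short and self-contained, and the main obstacle is merely bookkeeping the $\min$ in the two regimes $d+1\leq p$ and $d+1>p$.
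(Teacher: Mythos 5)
Your proof is correct, but it is more self-contained than the paper's: the paper simply invokes the Cauchy--Davenport theorem, $|C+D|\geq\min\{p,|C|+|D|-1\}$, the point being that $\Sigma(S)=\{0,s_1\}+\cdots+\{0,s_d\}$ so that iterating Cauchy--Davenport with two-element summands immediately gives $\min\{p,d+1\}$. You run the same induction on $d$ but re-prove the needed two-element case of Cauchy--Davenport from scratch via the stabilizer argument: $\Sigma(S')+s_d=\Sigma(S')$ would make the non-empty set $\Sigma(S')$ a union of cosets of $\langle s_d\rangle=Z_p$, forcing $\Sigma(S')=Z_p$. This is exactly why the lemma is ``easy'' for prime $p$, and your version has the advantage of needing no external input, at the cost of a few lines; the paper's citation is shorter and signals that the statement is standard. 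One tiny imprecision: after disposing of the case $|\Sigma(S')|=p$, the remaining case is $|\Sigma(S')|<p$ (the induction hypothesis gives a lower bound, not the upper bound $|\Sigma(S')|\le d$ that you write), but all your argument actually uses is $\Sigma(S')\subsetneq Z_p$ together with $0\in\Sigma(S')$, both of which hold, so the proof stands. Note also that your base case correctly relies on the paper's convention that $\Sigma(S)$ includes the empty subset sum $0$.
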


\begin{proof} This is a well known and easy consequence of the Cauchy-Davenport Theorem, asserting that 
$|C+D|\geq \min\{p,|C|+|D|-1\}$
for any non-empty subsets $C,D$ in $Z_p$, where $p$ is a prime. 
\end{proof}

\begin{lemma}\label{lemma:freeze-smith} (Freeze -- Smith \cite{freeze_smith}) 
For any zero-sum free sequence $S$ over $Z_n$ of length $d$ and maximal multiplicity $h=h(S)$ it holds that 
\[ |\Sigma(S)| \ge 2d-h+1.\] 
\end{lemma}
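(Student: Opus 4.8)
The plan is to argue by induction on the length $d=|S|$. The base cases are immediate: if $d=1$, or more generally if $S=(a^d)$ consists of a single repeated element $a$, then zero-sum freeness forbids any vanishing partial multiple $ja$ with $1\le j\le d$, so $\Sigma(S)=\{0,a,\dots,da\}$ has exactly $d+1=2d-d+1$ elements. For the inductive step the crucial building block is the following elementary observation: if $T$ is zero-sum free over $Z_n$, $x\in Z_n\setminus\{0\}$, and $Tx$ is still zero-sum free, then $|\Sigma(Tx)|\ge |\Sigma(T)|+1$. Indeed $\Sigma(Tx)=\Sigma(T)\cup(\Sigma(T)+x)$, so if the bound failed we would have $\Sigma(T)+x=\Sigma(T)$; then $\Sigma(T)$ is a union of cosets of $\langle x\rangle$ and, containing $0$, it contains $\langle x\rangle$, hence $-x$; a subsequence of $T$ summing to $-x$ together with the appended copy of $x$ is then a non-empty zero-sum subsequence of $Tx$, a contradiction.

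Now let $S$ be zero-sum free with $d>h=h(S)$, pick an element $a$ of multiplicity $h$, and set $S':=S$ with one copy of $a$ deleted, so $|S'|=d-1$ and $S'$ is zero-sum free. If $a$ is the \emph{unique} element of maximal multiplicity, then $h(S')=h-1$, and the induction hypothesis together with the building block gives
\[ |\Sigma(S)|\ \ge\ |\Sigma(S')|+1\ \ge\ \bigl(2(d-1)-(h-1)+1\bigr)+1\ =\ 2d-h+1, \]
as desired.

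The main obstacle is the remaining case, in which at least two distinct elements attain the maximal multiplicity $h$ (so $d\ge 2h$). Here the naive gain of $+1$ from removing a single copy of $a$ is not enough: for instance over $Z_8$ the sequence $S=(1,1,2,2)$ is zero-sum free with $\Sigma(S)=\{0,1,\dots,6\}$, so $|\Sigma(S)|=7=2\cdot 4-2+1$ is attained with equality, yet deleting one copy of $1$ yields $S'=(1,2,2)$ with $|\Sigma(S')|=6$, so the passage $S'\to S$ gains only $1$ while the crude induction hypothesis guarantees only $|\Sigma(S')|\ge 5$. Thus one must either carry a strengthened inductive hypothesis that also records the extremal configurations — such as the sequences similar to $(a^{m_1})((2a)^{m_2})$ with $m_1\ge m_2$, for which $\Sigma$ is the full interval $\{0,a,\dots,(m_1+2m_2)a\}$ — and propagate this structural information through the induction, or else invoke the structure theory of long zero-sum free sequences over cyclic groups in the spirit of Savchev--Chen. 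Note that the purely additive route via Kneser's theorem applied to the sumset decomposition $\Sigma(S)=\sum_i\{0,a_i,\dots,m_ia_i\}$ only recovers the weak bound $|\Sigma(S)|\ge d+1$ when the stabilizer is trivial, so the arithmetic-progression structure of the summands must be exploited. This is exactly the analysis carried out in \cite{freeze_smith}.
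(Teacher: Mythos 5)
The paper does not actually prove this lemma: it is quoted as a black box from Freeze and Smith \cite{freeze_smith}, so there is no internal argument to compare yours against, and the only question is whether your proposal stands on its own. It does not, and you say so yourself. Your base case is fine, your building block is correct (if $T$ and $Tx$ are zero-sum free with $x\neq 0$ then $|\Sigma(Tx)|\ge|\Sigma(T)|+1$, since otherwise $\Sigma(T)+x=\Sigma(T)$ forces $-x\in\Sigma(T)$ and hence a zero-sum subsequence of $Tx$), and the inductive step closes cleanly when the element of maximal multiplicity is unique, because then $h$ drops by one together with $d$ and the ledger balances.

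The genuine gap is the case where at least two distinct elements attain the maximal multiplicity $h$. There the induction hypothesis for $S'$ gives only $2(d-1)-h+1=2d-h-1$, the building block adds only $1$, and you land one short of the claimed bound. Your own example $S=(1,1,2,2)$ over $Z_8$ shows this is not a bookkeeping artifact: the target $2d-h+1=7$ is attained with equality while the chain through $S'=(1,2,2)$ can certify at most $6$, so no choice of which letter to delete rescues the naive induction. The two repairs you name --- a strengthened induction hypothesis that propagates the structure of the extremal sequences, or the Savchev--Chen structure theory for long zero-sum free sequences --- are exactly where the content of the lemma lives (it is what lifts the trivial bound $d+1$ to $2d-h+1$), and neither is carried out; the argument ends by deferring that case back to \cite{freeze_smith}. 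As written, the proposal is an honest reduction of the problem to its hard case, not a proof. Given that the paper itself treats the lemma as an external citation, that may be an acceptable resolution editorially, but it should not be presented as a self-contained proof.
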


\begin{proposition} \label{extrem_Dn}
Let $G= A\rtimes_{-1} Z_2=D_{2n}$ be the dihedral group of order $2n$ where  $n\ge 3$. A sequence  over $\hat A \cong Z_n$ 
is  $k$-extremal with respect to $\tau_A^G$ only if it has the form $ (0,a^{kn})$ for some generator $a$ of $\hat A$. 
\end{proposition}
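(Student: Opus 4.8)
The plan is to start from a $k$-extremal monomial $m\in\F[V]^A$ with $\deg(m)=\beta_k(D_{2n})$ and $\tau(m)\notin R_+^{k+1}$, and show that its weight sequence $S=\Phi(m)$ must be of the stated shape. By Corollary~\ref{cor:dihedralbeta} we know $\deg(m)=\beta_k(D_{2n})=kn+1$, so $S$ is a zero-sum sequence over $\hat A\cong Z_n$ of length $kn+1$. First I would invoke Proposition~\ref{prop:dihedraltype}: since $m\notin I_+R_+^k$ (otherwise $\tau(m)\in\tau(I_+)R_+^k\subseteq R_+^{k+1}$, as $\tau$ is an $R$-module map), neither condition (i) nor (ii) of that proposition can hold for $m$ with this value of $k$. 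Condition (i) fails automatically here since $\deg(m)=k\D(A)+1=kn+1< k\D(A)+2$, so the real content is that condition (ii) must fail: \emph{$\Phi(m)$ contains no zero-corner of diameter $\le D(A)+1=n+1$}, because $\deg(m)=kn+1=(k-1)\D(A)+(n+1)+1$. This is the key leverage point.

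Next I would analyze what "no small zero-corner" forces. If $m$ contained three $A$-invariant variables, $\Phi(m)$ would contain the zero-corner $(0,0,0)$ of diameter $2\le n+1$ (here one uses $n\ge3$), contradiction; so $m$ has at most two variables of weight $0$. Running the argument from the proof of Proposition~\ref{prop:dihedraltype}: after removing the $\le 2$ zero weights, the remaining subsequence $S'$ of non-zero elements has length $\ge kn-1$. If $S'$ had a maximal zero-sum free subsequence of length $d\le |S'|-3$, then Lemma~\ref{lemma:zerocorner} would produce a zero-corner of diameter $\le d+1\le \D(A)+1=n+1$ (using $d\le\D(Z_n)-1=n-1$), again a contradiction. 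Hence every maximal zero-sum free subsequence of $S'$ has length $\ge |S'|-2\ge kn-3$, which is enormous compared to $\D(Z_n)-1=n-1$ when $k\ge 2$ — and in fact a zero-sum free sequence over $Z_n$ has length at most $n-1$, so this is only possible if $S'$ itself is "almost all" a single repeated element. More precisely, I expect the argument to show $S'=(a^{kn-1})$ or $(a^{kn-2}b)$ etc., and then a short case check — using that the whole sequence $S=\Phi(m)$ is zero-sum of length $kn+1$ — pins down that $S'=(a^{kn})$ with $a$ a \emph{generator} (if $a$ had order $< n$, then $(a^{\mathrm{ord}(a)})$ is a proper zero-sum subsequence and $m$ fails to be a minimal generator, or one can extract a short zero-corner), and exactly one variable of weight $0$ remains to make up length $kn+1$ and weight $0$. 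Combining, $S=(0,a^{kn})$.

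The main obstacle I anticipate is the bookkeeping in the borderline cases: handling the one or two variables of weight $0$ cleanly, and ruling out configurations like $S'=(a^{kn-1}c)$ with $c\neq a$, or $S'$ supported on two cosets, where one has to manufacture a zero-corner of diameter $\le n+1$ by hand rather than citing Lemma~\ref{lemma:zerocorner} directly (the lemma needs a length-$3$ gap above a maximal zero-sum free subsequence, which is exactly what one loses in these edge cases). Here I would lean on the structure of zero-sum free sequences over cyclic groups — e.g. the Freeze--Smith bound (Lemma~\ref{lemma:freeze-smith}) or the classical fact that a zero-sum free sequence over $Z_n$ of length near $n-1$ is similar to $(a^{n-1})$ — to show that once a maximal zero-sum free subsequence is forced to be long, it must be (similar to) a power of a single generator, and then the divisibility/weight constraints on the zero-corner diameter close the remaining gap. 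A secondary subtlety is the case $k=1$, where $\deg(m)=n+1$ and the slack is smallest; there Lemma~\ref{lemma:reduction}(i) (i.e.\ $I_+^3\subseteq I_+R_+$) handles monomials with one or two $A$-invariant variables, exactly as in the last lines of the proof of Proposition~\ref{prop:dihedraltype}, and the rest proceeds as above.
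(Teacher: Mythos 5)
Your overall strategy (exclude small zero-corners via Proposition~\ref{prop:dihedraltype}(ii), then use Lemma~\ref{lemma:zerocorner} and the structure of zero-sum free sequences over $Z_n$ to force $\Phi(m)$ to be nearly constant) is the same as the paper's, but the pivotal numerical step contains an off-by-two error that breaks the main line of the argument. With $\deg(m)=kn+1$, condition (ii) of Proposition~\ref{prop:dihedraltype} reads $kn+1\ge (k-1)n+d+2$, i.e.\ it applies only to zero-corners of diameter $d\le n-1$; your claim that it excludes all zero-corners of diameter up to $n+1$ rests on the false identity $kn+1=(k-1)\D(A)+(n+1)+1$ (the right-hand side equals $kn+2$). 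Consequently, feeding the trivial bound $d\le \D(Z_n)-1=n-1$ into Lemma~\ref{lemma:zerocorner} only yields a zero-corner of diameter at most $n$, which is \emph{not} excluded, and no contradiction results. For $k\ge 2$ your argument then gives no structural information about $S'$ at all; note also that your intermediate assertion that ``every maximal zero-sum free subsequence of $S'$ has length $\ge kn-3$'' is vacuously impossible for $k\ge 2$ (such subsequences have length at most $n-1<kn-3$), rather than forcing $S'$ to be almost constant.

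To close this gap one needs $d\le n-2$, and this is exactly where the paper's proof departs from your plan in an essential way: it first rules out zero-corners of the shape $(w,w,-w)$ (diameter $2$), which guarantees that if the nonzero part of $\Phi(m)$ takes at least two distinct values then it contains a zero-sum free pair of \emph{distinct} elements; the Freeze--Smith bound (Lemma~\ref{lemma:freeze-smith}) then shows that any maximal zero-sum free subsequence containing such a pair has length at most $n-2$, so Lemma~\ref{lemma:zerocorner} produces a zero-corner of diameter at most $n-1$ and Proposition~\ref{prop:dihedraltype}(ii) applies with equality $kn+1=(k-1)n+(n-1)+2$. So Freeze--Smith is not a fallback for borderline configurations, as you suggest, but the engine of the main step. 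A second, smaller issue: the zero-corner $(0,0,0)$ only bounds the multiplicity of $0$ by two, whereas the conclusion requires multiplicity exactly one; the paper obtains multiplicity at most one by a different mechanism (a product of two weight-zero variables puts $m$ into $R_+I_{\ge kn-1}\subseteq I_+R_+^k$ via Lemma~\ref{lemma:reduction} and $kn-1>\beta_{k-1}(D_{2n})$), and you would need some such argument, together with the divisibility considerations you sketch, to finish.
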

\begin{proof} 
Let $m \in \F[W]^A$ be a monomial of $\deg(m)=\beta_k(D_{2n})=kn+1$ such that $\tau_A^G(m) \not\in R_+^{k+1}$. 
If $m$ is divisible by the product of two weight zero variables, then $m\in R_+I_{\ge kn-1}$ by Lemma~\ref{lemma:reduction}. 
Since $kn-1>\beta_{k-1}(D_{2n})$, we get  $\tau^G_A(m)\in R_+\tau^G_A(I_{>\beta_{k-1}(D_{2n})})\subseteq R_+^{k+1}$, a contradiction.
It remains that the multiplicity of $0$ in $\Phi(m)$ is at most one.  
Let $H \subseteq Z_n $ be the set of nonzero values occurring in $\Phi(m)$.
Suppose  $|H| \ge 2$; if $\Phi(m)$ contains a zero-corner of the form $(w,w,-w)$  with diameter $2$, then
 $\tau(m) \in R_+^{k+1}$ by Proposition~\ref{prop:dihedraltype} (ii), a contradiction. 
 We are done if $n=3$, so assume for the rest that $n\ge 4$. 
 Then $\Phi(m)$ contains a  zero-sum free subsequence of length $2$, consisting of two distinct elements. 
 By Lemma~\ref{lemma:freeze-smith} this extends to a maximal zero-sum free subsequence of length at most $n-2$. 
If $k>1$ or $0\notin \Phi(m)$, then 
$\tau(m) \in R_+^{k+1}$ by Lemma~\ref{lemma:zerocorner} and Proposition~\ref{prop:dihedraltype}, a contradiction. 
If $k=1$ and $0\in \Phi(m)$, then $m\in I_+^3$, hence $\tau(m)\in R_+^2$ by Lemma~\ref{lemma:reduction}, 
a contradiction again. 
Consequently $|H|=1$ and  $\Phi(m)= (0, a^{kn})$. Taking into account Lemma~\ref{lemma:reduction},  
$a$ must have order $n$, whence our  claim.
\end{proof}


\section{A result on zero-sum sequences} \label{sec:zero-sum}

Let $e$ be a generator of the cyclic group $Z_n$; 
for an arbitrary element $a \in Z_n$, the smallest positive integer $r$ such that $a =re$ is denoted by $|| a ||_e$. 
For any sequence $S= (a_1,...,a_l)$ over $Z_n$ we set $||S||_e := ||a_1||_e + ... + ||a_l ||_e$.

The following two statements  are based on an intermediary step in the proof of the Savchev -- Chen Theorem (see Proposition 2. in \cite{savchev}):

\begin{proposition}\label{savchev-chen zerosumfree}
Let $S_1 \subset S_2 \subset ... \subset S_t$ be zero-sum free sequences 
over the cyclic group $Z_n$ such that $|S_i| = i$ for  all $ i=1,...,t$ and
\begin{align} \label{sav chen lemma}
|\Sigma(S_{i+1})| \ge |\Sigma(S_i)| +2 \qquad \text{ for all } i \le t-1
\end{align}
If moreover $S_t(b)$ is also zero-sum free for some $b \in Z_n$ and  $|\Sigma(S_t(b))| = |\Sigma(S_t)|+1$, 
then $b$ is the unique element with these two properties.
\end{proposition}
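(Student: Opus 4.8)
The plan is to analyze the structure that a zero-sum free sequence over $Z_n$ must have when its partial sum set grows as slowly as possible, and to extract from this a rigidity statement forcing uniqueness of the extending element $b$. The hypothesis chain $S_1\subset\cdots\subset S_t$ with $|\Sigma(S_{i+1})|\ge|\Sigma(S_i)|+2$ at each step is a ``slow growth'' condition: since adding one element to a zero-sum free sequence can increase $|\Sigma|$ by at least the multiplicity-weighted amount, the minimal possible increment is exactly $2$ per step (the element itself together with one new shifted sum), and this extremal behavior is precisely what the intermediary step in the Savchev--Chen argument (Proposition 2 of \cite{savchev}) controls. I would first invoke that result to conclude that, after applying a suitable automorphism of $Z_n$, the sequence $S_t$ is of the form $(e^{k_1},(2e)^{k_2},\ldots)$ — more precisely that $S_t$ sits inside an interval $\{e,2e,\ldots,me\}$ with $\|S_t\|_e = |\Sigma(S_t)|$, i.e. the partial sums of $S_t$ are exactly the initial segment $\{e,2e,\ldots,\|S_t\|_e\}$ of multiples of $e$, where $m = |S_t|$ and $\|S_t\|_e < n$.

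Granting that normal form, the computation of $\Sigma(S_t(b))$ becomes transparent: writing $b = \|b\|_e\, e =: qe$ with $1\le q \le n-1$, the partial sums of $S_t(b)$ are $\Sigma(S_t)\cup(q+\Sigma(S_t))\cup\{qe\}$, a union of (at most) two translated initial intervals of multiples of $e$ together with the singleton $qe$. The condition $|\Sigma(S_t(b))| = |\Sigma(S_t)|+1$ then says that this union has exactly one more element than $\Sigma(S_t) = \{e,\ldots,\|S_t\|_e\}$. Since $0\notin\Sigma(S_t)$ (zero-sum freeness) but the translate $q+\Sigma(S_t)$ contains $qe+e,\ldots$, the only way to gain just a single new element is for $q+\Sigma(S_t)$ to overlap $\Sigma(S_t)$ in all but one point and for $qe$ itself to already lie in $\Sigma(S_t)\cup(q+\Sigma(S_t))$; tracking the endpoints of the two intervals $\{e,\ldots,\|S_t\|_e e\}$ and $\{(q+1)e,\ldots,(q+\|S_t\|_e)e\}$ modulo $n$ and using $\|S_t\|_e<n$, this pins down $q$ uniquely (it is forced to be $q=\|S_t\|_e$, so that $b$ is the ``next'' multiple $(\|S_t\|_e+1-\|S_t\|_e)$... — in any case the endpoint-matching leaves no freedom). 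Hence $b$ is determined, which is the claim.

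The main obstacle I anticipate is not the endpoint bookkeeping but making rigorous the passage to normal form: I must be careful that the cited intermediary step of Savchev--Chen genuinely delivers the interval structure for $S_t$ under the stated hypotheses, and in particular that the growth condition \eqref{sav chen lemma} — which is assumed for the chain $S_1\subset\cdots\subset S_t$ — is exactly the hypothesis under which that structural conclusion holds, rather than something slightly weaker or stronger. A secondary subtlety is that the conclusion is about uniqueness of $b$, so I should phrase the endpoint analysis as: \emph{any} $b$ with the two listed properties must have $\|b\|_e$ equal to one specific value, which requires checking that the wrap-around case (where $q+\|S_t\|_e \ge n$) cannot produce a spurious second solution — here $\|S_t\|_e < n$ and the fact that $\Sigma(S_t)$ avoids $0$ should close the gap. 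Once the normal form is in hand, everything else is a short, essentially one-dimensional, interval computation.
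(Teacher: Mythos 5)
The paper itself gives no proof of this proposition: it is imported verbatim as ``an intermediary step in the proof of the Savchev--Chen Theorem'' and attributed to Proposition~2 of \cite{savchev}, so the only internal check available is against the statement itself. Measured against that, your argument fails at its very first step. You claim that the hypotheses force $S_t$ into the Savchev--Chen normal form, i.e.\ that after an automorphism $\Sigma(S_t)$ is the initial interval $\{e,2e,\dots,\|S_t\|_e e\}$ with $\|S_t\|_e=|\Sigma(S_t)|$. That structural conclusion holds for zero-sum free sequences of length greater than $n/2$, and it is false under the hypotheses here; moreover you have read the growth condition \eqref{sav chen lemma} backwards. The minimal possible increment of $|\Sigma|$ when a zero-sum free sequence is extended by one term is $1$, not $2$ (the new sum $b+\theta(S)$ is always new, and nothing more need be), so \eqref{sav chen lemma} is a \emph{fast}-growth hypothesis which excludes precisely the interval-structured sequences: for $S_t=(e^t)$ with $t\ge 2$ every step of the chain increases $|\Sigma|$ by exactly $1$, so \eqref{sav chen lemma} fails. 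Conversely, $S_1=(1)\subset S_2=(1,10)$ over $Z_{100}$ satisfies all the hypotheses (the increments are $1$ and then $2$), yet $\Sigma(S_2)=\{1,10,11\}$ is not an interval of multiples of any generator and $\|S_2\|_1=11\neq 3=|\Sigma(S_2)|$. Since \eqref{sav chen lemma} gives $|\Sigma(S_t)|\ge 2t-1$ while $\Sigma(S_t)\subseteq Z_n\setminus\{0\}$, one always has $t\le n/2$, so the Savchev--Chen structure theorem is never applicable to $S_t$ in this setting. With the normal form gone, the interval/endpoint computation in your second paragraph has nothing to stand on (and, as a symptom, even in the genuine interval case $S=(e^k)$ the unique admissible $b$ is $e$ itself, not the ``next multiple'' your endpoint matching suggests).

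A correct argument must start from the hypothesis on $b$ rather than from a normal form for $S_t$. Writing $T=\Sigma(S_t)\cup\{0\}$, the condition $|\Sigma(S_t(b))|=|\Sigma(S_t)|+1$ says exactly that $|(b+T)\setminus\Sigma(S_t)|=1$, i.e.\ that $T$ is almost invariant under translation by $b$; zero-sum freeness of $S_t(b)$ rules out exact periodicity, and the role of the chain \eqref{sav chen lemma} is to make $|\Sigma(S_i)|$ large relative to $i$ at every stage, which is what ultimately pins $b$ down. This is the analysis carried out in the cited step of \cite{savchev}; your sketch does not engage with it, and the ``main obstacle'' you flagged yourself --- whether the cited result really delivers the interval structure under these hypotheses --- is indeed fatal rather than a technicality.
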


\begin{lemma} \label{Zn_extrem1}
Any sequence $S$ over $Z_n$   contains 
 either a zero-sum sequence of length at most $\lceil \frac{n}{2} \rceil$
or an element of multiplicity at least $|S| - \lfloor \frac{n}{2} \rfloor$.
\end{lemma}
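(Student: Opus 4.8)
\textbf{Proof plan for Lemma~\ref{Zn_extrem1}.}

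The plan is to argue by contradiction: suppose $S$ has no zero-sum subsequence of length $\le \lceil n/2\rceil$ and no element of multiplicity $\ge |S|-\lfloor n/2\rfloor$. Write $|S|=l$. The second assumption means every element of $Z_n$ occurs in $S$ with multiplicity at most $l-\lfloor n/2\rfloor - 1$. First I would dispose of the case where $S$ contains the zero element $0$: a single $0$ is a zero-sum subsequence of length $1\le\lceil n/2\rceil$, so we may assume all entries of $S$ are non-zero; in particular $l\le \D(Z_n)-1 = n-1$ would be the zero-sum-free case, but $S$ need not be zero-sum free, so instead I extract a maximal zero-sum free subsequence.

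The key idea is to build a chain of zero-sum free subsequences $S_1\subset S_2\subset\cdots\subset S_t$ of $S$ with $|S_i|=i$, growing the set of partial sums. Start with $S_1$ a single non-zero entry, so $|\Sigma(S_1)|=2$. At each step, if $S_i$ is a proper subsequence of $S$ and is still zero-sum free, I adjoin one more entry $a$ of $S$ not already used. If $S_i a$ remains zero-sum free I keep it as $S_{i+1}$; the point is that adjoining $a$ to a zero-sum free sequence either keeps it zero-sum free (in which case $\Sigma$ grows: $\Sigma(S_{i+1})\supseteq \Sigma(S_i)\cup(\Sigma(S_i)+a)$, and since $a\notin -\Sigma(S_i)\cup\{0\}$ by zero-sum-freeness we gain at least... here I need a Cauchy--Davenport / Kneser-type lower bound on the growth, analogous to Lemma~\ref{lemma:easy}), or $S_i a$ has a zero-sum subsequence. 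In the latter case, that zero-sum subsequence $T\subseteq S_i a$ must contain $a$ (else $S_i$ already had one), so $T = T'a$ with $T'\subseteq S_i$, giving a zero-sum subsequence of $S$ of length $|T|\le i+1$. If $i+1\le \lceil n/2\rceil$ this already contradicts the first assumption, so I only reach trouble once $i\ge \lceil n/2\rceil$.

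The main obstacle, and where Proposition~\ref{savchev-chen zerosumfree} enters, is controlling what happens when the chain can no longer be extended while keeping $|\Sigma|$ strictly increasing — i.e.\ when every available entry $a$ either creates a zero-sum or only adds one new partial sum. When $|\Sigma(S_t)|$ reaches $n$, then $\Sigma(S_t)=Z_n\ni 0$, so $S_t$ has a zero-sum subsequence, contradiction with zero-sum-freeness; hence the chain stalls with $|\Sigma(S_t)|<n$ and $t$ bounded roughly by $n/2$ using the $+2$ growth in \eqref{sav chen lemma}. At that point the remaining $l-t$ unused entries of $S$ all fail to extend the $\Sigma$-increasing chain; Proposition~\ref{savchev-chen zerosumfree} says the element $b$ with $|\Sigma(S_t(b))|=|\Sigma(S_t)|+1$ is unique, so all but possibly that one unused value would create a short zero-sum subsequence when appended to $S_t$ — forcing all of those $l-t-(\text{at most }1)$ remaining entries to equal that single value $b$ (after also checking the ones equal to entries of $S_t$ are controlled). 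Counting multiplicities of $b$ then yields multiplicity $\ge l-t \ge l-\lfloor n/2\rfloor$, contradicting the second assumption. The delicate points will be the exact bookkeeping of $t$ versus $\lceil n/2\rceil$ and $\lfloor n/2\rfloor$ in both the even and odd $n$ cases, and verifying the hypothesis \eqref{sav chen lemma} holds along the constructed chain so that Proposition~\ref{savchev-chen zerosumfree} is applicable.
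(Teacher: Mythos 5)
Your overall strategy is the same as the paper's: build a maximal chain $S_1\subset\cdots\subset S_t$ of zero-sum free subsequences with $|S_i|=i$ and $|\Sigma(S_{i+1})|\ge|\Sigma(S_i)|+2$, observe that at the stalling point every remaining entry $b$ either creates a zero-sum subsequence of length at most $t+1$ or satisfies $|\Sigma(S_t(b))|=|\Sigma(S_t)|+1$, and invoke Proposition~\ref{savchev-chen zerosumfree} to conclude that all remaining entries coincide. This is exactly the paper's argument for the case $t\le\lceil n/2\rceil-1$, and your count $l-t\ge l-\lfloor n/2\rfloor$ is correct there. (Incidentally, no Cauchy--Davenport type input is needed for the growth of $\Sigma$ along the chain: the $+2$ growth is imposed by the choice of the chain, not proved.)

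There is, however, one genuine error: you dispose of the boundary case $|\Sigma(S_t)|=n$ by claiming that $\Sigma(S_t)=Z_n\ni 0$ contradicts zero-sum-freeness. With the paper's definition of $\Sigma$, which allows the empty index set (as your own computation $|\Sigma(S_1)|=2$ presupposes), $0\in\Sigma(S)$ holds for every sequence, so no contradiction arises; for instance $(1,2)$ over $Z_4$ is zero-sum free with $\Sigma=Z_4$. Hence the case $t=\lceil n/2\rceil$ (which forces $n$ even and $|\Sigma(S_t)|=n$) genuinely occurs, and there your main argument breaks down: $S_t(b)$ has length $\tfrac n2+1>\lceil n/2\rceil$, so the no-short-zero-sum hypothesis no longer forces $S_t(b)$ to be zero-sum free, and Proposition~\ref{savchev-chen zerosumfree} is not applicable. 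The paper treats this case separately: since $\Sigma(S_t)=Z_n$, every $a\in R$ completes with some $U\subseteq S_t$ to a zero-sum subsequence $U(a)$, whose length must exceed $\lceil n/2\rceil=t$ by hypothesis, forcing $U=S_t$ and $a=-\theta(S_t)$; so again all of $R$ is a single repeated element, of multiplicity $|S|-\tfrac n2$. You need to add this short argument; the rest of your plan is sound.
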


\begin{proof}
Suppose that $S$ does not contain a zero-sum sequence of length at most $\lceil \frac{n}{2} \rceil$
and let $S_1 \subset ... \subset S_t$ be zero-sum free sequences where $t$ is maximal with the property that 
$|\Sigma(S_{i+1})| \ge |\Sigma(S_i)| +2$ and $|S_i| = i$ for every $i \le t-1$; let   $S= S_t R$. 
By this assumption $n \ge |\Sigma(S_t) | \ge 2t$. If $t= \lceil \frac{n}{2} \rceil$, which enforces that $n$ is even, 
then $|\Sigma(S_t)| = n$, hence any $a \in R$ can be completed into a zero-sum sequence $U(a)$ with some $U \subseteq S_t$. 
By our assumption it is necessary that $|U(a)| > \lceil \frac{n}{2} \rceil$, hence $U = S_t$
and the multiplicity of  $a = -\theta(S_t)$ is at least $|R| = |S| -  \frac{n}{2} $.
It remains that $t \le \lceil \frac{n}{2} \rceil -1$. 
Then for any $b \in R$ the sequence $S_t(b)$ of length at most $\lceil \frac{n}{2} \rceil$
must be zero-sum free by our assumption, hence by the maximality property of $S_t$  necessarily $|\Sigma(S(b))| = |\Sigma(S)| +1$. 
But we know from Proposition~\ref{savchev-chen zerosumfree} that the element $b$ with these two properties is unique, 
hence $b$ has  multiplicity  $|R| \ge |S| - \lceil \frac{n}{2} \rceil +1$.
\end{proof}

\begin{lemma} \label{Zn_extrem2}
Let $S$ be a zero-sum sequence over $Z_n$ of length $|S| \ge kn +1$ 
where  $k \ge 2$, 
which does not factor into more than $k+1$ non-empty zero-sum sequences. 
Then $S= T_1T_2(e^{(k-1)n})$ where $\bra e \ket = Z_n$  and $||T_1||_e = ||T_2||_e = n$.
\end{lemma}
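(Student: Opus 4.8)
\textbf{Proof plan for Lemma~\ref{Zn_extrem2}.}

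The plan is to exploit the hypothesis $k\ge 2$ by peeling off short zero-sum subsequences and tracking how many factors this produces. First I would invoke Lemma~\ref{Zn_extrem1}: since $|S|\ge kn+1\ge 2n+1$, the sequence $S$ either contains a zero-sum subsequence of length at most $\lceil n/2\rceil$, or some element $a$ occurs with multiplicity at least $|S|-\lfloor n/2\rfloor\ge kn+1-\lfloor n/2\rfloor$. In the first case, split off that short zero-sum block $B_1$; the remainder $S B_1^{-1}$ still has length $\ge kn+1-\lceil n/2\rceil\ge (k-1)n+1$ (using $k\ge 2$, so $n\ge \lceil n/2\rceil$ leaves room), and one can iterate, accumulating factors. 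The key bookkeeping point is that a zero-sum sequence of length $\ell$ over $Z_n$ can itself be split into at least $\lceil \ell/n\rceil$ nonempty zero-sum sequences only in favorable cases, but more usefully, short zero-sum factors of length $\le \lceil n/2\rceil$ are ``cheap'': extracting two of them from a block of length $\le n$ already threatens to exceed the allowed factor count unless the structure is very rigid. I would set up the argument so that the only way to avoid producing more than $k+1$ zero-sum factors is for $S$ to consist almost entirely of copies of a single generator $e$, i.e. the second alternative of Lemma~\ref{Zn_extrem1} must hold with $a=e$ a generator and with multiplicity essentially $(k-1)n$ or more.

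Concretely, once I know some generator $e$ occurs with high multiplicity, write $S=(e^{m})R$ where $R$ collects the remaining terms and $m$ is the multiplicity of $e$. Grouping the copies of $e$ into blocks $(e^n)$ consumes $\lfloor m/n\rfloor$ zero-sum factors, and the leftover $(e^{m\bmod n})$ must be absorbed into factors involving $R$. Since $S$ does not factor into more than $k+1$ nonempty zero-sum sequences, I get $\lfloor m/n\rfloor + (\text{number of zero-sum factors needed for the rest}) \le k+1$, while the total length forces $m + |R| \ge kn+1$; because the ``rest'' has length $|R| + (m \bmod n)$ and must break into the remaining $\le k+1-\lfloor m/n\rfloor$ zero-sum factors each of length $\ge 1$ but summing to a multiple-of-$n$ quantity that is $\ge$ something like $2n$ when $R\neq\emptyset$, a counting inequality pins down $\lfloor m/n\rfloor = k-1$ and shows the non-$e$ part has total length exactly $2n$ and breaks into exactly two zero-sum sequences $T_1, T_2$, each of $||\cdot||_e$-value exactly $n$ (the $||\cdot||_e = n$ normalization comes from each $T_i$ being a zero-sum sequence over $Z_n$ whose elements, written as multiples of $e$ in $\{1,\dots,n\}$, sum to a positive multiple of $n$, and the extremal length count forces that multiple to be $1$). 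I also need to rule out the first alternative of Lemma~\ref{Zn_extrem1} recurring indefinitely: each application removing a block of length $\le \lceil n/2\rceil$ from a sequence of current length $> (k'-1)n$ is compatible with at most $k'+1$ total factors only finitely often, and chasing the arithmetic shows it cannot happen at all once $|S|\ge kn+1$ with $k\ge 2$, which is exactly where the hypothesis $k\ge2$ (as opposed to $k=1$) is used.

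The main obstacle I anticipate is the precise combinatorial bookkeeping in the mixed case — showing that the leftover part $(e^{m\bmod n})R$ cannot be packed into fewer zero-sum factors than the counting bound demands, and simultaneously cannot require more, so that equality is forced throughout and yields exactly two blocks $T_1,T_2$ with $||T_i||_e=n$. This is where I would lean on the rigidity coming from Proposition~\ref{savchev-chen zerosumfree} and Lemma~\ref{Zn_extrem1} applied to $R$ itself, rather than on ad hoc estimates: the uniqueness statement in Proposition~\ref{savchev-chen zerosumfree} is what prevents $R$ from having too rich a zero-sum structure. A secondary technical point is making sure that ``does not factor into more than $k+1$ nonempty zero-sum sequences'' is used in the correct direction — it is an upper bound on the number of factors in \emph{every} factorization, so to derive a contradiction I must exhibit \emph{some} factorization with more than $k+1$ factors whenever $S$ fails to have the claimed form, which means the extraction procedure above must be run greedily and its output counted carefully.
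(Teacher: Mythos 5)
Your overall strategy -- apply Lemma~\ref{Zn_extrem1}, peel off short zero-sum blocks, and count factors against the bound $k+1$ -- is essentially the route the paper takes: it extracts two minimal zero-sum blocks $T_1,T_2$, uses Lemma~\ref{Zn_extrem1} in the cases where one of them is long, and in the remaining case observes that $|T_1T_2|\le n+1$ forces the residue of length $\ge(k-1)n$ to be $(e^{(k-1)n})$ because it cannot split into more than $k-1$ blocks. So the first half of your plan is sound in outline, although note that the ``both blocks short'' case is not a contradiction to be chased away by arithmetic -- it is precisely the case that produces the answer (e.g.\ $S=(2^4,1^{5k-3})$ over $Z_5$ has $T_1=T_2=(2,2,1)$ of length $3=\lceil 5/2\rceil$), and your parenthetical claim that the short-block alternative ``cannot happen at all'' is wrong. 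Also, the non-$e$ part has length $n+1$, not $2n$; what equals $2n$ is $\|T_1\|_e+\|T_2\|_e$.

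The genuine gap is the last step. You assert that $\|T_1\|_e=\|T_2\|_e=n$ follows because each $\|T_i\|_e$ is a positive multiple of $n$ and ``the extremal length count forces that multiple to be $1$.'' It does not: the length count only gives $|T_1|+|T_2|=n+1$ together with $\|T_i\|_e\ge |T_i|$, which is perfectly consistent with, say, $\|T_2\|_e=2n$. Concretely, over $Z_5$ with $e=1$ the irreducible zero-sum sequence $T_2=(3,3,4)$ has length $3$ but $\|T_2\|_1=10=2n$, so a candidate $S=(1,1,3)(3,3,4)(1^{5(k-1)})$ passes every length test you propose. Excluding it requires a new idea, which is the paper's refactoring trick: if $\|T_1\|_e>n$, write $T_1=U(a)V$ at the point where the running $\|\cdot\|_e$-sum first jumps past $n$ (possible because $T_1$ is irreducible, so no partial sum hits $n$ exactly), and recombine $T_1$ with one block $(e^n)$ as
\[(e^n)\cdot T_1=(e^{n-\|U\|_e})U\cdot(e^{n-\|a\|_e}a)\cdot(e^{\|U\|_e+\|a\|_e-n})V,\]
turning two zero-sum factors into three and hence producing more than $k+1$ factors of $S$. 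Neither Proposition~\ref{savchev-chen zerosumfree} nor Lemma~\ref{Zn_extrem1} applied to $R$ supplies this; without some such swap argument your proof cannot close.
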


\begin{proof}
First we prove that an element $e \in S$ has multiplicity at least $(k-1)n$;
if so $e$ will have order $n$, for otherwise $S$ factors into at least $2(k-1)+2 > k+1$
non-empty zero-sum sequences. 
Let $S=T_1S_1$  where $T_1$ is a non-empty zero-sum sequence of minimal length in $S$.
If $|T_1| > \lceil \frac{n}{2} \rceil$ then $h(S) \ge |S| - \lfloor \frac{n}{2} \rfloor$ by Lemma~\ref{Zn_extrem1}, and we are done.
If however $|T_1| \le \lceil \frac{n}{2} \rceil$ then 
 $S_1= T_2S_2$ where $T_2$ is a minimal non-empty zero-sum sequence in $S_1$;
obviously $|T_2| \ge |T_1|$. 
If $|T_2| > \lceil \frac{n}{2} \rceil$ then  
$h(S) \ge h(S_1) \ge |S_1| - \lfloor \frac{n}{2} \rfloor \ge |S| - \lceil \frac{n}{2} \rceil - \lfloor \frac{n}{2} \rfloor = |S| -n$
by Lemma~\ref{Zn_extrem1}, and we are done again.
It remains that $|T_2| \le \lceil \frac{n}{2} \rceil$. 
Then $|T_1T_2| \le n+1$ and $|S_2| \ge (k-1)n$. 
Given that $S_2$ cannot be factored into more than $k-1$ non-empty zero-sum sequences
it is necessary that $S_2 = (e^{(k-1)n})$. 
 
Now suppose to the contrary that $||T_1||_e > n$, say. Then $T_1 = U(a)V$ where $U,V$ are non-empty subsequences
such that $||U||_e < n$, $||U(a)||_e>n$.
But then  $(e^n)\cdot T_1 = (e^{n-||U||_e})U \cdot (e^{n - ||a||_e}a) \cdot (e^{||U||_e + ||a||_e-n})V $ is a  factorization 
which leads to a decomposition of $S$  into more than $k+1$ non-empty zero-sum sequences,
and this is a contradiction.
 \end{proof}

\section{The contraction method: the groups $Z_r\rtimes_{-1}Z_{2n}$} \label{sec:kontrakcio}

Let  $B\le A$ be a subgroup of an abelian group $A$. 
If $S=(s_1, ..., s_d)$ is a sequence over $A$, then $(s_1+B, ..., s_d+B)$ is a sequence over $A/B$  
which will be denoted by $S/B$. Suppose that $\theta(S) \in B$; 
a \emph{$B$-contraction} of  $S$ is a sequence over $B$ of the form $(\theta(S_1),...,\theta(S_l))$
where $S=S_1... S_l$  
and each $S_i/B$ is an irreducible zero-sum sequence over $A/B$;
so  indeed $\theta(S_i) \in B$.  

Suppose that  $A$ is a non-trivial abelian normal subgroup of $G$. 
Let $C<A$ be a subgroup such that  $C\triangleleft G$ hence $A/C$ is a non-trivial abelian normal subgroup of $G/C$. 
Suppose moreover that any $G$-module has an $A$-eigenbasis permuted up to scalars by $G$, so 
we can apply the conventions of Section~\ref{sec:GA} both for the pair $(G,A)$ and $(G/C,A/C)$. 
Note that $\widehat{A/C}$ is naturally a subgroup of $\hat A$, thus 
the  above notion of contractions can be applied based on the following observation:

\begin{lemma} \label{resolution}
For any $G$-module $V$  there exists a $G/C$-module $U$ and 
a $G/C$-equivariant $\F$-algebra epimorphism $ \pi: \F[U] \to \F[V]^C $ such that  
any monomial $m \in \F[V]^C$ has a preimage $\tilde{m} \in \pi^{-1}(m)$ with 
$\Phi(\tilde{m})$ equal to an arbitrarily prescribed $\widehat{A/C}$-contraction of $\Phi(m)$. 
\end{lemma}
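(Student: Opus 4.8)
\medskip

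The plan is to build $U$ and $\pi$ explicitly from an $A$-eigenbasis of $V^*$. Fix variables $x_1,\dots,x_n$ forming an $A$-eigenbasis of $\F[V]_1 = V^*$ that is permuted up to scalars by $G$, with $x_i$ of weight $\theta_i \in \hat A$. The invariant ring $\F[V]^C$ is spanned by the monomials $m$ with $\theta(\Phi(m)) \in \widehat{A/C}$ viewed inside $\hat A$ (equivalently, $\Phi(m)/C$ is a zero-sum sequence over $\widehat{A/C}$... more precisely a zero-sum sequence over the quotient $\hat A / \widehat{A/C} \cong \hat C$), and is minimally generated by those monomials $w$ for which $\Phi(w)/\!\sim$ is an \emph{irreducible} such sequence — call these the $C$-primitive monomials. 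The idea is to take $U$ to be the $G/C$-module whose dual has a basis $\{y_w\}$ indexed by (a set of representatives of) the $C$-primitive monomials $w$, with $G/C$ permuting the $y_w$ up to scalars exactly as $G$ permutes the corresponding $w \in \F[V]^C$, and with $y_w$ carrying the $\widehat{A/C}$-weight $\theta(\Phi(w)) \in \widehat{A/C}$. Then $\pi : \F[U] \to \F[V]^C$ is the $\F$-algebra map $y_w \mapsto w$.

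\medskip

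First I would check that $\pi$ is well-defined and a $G/C$-equivariant epimorphism: surjectivity is exactly the statement (from Section~\ref{sec:GA}, applied to the pair $(G/C, A/C)$, or directly) that the $C$-primitive monomials generate $\F[V]^C$ as an algebra; $G/C$-equivariance holds because $C$ acts trivially on $\F[V]^C$ and the permutation-up-to-scalars action of $G$ on the $w$'s descends to $G/C$ and matches the chosen action on the $y_w$'s. One must be slightly careful about the scalars: when $g \in G$ sends $w$ to $\lambda\, w'$, one defines the action on $y_w$ to send it to $\lambda\, y_{w'}$, and consistency over the group (a cocycle-type condition) follows because it already holds in $\F[V]^C$ where the $w$ live; this is the one genuinely fiddly bookkeeping point but is routine. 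Note $\pi$ is generally not injective — $U$ has many more variables than $V$ — which is fine, we only need an epimorphism.

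\medskip

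The second and main part is the monomial-lifting clause. Given $m \in \F[V]^C$ a monomial and a prescribed $\widehat{A/C}$-contraction $(\theta(S_1),\dots,\theta(S_l))$ of $\Phi(m)$, arising from a factorization $\Phi(m) = S_1 \cdots S_l$ with each $S_i/C$ irreducible zero-sum over $A/C$: the factorization of the \emph{sequence} $\Phi(m)$ refines to a factorization of the \emph{monomial} $m = w_1 \cdots w_l$ where $\Phi(w_i) = S_i$, and each $w_i$ is (by irreducibility of $S_i/C$) a $C$-primitive monomial, hence equals a scalar times one of our chosen representatives, say $w_i = \mu_i\, w_{(i)}$. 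Then $\tilde m := \mu_1^{-1}\cdots\mu_l^{-1}\, y_{w_{(1)}} \cdots y_{w_{(l)}} \in \F[U]$ satisfies $\pi(\tilde m) = m$, and $\Phi(\tilde m) = (\theta(\Phi(w_{(1)})),\dots,\theta(\Phi(w_{(l)}))) = (\theta(S_1),\dots,\theta(S_l))$, which is the prescribed contraction. The point to verify here is just that the weight of $y_w$ in $\widehat{A/C}$ was \emph{defined} to be $\theta(\Phi(w))$, so $\Phi(\tilde m)$ comes out as claimed; this is immediate from the construction.

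\medskip

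I expect the main obstacle to be purely organizational rather than deep: setting up the indexing set of $C$-primitive monomials modulo the scalar/similarity ambiguity so that the $G/C$-action on $U$ is consistently defined, and making sure the statement "a factorization of $\Phi(m)$ lifts to a factorization of the monomial $m$" is phrased correctly (it is, since a monomial is literally a multiset of variables and $\Phi$ just reads off their weights, so any partition of the variable-multiset of $m$ is simultaneously a factorization of $m$ and of $\Phi(m)$). No inequality or extremal combinatorics is needed for this lemma — it is a structural/functorial statement — so once the bookkeeping is in place the proof is short.
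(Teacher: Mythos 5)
Your proposal is correct and follows essentially the same route as the paper: the paper also takes $U$ to be the dual of the span of the irreducible $C$-invariant monomials $E$ (with the inherited $G/C$-action), defines $\pi$ by sending each variable to the corresponding monomial, and lifts a prescribed contraction by refining the sequence factorization $\Phi(m)=S_1\cdots S_l$ to a monomial factorization $m=m_1\cdots m_l$. The only difference is that your scalar/representative bookkeeping is unnecessary, since the monomials themselves form a canonical basis of a $G/C$-stable subspace of $\F[V]^C$.
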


\begin{proof}
By assumption $V^*$ has a basis $x_1,...,x_n$ consisting of $A$-eigenvectors which are permuted up to scalars by  $G$. 
Let $M$ be the set of $C$-invariant monomials in these variables, and $E \subset M$ the subset of the irreducibles among them,
i.e.  which cannot be factored into  two non-trivial $C$-invariant monomials. 
$\F[V]^C$ is minimally generated as an algebra by $E$. 
Moreover the factor group $G/C$ has an inherited action on ${\F}[V]^C$,  
and permutes the elements of $E$ up to non-zero scalar multiples. 
Define $U$ as the dual of the $G/C$-invariant subspace $\mathrm{Span}_{\F}(E)$. 
$E$ is a basis of this vector space, hence $E$ is identified with the set of variables in $\F[U]$. 
The $\F$-algebra epimorphism $\pi:\F[U]\to\F[V]^C$ taking a variable to the corresponding irreducible $C$-invariant monomial is $G/C$-equivariant.
Now let $(\theta(S_1),...,\theta(S_l))$ be an arbitrary $\widehat{A/C}$-contraction of $\Phi(m)$ for a monomial $m \in \F[V]^C$. 
By definition this means that $m = m_1...m_l$ where each $m_i$ is an irreducible $C$-invariant monomial with $\Phi(m_i) = S_i$. 
Hence for each $i$ there are variables $y_1,...,y_l \in \F[U]$ such that $\pi(y_i) = m_i$ by construction, 
and the monomial $\tilde{m} := y_1...y_l$ has the required property.
\end{proof}

Using this map $\pi$ we can derive  information on the generators of $\F[V]^G$
from our preexisting knowledge about the generators of $\F[U]^{G/C}$. 
As an example of this principle, we will study here the group
$G := Z_r \rtimes_{-1} Z_{2n}$
where $r$ and $2n$ are  coprime, $r \ge 3$, and the generator of $Z_{2n}$ operates by inversion on $Z_r$. 
The center of $G$ is $C = Z_{n} $ and $G/C$ is isomorphic to the dihedral group $D_{2r}$
whose extremal monomials were described before.
$G$ has the abelian normal subgroup $A \cong Z_{rn}$, $A \ge C$ such that $G/A = \bra b \ket \cong Z_2 $. 
We will write  $S\sim S'$ for two sequences over $\hat{A}$ if $S=EF$ and $S'=E^bF$ for a zero-sum sequence $E$ of length at most $n$.

\begin{proposition} \label{prop:kontra}
If $S$ is a $k$-extremal sequence over $\hat{A}$ then
any $\widehat{A/C}$-contraction of any sequence $S' \sim S$ is a $k$-extremal sequence over $\widehat{A/C}$. 
\end{proposition}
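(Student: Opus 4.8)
The plan is to unwind the two modifications to $S$ — replacing $S$ by a similar-enough sequence $S'$, and then passing to a contraction — keeping track at each stage of (a) the length, which must stay at $\beta_k(G)$, and (b) the non-vanishing of the relevant transfer modulo $(k+1)$-st powers. The starting point: by definition there is a $G$-module $V$ and a monomial $m \in \F[V]^A$ with $\Phi(m) = S$, $\deg(m) = |S| = \beta_k(G)$, and $\tau_A^G(m) \notin R_+^{k+1}$ where $R = \F[V]^G$. Since $G/A = \langle b\rangle \cong Z_2$, the map $\tau_A^G$ is just $u \mapsto u + u^b$, and the congruence \eqref{masodik}-type identities of Proposition~\ref{prop:dihedraltype} are available.

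\textbf{Step 1: invariance under $\sim$.} First I would show that if $S' \sim S$, say $S = EF$ and $S' = E^b F$ with $E$ a zero-sum sequence of length $|E| \le n$, then $S'$ is again $k$-extremal with the \emph{same} witnessing module $V$. Factor the monomial $m = ef$ according to the factorization $S = EF$ (so $\Phi(e) = E$, $\Phi(f) = F$); since $E$ is a zero-sum sequence over $\hat A$, the monomial $e$ is $A$-invariant, hence so is $m' := e^b f$, and $\Phi(m') = E^b F = S'$, $\deg(m') = \deg(m) = \beta_k(G)$. The content of this step is that $\tau_A^G(m') \notin R_+^{k+1}$. For this one computes $\tau_A^G(m') = e^b f + (e^b f)^b = e^b f + e f^b = e^b f + (ef^b)$; comparing with $\tau_A^G(m) = ef + e^b f^b$, one uses the identity of the shape $ef + e^bf^b + \text{(cross terms)} = (e+e^b)(f+f^b) = \tau_A^G(e)\,\tau_A^G(f)$ when $e,f$ are themselves eigenvectors — but here $e$ is $A$-invariant so $\tau_A^G(e) = e + e^b \in I_+$ with $\deg \le n$, while $f$ need not be invariant. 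I expect the clean statement to be: $\tau_A^G(m) - \tau_A^G(m') = \tau_A^G(e)\cdot(\text{something}) - (\text{something}')\cdot\tau_A^G(\cdots)$, i.e. $\tau_A^G(m) \equiv \tau_A^G(m')$ modulo $I_{\le n}\,R_+$, and since $|E| \le n < \D(A)$-range this correction term lands in $R_+^{k+1}$ by Proposition~\ref{prop:dihedraltype} combined with the degree bookkeeping (the cofactor has degree $\beta_k(G) - |E| \ge \beta_k(G) - n$, large enough to force it into $R_+^k I_+$, hence after multiplying by $I_{\le n}$ and transferring, into $R_+^{k+1}$). So $\tau_A^G(m) \notin R_+^{k+1}$ forces $\tau_A^G(m') \notin R_+^{k+1}$, proving $S'$ is $k$-extremal. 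This already lets us assume $S' = S$ and reduces to:

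\textbf{Step 2: invariance under contraction.} Now apply Lemma~\ref{resolution} with $C = Z_n$ the center, getting a $G/C$-module $U$ and a $G/C$-equivariant epimorphism $\pi: \F[U] \to \F[V]^C$, together with a preimage $\tilde m \in \pi^{-1}(m)$ whose weight sequence $\Phi(\tilde m)$ equals the prescribed $\widehat{A/C}$-contraction $\bar S$ of $S$. Write $\bar R = \F[U]^{G/C}$. The key points to verify are: (i) $\tilde m \in \F[U]^{A/C}$, which holds because $\pi$ is $G/C$-equivariant and $m \in \F[V]^A$ means $m$ is $A/C$-invariant as an element of $\F[V]^C$ (each irreducible $C$-invariant factor $m_i$ of $m$ has weight $\theta(S_i) \in \widehat{A/C}$ acted on by $A/C$); (ii) $\deg(\tilde m) = l = |\bar S|$ equals $\beta_k(G/C)$ — this needs $|\bar S| = \beta_k(D_{2r})$, which should follow since a contraction of a length-$\beta_k(G)$ extremal sequence has exactly the predicted length $kr+1 = \beta_k(D_{2r})$ once one knows (from the known extremal description, Corollary~\ref{cor:dihedralbeta}, Proposition~\ref{extrem_Dn}) that $S$ has the near-homogeneous shape forcing each irreducible $A/C$-block to have length $n$; and (iii) $\tau_{A/C}^{G/C}(\tilde m) \notin \bar R_+^{k+1}$. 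For (iii) I would use naturality of transfer: $\pi \circ \tau_{A/C}^{G/C} = \tau_A^G \circ \pi$ on the appropriate subspaces (both are "$u \mapsto u + u^b$" and $\pi$ intertwines the $b$-action), and $\pi(\bar R_+^{k+1}) \subseteq R_+^{k+1}$; hence $\tau_{A/C}^{G/C}(\tilde m) \in \bar R_+^{k+1}$ would give $\tau_A^G(m) = \tau_A^G(\pi(\tilde m)) = \pi(\tau_{A/C}^{G/C}(\tilde m)) \in \pi(\bar R_+^{k+1}) \subseteq R_+^{k+1}$, a contradiction. So $\bar S$ is $k$-extremal over $\widehat{A/C}$.

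\textbf{Main obstacle.} The delicate point is Step 1 — verifying that applying $b$ to a short zero-sum block $E$ does not move the transfer into $R_+^{k+1}$. This requires the precise identity relating $\tau_A^G(ef)$ and $\tau_A^G(e^bf)$ and a careful degree count showing the discrepancy term has a cofactor long enough (degree $\ge (k-1)\D(A) + d + 2$ or $k\D(A)+2$ in the sense of Proposition~\ref{prop:dihedraltype}) to be absorbed. The condition $|E| \le n$ in the definition of $\sim$ is presumably exactly what makes $\D(A)$ vs. $n$ and $\deg(m) = kn+1$-type arithmetic work out; I expect one also needs to know, from Proposition~\ref{extrem_Dn} applied to $G/C \cong D_{2r}$ or from the structure of $k$-extremal sequences established earlier, that $\deg(m)$ is large enough ($\beta_k(G) \ge k\,\D(A/C)\cdot\text{stuff}$) for the induction in Proposition~\ref{prop:dihedraltype} to bite. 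A secondary subtlety is checking in Lemma~\ref{resolution}'s setup that $b$ acts compatibly so that $\tau_{A/C}^{G/C}$ on $\F[U]$ really does pull back to $\tau_A^G$ on $\F[V]^C$ — i.e. that the chosen identification of variables of $\F[U]$ with irreducible $C$-invariant monomials respects the $\langle b\rangle$-action up to the scalars that the transfer is insensitive to. Once these compatibilities are nailed down, the rest is the straightforward bookkeeping sketched above.
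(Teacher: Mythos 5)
Your two-step architecture is, up to reordering, the same as the paper's proof: the paper first handles untwisted contractions via the map $\pi$ of Lemma~\ref{resolution}, the relation $\tau\pi=\pi\tilde\tau$ coming from $G/C$-equivariance, and the length estimate $|\tilde S|\ge\frac1n|S|\ge kr+1=\beta_k(D_{2r})$ (your Step 2), and then treats $S'=E^bF$ via the identity $\tau(uv)=\tau(u)\tau(v)-\tau(u^bv)$ together with a degree count forcing $\tau(v)\in R_+^k$ (your Step 1). Your Step 2 is essentially correct as written; note only that you do not need $|\bar S|=\beta_k(G/C)$ exactly: if $|\bar S|>\beta_k(G/C)$ then $\tilde\tau(\tilde m)\in\bar R_+^{k+1}$ for pure degree reasons, so the detour through Proposition~\ref{extrem_Dn} is unnecessary (and would be somewhat circular, since Proposition~\ref{prop:kontra} is an input to determining the extremal sequences of $G$).

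The one genuine gap is the absorption step in your Step 1, which you yourself flag as the main obstacle. You propose to push the cofactor $f$, of degree $\ge\beta_k(G)-n\ge knr+1-n$, into $R_+^kI_+$ using Proposition~\ref{prop:dihedraltype}. This does not work: first, that proposition is proved for groups of the form $A\rtimes_{-1}Z_2$, and $Z_r\rtimes_{-1}Z_{2n}$ is \emph{not} of this form with respect to $A=Z_{rn}$ (the coset representative $b$ centralizes the central $Z_n\le A$ rather than inverting it, and $b^2\neq 1$, so the identity \eqref{masodik} for non-$A$-invariant monomials is unavailable); second, even formally its condition (i) requires degree at least $k\D(A)+2=knr+2$, which $\deg(f)$ need not reach. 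What is actually needed, and what the paper uses, is the upper bound $\beta_{k-1}(G)\le\beta_{\beta_{k-1}(D_{2r})}(Z_n)=n\bigl(r(k-1)+1\bigr)$ obtained from Lemma~\ref{lemma:reduction}(iii) applied to the central subgroup $C=Z_n$ together with Corollary~\ref{cor:dihedralbeta}; since $r\ge3$ this gives $\deg(f)>\beta_{k-1}(G)$, hence $\tau(f)\in R_+^{k}$ and $\tau(e)\tau(f)\in R_+^{k+1}$ directly. With that substitution (only the identity $\tau(ef)+\tau(e^bf)=\tau(e)\tau(f)$ for $A$-invariant $e,f$ is needed, which survives $b^2\neq1$ because $e^{b^2}=e$), your argument closes and coincides with the paper's.
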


\begin{proof}
Since $S$ is a $k$-extremal sequence, there is a $G$-module $V$ and a monomial $m\in \F[V]^A$ such that $\Phi(m) =S$
and $m$ is $k$-extremal with respect to $\tau_A^G$.
Let $\pi: \F[U]^{A/C} \to \F[V]^A$ denote the restriction of the map constructed in Lemma~\ref{resolution}
to the $A$-invariants, 
and consider the transfer maps $\tilde{\tau}: \F[U]^{A/C} \to \F[U]^{G/C}$, $\tau: \F[V]^A \to \F[V]^G$. 
The $G/C$-equivariance of $\pi$ implies that $\tau \pi = \pi \tilde{\tau}$.  
Suppose first that $S$ has a non-$k$-extremal $C$-contraction $\tilde{S}$. 
Since $|\tilde S|\ge \frac{1}{n}|S|$ where we have $|S| = \beta_k(G) \ge knr +1$ by Theorem~\ref{thm:lower},
it follows that $|\tilde{S}| \ge kr+1 = \beta_k(G/C)$ by Corollary~\ref{cor:dihedralbeta}. 
So for the monomial $\tilde{m} \in \F[U]$ with $\pi(\tilde{m}) = m$ and $\Phi(\tilde{m}) = \tilde{S}$, which exists  by Lemma~\ref{resolution},
we have $\tilde{\tau}(\tilde{m}) \in (\F[U]^{G/C}_+)^{k+1}$. But then $\tau(m) = \pi(\tilde{\tau}(\tilde{m})) \in (\F[V]^G_+)^{k+1}$,
a contradiction.

Now suppose that a sequence $S' =E^bF$ has a $C$-contraction $\tilde{S}$ which is not $k$-extremal, where $0 <|E| \le n$.
Then take a factorization $m=uv$ with $\Phi(u) =E$ and $\Phi(v) = F$. 
By the previous argument  $\tau(u^bv) \in (\F[V]^G_+)^{k+1}$. 
By Lemma~\ref{lemma:reduction} and Corollary~\ref{cor:dihedralbeta} we have $\beta_{k}(G) \le \beta_{\beta_{k}(D_{2r})}(C) = nrk+n$,
hence $\deg(v)=\deg(m)-|E|\ge \beta_k(G)-n\ge nrk+1-n> nr(k-1) +n \ge \beta_{k-1}(G)$. 
Consequently  $\tau(v)\in (\F[V]_+^G)^k$ and
 $\tau(m) = \tau(u)\tau(v) - \tau(u^b v) \in (\F[V]^G_+)^{k+1}$,
a contradiction again.
\end{proof}

In the following statement we identify $\hat A=Z_{rn}$ with the additive group of $\mathbb{Z}/rn\mathbb{Z}$ and write $0,1,2,\dots$ for its elements, whenever it seems convenient. 

\begin{lemma} \label{lemma:kontra}
Let $S$ be a zero-sum sequence over $\hat{A} = Z_{rn}$ having length at least $nrk+1$, where   $k \ge 1$,  
$n\ge 3$, $r \ge 3$, and $r$ and $2n$ are coprime. 
If any $Z_r$-contraction of any sequence $S' \sim S$ is similar to $(0,n^{rk})$ then  
$S$ is similar to $(0,1^{nrk})$. 
\end{lemma}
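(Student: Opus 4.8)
The plan is to analyze the structure of a zero-sum sequence $S$ over $\hat A = Z_{rn}$ of length $\ge nrk+1$ all of whose $Z_r$-contractions (after the $\sim$-equivalence) are similar to $(0, n^{rk})$, and to deduce that $S$ itself is similar to $(0, 1^{nrk})$. The key point is to exploit the hypothesis on \emph{all} contractions, not just one: if some $Z_r$-contraction of $S$ is $(0,n^{rk})$, this says that $S$ decomposes as $S = S_0 S_1 \cdots S_{rk}$ where $S_0/Z_r$ and each $S_i/Z_r$ is an irreducible zero-sum sequence over $A/C \cong Z_r$, with $\theta(S_0) = 0$ and $\theta(S_i) = $ a generator $g$ of $\widehat{A/C} \cong Z_n$ for each $i$ — more precisely, after applying a suitable automorphism, the contracted values are $0$ together with $rk$ copies of the \emph{same} element of order $n$ in $\widehat{A/C}$. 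First I would record what this forces on the lengths: since each $S_i/Z_r$ is a nonempty zero-sum sequence over $Z_r$, we have $|S_i| \ge$ (length needed for a zero-sum over $Z_r$), and summing the $|S_i|$ against the total length $|S| \ge nrk+1$ pins down the $|S_i|$ fairly tightly (each contraction block has length roughly $r$, and there are about $nk$ of them plus the block mapping to $0$).

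Next I would pass to the mod-$C$ picture. Write $\bar S := S/C$, a sequence over $A/C \cong Z_r$. Since $C = Z_n$ and $\widehat{A/C} \hookrightarrow \hat A$ has index $r$, the element $1 \in Z_{rn}$ reduces to a generator of $Z_r$. I expect the heart of the argument to be: show first that $\bar S$ is similar (over $Z_r$) to $(0^?, 1^{|S|-?})$ — i.e.\ after an automorphism of $Z_r$, essentially all terms of $S$ lie in the same coset of $C$, namely the generating coset. This should follow by applying the already-established description of the $k'$-extremal / contraction-extremal sequences over the dihedral-type data $(G/C, A/C) = (D_{2r}, Z_r)$: by Proposition~\ref{prop:kontra} any $\widehat{A/C}$-contraction of any $S' \sim S$ is $k$-extremal over $\widehat{A/C}$, and by the hypothesis it is similar to $(0, n^{rk})$; feeding this through Lemma~\ref{Zn_extrem2} (applied with the role of $n$ played by $r$, using $|\bar S| \ge nrk+1 \ge $ the threshold and $k \ge 1$ — here one must handle $k=1$ separately since Lemma~\ref{Zn_extrem2} needs $k \ge 2$, presumably via Proposition~\ref{extrem_Dn}) one learns that $\bar S$ has an element of order $r$ occurring with multiplicity $\ge (r-1)\cdot$something, forcing, after rescaling by $\Aut(Z_r)$, that $\bar S \sim (0^c, 1^{|S|-c})$ for a small $c$ (in fact $c \in \{0,1\}$ or so).

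Having reduced to the case where all but a bounded number of terms of $S$ lie in the coset $1 + C$, I would then lift back: each such term is of the form $1 + rn_j \pmod{rn}$, i.e.\ $1, 1+r, 1+2r, \ldots$ Now the freedom in $\sim$ (replacing $S = EF$ by $E^b F$ for a zero-sum $E$ with $|E| \le n$) lets me flip short zero-sum blocks by the inversion $b$, and the contraction operation groups terms of $\bar S = 1^{\approx |S|}$ into blocks of length exactly $r$ (each block being the unique irreducible zero-sum $(1^r)$ over $Z_r$), whose $\theta$-value over $C$ is $r \cdot(\text{average offset})$. The hypothesis that every such contraction is similar to $(0, n^{rk})$ — i.e.\ the $\theta$-values of the length-$r$ blocks are, up to $\Aut(Z_n)$, all equal to one generator of $Z_n$ except one block summing to $0$ — must be used to force the offsets $n_j$ to be essentially constant: if they varied, I could regroup the $1$'s into length-$r$ blocks producing two \emph{different} nonzero contracted values (hence a contraction not similar to $(0,n^{rk})$), or re-flip a block to change a contracted value, contradicting the hypothesis. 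A Cauchy–Davenport / partial-sums argument (Lemma~\ref{lemma:easy}, Lemma~\ref{lemma:freeze-smith}, or Proposition~\ref{savchev-chen zerosumfree} on uniqueness of completions) should turn "all length-$r$ regroupings of the offsets give the same value mod $n$" into "the offsets are all congruent mod $n$", which combined with the coset information gives that $S$ itself, after an automorphism of $Z_{rn}$ fixing the coset structure, is $(0, 1^{nrk})$; the leftover bounded-size exceptional part is absorbed using the $\sim$-moves and a length count against $\beta_k(G) = nrk+1$.

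**Main obstacle.** The delicate step is the middle one: converting the combined constraint "\emph{every} $Z_r$-contraction of \emph{every} $S' \sim S$ is similar to $(0,n^{rk})$" into rigidity of the offsets modulo $n$. The subtlety is that "similar" allows an independent $\Aut(Z_n)$ on each contraction, so one cannot naively say two contractions are equal; one must argue that within a \emph{single} contraction all nonzero $\theta$-values coincide (that is genuinely forced, since $(0,n^{rk})$ has only one nonzero value repeated), and then use overlapping regroupings plus the $b$-flips to propagate this across the whole sequence. Carefully bookkeeping the boundary blocks (the one contracting to $0$, and the $\le n$ flipped terms, and the $O(1)$ exceptional non-coset terms) so that the length arithmetic still forces $S \sim (0,1^{nrk})$ rather than something like $(0^2, 1^{nrk-1}, \ldots)$ will be where the real work lies, and is presumably why $n\ge 3$, $r\ge 3$, and $\gcd(r,2n)=1$ are all needed.
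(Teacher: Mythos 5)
Your high-level architecture is the right one and matches the paper's: quotient by a subgroup and apply Lemma~\ref{Zn_extrem2} to get that almost all terms of $S$ lie in a single coset; then swap short subsequences between contraction blocks and use the hypothesis that \emph{every} contraction of \emph{every} $S'\sim S$ is similar to $(0,n^{rk})$ (which, as you note, has only one nonzero value) to rigidify the lifts; finally absorb the exceptional block using the $b$-flip moves. However, there is a genuine gap: you have the two subgroups of $\hat A=Z_{rn}$ interchanged throughout the middle of the argument. A $Z_r$-contraction uses $B=Z_r=\langle n\rangle\le Z_{rn}$, so it decomposes $S$ into blocks $S_i$ whose reductions modulo $\langle n\rangle$ are irreducible zero-sum sequences over $Z_{rn}/Z_r\cong Z_n$; hence the blocks have length at most $n$ (there are $rk+1$ of them, consistent with $|S|=nrk+1$), and the contraction values $\theta(S_i)$ lie in $\langle n\rangle\cong Z_r$ -- indeed $(0,n^{rk})$ is a sequence over $Z_r$, and $n$ generates $Z_r$ because $\gcd(n,r)=1$. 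Your picture -- blocks of length $r$ reducing to $(1^r)$ over $Z_r$, with $\theta$-values in $C=Z_n$ -- is inconsistent with the data: $|S|/r=nk+O(1)$ does not equal the contraction length $rk+1$, and ``$n$ as a generator of $Z_n$'' is $0$. Because of this, your first step (applying Lemma~\ref{Zn_extrem2} to $\bar S=S\bmod r$ over $Z_r$) is unsupported: the hypothesis bounds the number of zero-sum factors of $S\bmod n$ over $Z_n$ (any such factorization refines to a contraction, which must have length $rk+1$), but it says nothing about factorizations of $S\bmod r$ over $Z_r$, which is what your application would require. The correct move, as in the paper, is to apply Lemma~\ref{Zn_extrem2} to $S/Z_r$ over $Z_n$ with parameter $rk\ge 3$ (so no separate treatment of $k=1$ is needed), concluding that at least $(rk-1)n$ terms of $S$ are congruent to a fixed generator $e$ modulo $n$; the swapping argument then pins down each such term inside its coset of $\langle n\rangle$.

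A second, smaller issue is that the endgame is only gestured at. Roughly half of the paper's proof is devoted to the block $T_s$ with $\theta(T_s)=0$: one must rule out $|T_s|\ge 3$ by replacing $T_s$ with $T_s^b$ (using that $\|x+Z_r\|_e$ is $b$-invariant while $\|x\|_f$ is not unless $x\in Z_n$), reduce $|T_s|=2$ with $T_s=(-y,y)$, $y\ne -f$, back to the previous case by refactoring $T_sT_1$, and dispose of $y=-f$ by another $b$-flip, which is exactly where $n\ge 3$ enters (to guarantee $f^b\ne\pm f$). Your phrase ``the leftover bounded-size exceptional part is absorbed using the $\sim$-moves and a length count'' points in the right direction but does not contain this case analysis, and the length count alone does not suffice.
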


\begin{proof}
By assumption any $Z_r$-contraction of $S$ must have length $l:=rk+1$.  
By Lemma~\ref{Zn_extrem2} then $S= T_1...T_l$ where $T_i/Z_r=(e^n)$
for every $i \le l-2$ and some generator $e$ of $Z_{rn}/Z_r \cong Z_n$, 
while $||T_{l-1}/Z_r||_e = ||T_l/Z_r ||_e = n$, and we may assume that the sequence $(\theta(T_1),\dots,\theta(T_l))$ equals $(0,n^{rk})$. 
In particular, at most one element of the sequence $S$ belongs to $Z_n$, and so $x^b\neq x$ for $x\in S$ with at most one 
exception. 
As $l \ge 4$ we may assume that $\theta(T_1) \neq 0$ and let $i \neq 1$ be any other index for which $\theta(T_i) \neq 0$.
Take an arbitrary element $x \in T_i$ and let $U \subseteq T_1$ be an arbitrary subsequence of length $d:= ||x+ Z_r||_e < n$.
After exchanging the proper subsequences $U$ and $(x)$ in $T_1$ and $T_i$
 the resulting  $\tilde{T}_1 $ and $\tilde{T}_i$  projects to zero-sum sequences over $Z_n$, so we get another $Z_r$-contraction of $S$:
\[(\theta(\tilde{T}_1), \theta(T_2),..., \theta(\tilde{T}_i),..., \theta(T_l)) = (0, n^{rk-2}, n-\delta, n+\delta)\]
where $\delta := \theta(U) - x$. By assumption this must be similar to $(0,n^{rk})$ 
which is only possible if they are actually equal (here we used that $l \ge 4$).
Therefore $\delta = 0$ and $x=\theta(U) $.
As this holds for any  subsequence $U' \subseteq T_1$ of  the same length $d < |T_1|$, 
necessarily $T_1=(f^n)$ for some  generator  $f \in Z_{nr}$ such that $f + Z_r =e$. 
Moreover, as $x =\theta(U) =  df$, we get by the definition of $d$ and $||x||_f$ that
\begin{align} \label{eq:autista}
||x||_f = || x + Z_r||_e
\end{align}
for every $x \in T_i$, where $i$ differs from that unique index $s$ for which $\theta(T_s) = 0$. 
Observe on the other hand that \eqref{eq:autista} cannot be true for every element $y \in T_s$,
for otherwise $|| T_s||_f = ||T_s/Z_r||_e = n$, which is impossible, as $||T_s||_f$  must be a multiple of $nr$. 
Now suppose that $|T_s| \ge 2$ and that \eqref{eq:autista} fails for $y \in T_s$. 
Then swapping $y$ with a proper subsequence  $U \subseteq T_1$ of length $ ||y+Z_r||_e$
we get as before that $\delta:= \theta(U) - y = -nf$, whence $||y||_f = ||y+Z_r||_e +n(r-1)$.
On the other hand if $z \in T_s$ is a second element besides $y$ for which \eqref{eq:autista} fails, 
then in particular $(yz) \neq T_s$, as otherwise calculating $||z||_f$ by the same argument yields that
 $||T_s||_f = ||T_s/Z_r||_e + 2n(r-1) = n(2r-1)$, which is not  a multiple of $nr$. 
 Now swapping $(yz)$ with a proper subsequence of $T_1$ of length $||yz +Z_r||_e$ gives
 a $Z_r$-contraction of $S$ of the form $(2n,-n,n^{rk-2})$ which is not similar to $(0,n^{rk})$.
 This contradiction shows that $y$ is unique with the property that $||y||_f \neq ||y + Z_r||_e$.
 So if $|T_s| \ge 3$ then the sequence $S'$ obtained from $S$ by replacing $T_s$ with $T_s^b$
 will not satisfy this requirement:  indeed, $||x+Z_r||_e=||x^b+Z_r||_e$ for all $x$, whereas $||x||_f=||x^b||_f$ means $x\in Z_n\subset Z_{nr}$.  Thus  $S'$ will have $Z_r$-contractions not similar to $(0,n^{rk})$,
 which is a contradiction as $S' \sim S$. 
 It remains that $|T_s| = 2$ and $T_s = (-y,y)$. Then necessarily  $s \in \{  l-1,l \}$ and $T_1=\dots=T_{l-2}=(f^n)$. 
 If moreover $y \neq -f$ then $n(r-1) < ||y||_f < nr -1$ and consequently we have the factorization
 $T_sT_1 = (-y,y, f^n) = (y, f^{nr - ||y||_f})(-y, f^{||y||_f - n(r-1)})$
 which leads us back to the case when $|T_s| \ge 3$. 
 Finally, if $y = -f$ then observe that $f^b \neq \pm f$, as we have $n >2$;
hence after replacing $T_s$ with $T_s^b \neq (-f,f)$ we get back to the case when $y \neq -f$. 

 As a result of these contradictions we excluded that $|T_s| \ge 2$. 
 Therefore $|T_s| =1$ and $T_s=(0)$. Then
 we must have $|T_i| = n$ for every $i \neq s$ whence $|T_i/ Z_r| = (e^n)$ follows. Using \eqref{eq:autista}
this implies that  $S=(0,f^{nrk})$.
\end{proof}

\begin{theorem} \label{thm:G_1} 
For the group $G= Z_s \times (Z_r \rtimes_{-1} Z_{2^{n+1}})$, where  $r \ge 3$, $n \ge 1$ and $r,s$ are coprime odd integers,
we have $\beta_k(G) = 2^nsr k + 1$, except if $s=n=1$, in which  case $\beta_k(G) = 2rk+2$.
\end{theorem}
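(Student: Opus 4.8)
\textbf{Proof plan for Theorem~\ref{thm:G_1}.}
The plan is to treat the group $G=Z_s\times(Z_r\rtimes_{-1}Z_{2^{n+1}})$ as a special instance of the framework of Section~\ref{sec:kontrakcio}. Write $N:=Z_r\rtimes_{-1}Z_{2^{n+1}}$; then $N=Z_r\rtimes_{-1}Z_{2d}$ with $d=2^n$, so the discussion preceding Proposition~\ref{prop:kontra} applies to $N$ with center $C\cong Z_d$, $N/C\cong D_{2r}$, and abelian normal subgroup $A\cong Z_{rd}$ of index two in $N$. Since $s$ is coprime to $|N|$ and $G=Z_s\times N$, one first reduces to understanding $\beta_k(N)$ and then multiplies through by $s$: indeed $\F[V]^G$ for a $G$-module $V$ decomposes along the $Z_s$-isotypic components, and the $k$-extremal monomials over $\widehat{A\times Z_s}\cong Z_{rds}$ must have weight sequence of the shape (scalar) $\times$ (an $N$-extremal sequence over $\widehat A$), so that $\beta_k(G)=s\cdot\beta_k(N)$ whenever $\beta_k(N)$ is achieved by a sequence of the predicted form $(0,1^{rdk})$. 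So the crux is the case $s=1$: show $\beta_k(N)=2^nrk+1$ for $n\ge 2$ (equivalently $d\ge 4$), and $\beta_k(Z_r\rtimes_{-1}Z_4)=2rk+2$ for $n=1$.

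For the main case $d=2^n\ge 4$: the lower bound $\beta_k(N)\ge rdk+1$ is immediate from Theorem~\ref{thm:lower} applied to $A\triangleleft N$ with $N/A\cong Z_2$, since $\D_k(Z_{rd})=rdk$ and $\D(Z_2)=2$. For the upper bound I would argue by contradiction: suppose $\beta_k(N)>rdk+1$, i.e.\ there is a $k$-extremal sequence $S$ over $\widehat A\cong Z_{rd}$ of length $\beta_k(N)\ge rdk+2$. By Proposition~\ref{prop:kontra}, every $\widehat{A/C}=Z_r$-contraction of every $S'\sim S$ is a $k$-extremal sequence over $\widehat{N/C}=\widehat{D_{2r}}\cong Z_r$; by Proposition~\ref{extrem_Dn} every such contraction is similar to $(0,a^{rk})$ for a generator $a$. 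Now Lemma~\ref{lemma:kontra} — whose hypotheses $n\ge 3$, $r\ge 3$, $\gcd(r,2n)=1$ are satisfied with the roles $n\rightsquigarrow d$, $r\rightsquigarrow r$ (note $d=2^n\ge 4$ so $d\ge 3$, and $\gcd(r,2d)=1$ since $r,s$ are odd and $\gcd(r,2^{n+1})=1$) — forces $S$ itself to be similar to $(0,1^{rdk})$, which has length exactly $rdk+1$, contradicting $|S|\ge rdk+2$. Hence $\beta_k(N)=rdk+1$, and then the product argument above gives $\beta_k(G)=srdk+1=2^nsrk+1$.

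The exceptional case $s=n=1$, i.e.\ $G=Z_r\rtimes_{-1}Z_4$ (here $d=2$): now Lemma~\ref{lemma:kontra} is \emph{not} available since it needs $d=2^n\ge 3$, which is exactly why the extra $+1$ appears. Here I would compute directly. The upper bound $\beta_k(G)\le 2rk+2$ should come from Lemma~\ref{lemma:reduction}(iii) applied to $C\cong Z_2$ with $G/C\cong D_{2r}$: $\beta_k(G,V)\le\beta_{\beta_k(D_{2r})}(C,V)=\beta_{rk+1}(Z_2,V)\le\beta_{rk+1}(Z_2)=\D_{rk+1}(Z_2)=2(rk+1)=2rk+2$. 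For the matching lower bound $\beta_k(G)\ge 2rk+2$ one exhibits an explicit extremal invariant; the natural candidate is built from the Theorem~\ref{thm:lower} construction applied to $A\cong Z_{2r}\triangleleft G$ (giving $\ge\D_k(Z_{2r})+\D(Z_2)-1=2rk+1$) and then pushing one degree higher using the fact that $C\cong Z_2$ sits inside $G$ with $G/C\cong D_{2r}$ — concretely, realize a zero-sum sequence over $\widehat A\cong Z_{2r}$ of the form $(0,1^{2rk},r)$ (length $2rk+2$, where $r\in\widehat A$ is the order-two element, i.e.\ the one pulled back from $\widehat C$) and verify its transfer is not in $R_+^{k+1}$, mimicking the bookkeeping in the proof of Proposition~\ref{extrem_Dn}.

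\textbf{Main obstacle.} The hardest part is making the reduction from $G=Z_s\times N$ to $N$ fully rigorous — i.e.\ justifying that the $Z_s$-factor contributes exactly a multiplicative factor of $s$ to the Noether number and does not create new $k$-extremal weight patterns. One must check that $\widehat{A\times Z_s}\cong Z_{rds}$ and that, because $\gcd(s,rd)=1$, every $k$-extremal sequence over this group projects to a $k$-extremal sequence over $\widehat A$ in the $N$-sense after dividing out the $Z_s$-part, so that the classification from Lemma~\ref{lemma:kontra} transfers. The $n=1$ lower bound (the explicit extremal invariant of degree $2rk+2$) is the other delicate point, since unlike the generic case there is no clean combinatorial lemma to quote and one must verify non-membership in $R_+^{k+1}$ by hand.
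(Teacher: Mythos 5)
Your handling of the two cases you do treat follows the paper's route: for $s=1$, $n\ge 2$ the chain Proposition~\ref{prop:kontra} $\to$ Proposition~\ref{extrem_Dn} $\to$ Lemma~\ref{lemma:kontra} forces any $k$-extremal sequence to be similar to $(0,1^{rdk})$ and hence to have length exactly $rdk+1$; for $s=n=1$ the upper bound $\beta_k(G)\le\beta_{\beta_k(D_{2r})}(Z_2)=2rk+2$ is the paper's, and the lower bound is an explicit invariant. The genuine gap is your reduction from $G=Z_s\times N$ to $N=Z_r\rtimes_{-1}Z_{2^{n+1}}$. The formula $\beta_k(G)=s\cdot\beta_k(N)$ is false on its face (it would give $2^nsrk+s$ rather than $2^nsrk+1$), and the case $n=1$, $s>1$ falls through entirely: there $\beta_k(N)=2rk+2$ while the theorem asserts $\beta_k(G)=2srk+1$, and neither $s\beta_k(N)$ nor the bound $\beta_k(G)\le\beta_{sk}(N)=2srk+2$ from Lemma~\ref{lemma:reduction} reaches the sharp value, so no argument that first isolates $N$ and then accounts for $Z_s$ afterwards can succeed. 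The paper never separates the $Z_s$-factor: it takes $A\cong Z_{2^nsr}$ as the index-two abelian normal subgroup of $G$ and contracts with respect to the center $C\cong Z_{2^ns}$, so that $G/C\cong D_{2r}$ and Lemma~\ref{lemma:kontra} is invoked with modulus $2^ns$ in place of your $d=2^n$. Its hypothesis then reads $2^ns\ge 3$, which holds for every $(n,s)\neq(1,1)$ --- in particular for $n=1$, $s\ge 3$ --- and this is precisely what delimits the exceptional case.

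A smaller but real defect: your candidate extremal weight sequence $(0,1^{2rk},r)$ for the lower bound in the case $s=n=1$ is not a zero-sum sequence over $\hat A\cong Z_{2r}$ (its sum is $r$), so it does not correspond to an $A$-invariant monomial and cannot be fed to $\tau^G_A$. The paper instead works with the two-dimensional representation \eqref{Qrep}, where $\omega$ is a primitive $2r$-th root of unity, exhibits the generators $(xy)^2$, $\tau^G_A(x^{2r})$, $\tau^G_A(x^{2r+1}y)$ of $R=\F[V]^G$, and deduces that $\tau^G_A(x^{2rk+1}y)$ --- of weight sequence $(1^{2rk+1},-1)$ and degree $2rk+2$ --- lies outside $R_+^{k+1}$ because every element of $R_+^{k+1}$ of that degree is divisible by $(xy)^2$. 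Some such explicit verification is unavoidable here, and your sketch does not yet supply a workable candidate.
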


\begin{proof} 
$\beta_k(G)$ is the length of a sequence $S$ over $A:= Z_{2^nsr}$ which is $k$-extremal with respect to $\tau_A^G$. 
By Proposition~\ref{prop:kontra} any $Z_{r}$-contraction of any sequence equivalent to $S$ must be $k$-extremal with respect to $\tau_{Z_{r}}^{D_{2r}}$,
hence it is similar to $(0,(2^ns)^{rk})$ by Proposition~\ref{extrem_Dn}. Therefore $S$  is similar to $(0,1^{2^nsrk})$ by Lemma~\ref{lemma:kontra},
provided that $2^n s\ge 3$; in particular $\beta_k(G) = |S| =  2^nsr k + 1$. 

For the case $s=n=1$ we have $\beta_k(Z_r\rtimes_{-1}Z_4)\le 2\beta_{k}(D_{2r})=2r+2 $
by Lemma~\ref{lemma:reduction} and Corollary~\ref{cor:dihedralbeta}.
To see the reverse inequality consider the representation on $V=\F^2$ of $G:=\bra a,b\ket $ given by the matrices 
\begin{align}\label{Qrep}
 a\mapsto \left(\begin{array}{cc}\omega & 0 \\0 & \omega^{-1}\end{array}\right) \qquad
b\mapsto  \left(\begin{array}{cc}0 & i \\i & 0\end{array}\right)
\end{align}
where $\omega$ is a primitive $2r$-th root of unity  and $i = \sqrt{-1}$  a primitive fourth root of unity.
Then $\F[V] =\F[x,y]$ where $x,y$ are the usual coordinate functions on $\F^2$.  Obviously $(xy)^2$ is invariant under $a$ and $b$ alike; from this it is easily seen that $R=\F[V]^{G}$ is generated by 
$(xy)^2$, $\tau^G_A(x^{2r})$ and $\tau^G_A(x^{2r+1}y)$. This shows that any element of $R_+^{k+1}$ not divisible by $(xy)^2$
must have degree at least $2r(k+1)$. As a result $(R_+^{k+1})_{2rk + 2} \subseteq \bra (xy)^2 \ket$. 
The invariant $\tau^G_A(x^{2rk+1} y) \in R_+$ of degree $2rk + 2$  does not belong to the ideal $\bra (xy)^2 \ket$
and this proves  that $\beta_k(G) \ge 2rk + 2$.
\end{proof}


\section{The quaternion group}\label{sec:quaternion} 

The dicyclic group $Dic_{4n}$ is defined for any $n>1$ by the presentation
\[ Dic_{4n} = \bra a,b: a^{2n}=1, b^2 = a^n, bab^{-1} = a^{-1} \ket \]
In particular for $n=2$ we retrieve the quaternion group $Q = Dic_8$. 
The equality $\beta(Q)=6$ for $\F=\mathbb{C}$ was proved in \cite{schmid}.

\begin{proposition}\label{prop:betadiciklikus}  
We have $\beta_k(Dic_{4n})=2nk+2$ for  $n>1$ even  and $k\ge 1$. 
Moreover if $(r,4n) =1$  then $1 \le \beta_k(Z_r\times Dic_{4n}) - 2nrk \le 2$.
\end{proposition}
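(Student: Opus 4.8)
The statement has two parts, and I would treat them separately but with a common strategy: reduce to facts about the dihedral group via the contraction method of Section~\ref{sec:kontrakcio} and the monotonicity/transfer lemmata of Section~\ref{sec:prel}.

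For the first part, $\beta_k(Dic_{4n})=2nk+2$ with $n>1$ even, the upper bound should follow quickly from Lemma~\ref{lemma:reduction}: $Dic_{4n}$ has the cyclic normal subgroup $A=\bra a\ket\cong Z_{2n}$ with factor $Z_2$, and $Dic_{4n}/Z(Dic_{4n})\cong D_{2n}$. Applying Lemma~\ref{lemma:reduction}(iii) with the center $C=\bra a^n\ket\cong Z_2$ together with Corollary~\ref{cor:dihedralbeta} gives $\beta_k(Dic_{4n})\le\beta_{\beta_k(D_{2n})}(Z_2)=2(nk+1)$, which is exactly $2nk+2$. For the matching lower bound I would exhibit an explicit extremal invariant. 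The natural candidate is the analogue of the degree-$6$ invariant witnessing $\beta(Q)=6$: take the standard $2$-dimensional representation of $Dic_{4n}$ where $a$ acts diagonally by a primitive $2n$-th root of unity and $b$ acts by an off-diagonal matrix with entries scaling by a fourth root of unity, so that $\F[V]=\F[x,y]$, $(xy)^2$ is $G$-invariant, and $R=\F[V]^G$ is generated by $(xy)^2$, $\tau^G_A(x^{2n})$ and $\tau^G_A(x^{2n+1}y)$ — mirroring the computation already done for $Z_r\rtimes_{-1}Z_4$ in the proof of Theorem~\ref{thm:G_1}. Then $\tau^G_A(x^{2nk+1}y)$ has degree $2nk+2$, lies in $R_+$, and is not in $\bra(xy)^2\ket$, while every element of $R_+^{k+1}$ not divisible by $(xy)^2$ has degree $\ge 2n(k+1)>2nk+2$; hence $\tau^G_A(x^{2nk+1}y)\notin R_+^{k+1}$ and $\beta_k(Dic_{4n})\ge 2nk+2$. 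The hypothesis that $n$ is even is presumably what makes $\tau^G_A(x^{2n})\ne0$ (an oddness phenomenon for the transfer), so I would check the parity carefully there — that is the one genuinely delicate point in part one.

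For the second part, $1\le\beta_k(Z_r\times Dic_{4n})-2nrk\le2$, set $G=Z_r\times Dic_{4n}$, which has the abelian normal subgroup $A\cong Z_{2nr}$ with $G/A\cong Z_2$, and the central subgroup $C\cong Z_r$ (using $\gcd(r,4n)=1$) with $G/C\cong Dic_{4n}$. The lower bound $\beta_k(G)\ge 2nrk+1$ is immediate from Theorem~\ref{thm:lower} applied to $A\triangleleft G$: $\beta_k(G)\ge\beta_k(A)+\D(G/A)-1=\D_k(Z_{2nr})+1=2nrk+1$. For the upper bound $\beta_k(G)\le 2nrk+2$, I would argue that any $k$-extremal sequence $S$ over $\hat A$ has length $\le 2nrk+2$. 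By Proposition~\ref{prop:kontra} (whose proof goes through verbatim for this $G$, since $G/C\cong Dic_{4n}$ and $G/A\cong Z_2$ with the same $\sim$-relation based on zero-sum sequences of length $\le r$ — here one must check that the proof of Proposition~\ref{prop:kontra} only used the dihedral-type structure modulo the center and the bound $\beta_k(G)\le\beta_{\beta_k(G/C)}(C)$, which now reads $\beta_k(G)\le\beta_{2nk+2}(Z_r)=(2nk+2)r$), any $Z_r$-contraction of any $S'\sim S$ is $k$-extremal for $\tau_{Z_r}^{Dic_{4n}}$, hence has length $\le\beta_k(Dic_{4n})=2nk+2$ by the first part. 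Since a $Z_r$-contraction of $S$ has length $\ge|S|/r$ (each block $S_i$ has $|S_i/Z_r|\le\D(A/C)=\D(Z_r)=r$, wait — more precisely $|S_i|\ge1$ and there are at most $|S|$ blocks, but the relevant bound is that the contraction length times the maximal block length is at least $|S|$, and blocks projecting to irreducible zero-sum sequences over $Z_r$ have length... ), I get $|S|\le r\cdot(2nk+2)$, which is too weak; so I would instead mimic Lemma~\ref{lemma:kontra} more carefully, using that the contraction being forced into the shape $(0,(2n)^{rk})$ or $((2n)^{rk+1})$-type sequences (the two extremal shapes for $Dic_{4n}$, by the first part's analogue of Proposition~\ref{extrem_Dn}) pins down $S$ up to similarity to a sequence of length $2nrk+1$ or $2nrk+2$.

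\textbf{Main obstacle.} The hard part will be the upper bound in the second statement: one needs the analogue of Proposition~\ref{extrem_Dn} describing the $k$-extremal sequences for $\tau_{Z_r}^{Dic_{4n}}$ — i.e. a classification of the degree-$(2nk+2)$ extremal monomials for the dicyclic group — and then a contraction-uniqueness argument in the spirit of Lemma~\ref{lemma:kontra} to lift that classification through $C=Z_r$. The dicyclic case is genuinely harder than the dihedral one because the extremal invariant is a transfer of a monomial of the form $x^{2nk+1}y$ rather than $x^{2nk}$, so the extremal weight sequence is not simply $(0,a^{2nrk})$; this is exactly why the answer is only pinned down to within an additive constant $\{1,2\}$ rather than exactly, and reproducing that dichotomy rigorously — showing it cannot drop to $2nrk+1$ in general but also cannot exceed $2nrk+2$ — is where the real work lies. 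I would expect to need the zero-sum results of Section~\ref{sec:zero-sum} (Lemmata~\ref{Zn_extrem1} and~\ref{Zn_extrem2}) in a form adapted to the two competing extremal shapes.
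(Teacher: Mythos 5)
Your treatment of the first statement and of both lower bounds coincides with the paper's: the explicit two-dimensional representation \eqref{Qrep} with $\omega$ a primitive $2n$-th root of unity gives $\beta_k(Dic_{4n})\ge 2nk+2$ exactly as in the proof of Theorem~\ref{thm:G_1}, and Theorem~\ref{thm:lower} gives $\beta_k(Z_r\times Dic_{4n})\ge 2nrk+1$. (Your worry about the parity of $n$ is unfounded: $\tau^G_A(x^{2n})=x^{2n}+(-1)^ny^{2n}$ is nonzero for every $n$, and the argument needs only $n>1$ so that $2nk+2<2n(k+1)$; the hypothesis ``$n$ even'' is there because for odd $n$ one has $Dic_{4n}\cong Z_n\rtimes_{-1}Z_4$, which is already covered by Theorem~\ref{thm:G_1}.) Likewise your upper bound $\beta_k(Dic_{4n})\le\beta_{\beta_k(D_{2n})}(Z_2)=2(nk+1)$ is the paper's.

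The genuine gap is the upper bound $\beta_k(Z_r\times Dic_{4n})\le 2nrk+2$. Your plan for it --- classify the $k$-extremal sequences for $\tau_{Z_r}^{Dic_{4n}}$ (a dicyclic analogue of Proposition~\ref{extrem_Dn}) and then run a contraction-uniqueness argument in the style of Lemma~\ref{lemma:kontra} --- is not carried out; you yourself note that the first estimate it yields, $|S|\le r(2nk+2)$, is too weak, and what you call ``where the real work lies'' is work this statement does not require. The bound follows in one line from the very reduction you already used for $r=1$: the subgroup $C=Z(Dic_{4n})\cong Z_2$ is central, hence normal, in $G=Z_r\times Dic_{4n}$, with $G/C\cong Z_r\times D_{2n}$ (resp.\ $Z_{2r}\times Z_2$ when $n=2$), so Lemma~\ref{lemma:reduction}(iii) gives $\beta_k(G)\le\beta_{\beta_k(Z_r\times D_{2n})}(Z_2)$; since $\beta_k(Z_r\times D_{2n})\le\beta_{rk}(D_{2n})=nrk+1$ by Lemma~\ref{lemma:reduction}(ii) and Corollary~\ref{cor:dihedralbeta}, and $\beta_j(Z_2)=\D_j(Z_2)=2j$, this yields $\beta_k(G)\le 2(nrk+1)=2nrk+2$. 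The finer analysis you envisage (deciding whether the answer is $2nrk+1$ or $2nrk+2$) is indeed the hard part, but it is deliberately postponed in the paper to Theorem~\ref{thm: beta Zp Q} and is not claimed by this proposition.
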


\begin{proof} Taking $\omega$ a primitive $2n$-th root of unity  in \eqref{Qrep}, the same argument as in 
 the proof of Theorem~\ref{thm:G_1} shows that  $\beta_k(Dic_{4n}) \ge 2nk + 2$.
Moreover for $G:=Z_r\times Dic_{4n}$ we have $\beta_k(G)\ge 2rnk +1$ by Theorem~\ref{thm:lower}. 
Observe that $G/Z(Dic_{4n})$ is isomorphic to $Z_r\times D_{2n}$, respectively to $Z_{2r} \times Z_2$ for $n=2$. 
Combining Lemma~\ref{lemma:reduction}  with Corollary~\ref{cor:dihedralbeta} 
leads to the inequality $\beta_k(G) \le  2nrk+2$.
\end{proof}

\begin{proposition} \label{Q extrem}
Let  $Q = \bra a,b\ket$ be the quaternion group where $A:=\bra a \ket$ is isomorphic to $  Z_4$. 
If $S$ is a zero-sum sequence over $\hat A$ of length $4k+2$ which is $k$-extremal with respect to $\tau^Q_A$ 
then  $S=(1^{t}, 3^{s})$ where  $t \neq s$. 
\end{proposition}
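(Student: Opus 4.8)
The plan is to combine the reduction lemmata of Section~\ref{sec:prel} with the structure of the transfer $\tau^Q_A$ and a few explicit identities in the polynomial ring of the $2$‑dimensional irreducible of $Q$. Since $\beta_k(Q)=4k+2$ and $\beta_{k-1}(Q)=4k-2$ by Proposition~\ref{prop:betadiciklikus}, I start from a monomial $m\in I:=\F[V]^A$ of degree $4k+2$ with $\Phi(m)=S$ and $\tau^Q_A(m)\notin R_+^{k+1}$, where $R:=\F[V]^Q$. Every $Q$‑module is a sum of the four $1$‑dimensional characters (on which $a$ acts by $\pm1$, i.e.\ with weight $0$ or $2$) and copies of the unique $2$‑dimensional irreducible (on which $a$ has eigenvalues $\zeta^{\pm1}$, i.e.\ weights $1$ and $3$). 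Mixing basis vectors inside each weight space changes $m$ into a sum of monomials of the \emph{same} weight sequence $S$, at least one of which still has transfer outside $R_+^{k+1}$; so I may assume each weight‑$1$ variable $u$ is matched with $v:=u^b$ of weight $3$ (so $v^b=u^{a^2}=-u$), and each weight‑$0$ or weight‑$2$ variable $w$ is a $b$‑eigenvector, whence $w^b=\pm w$ and $w^2\in R$.

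The key elementary reduction is: \emph{if $m$ is divisible by a $Q$‑invariant monomial $g$ of degree $\le3$, then $\tau^Q_A(m)\in R_+^{k+1}$}. Writing $m=gm'$ with $m'\in I$ of degree $\ge4k-1$, this is immediate for $k=1$, and for $k\ge2$ it follows from $\tau^Q_A(I_d)\subseteq R_d\subseteq R_+^k$ for every $d>\beta_{k-1}(Q)$. Applying it to $w^2$ for any even‑weight $w$, to products $ww'$ of two variables of the same weight and the same $b$‑sign, and to degree‑$1$ invariants, I conclude that $m$ is divisible by at most one weight‑$0$ variable $x$ (necessarily $x^b=-x$) and at most one weight‑$2$ variable $z$ (necessarily $z^b=-z$). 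On the other hand, reducing $\theta(S)=0$ modulo $2$ shows that $S$ has an even number of even‑weight entries. Combining the two, either $S$ has no even‑weight entry, so $S=(1^t,3^s)$, or $S$ contains exactly one $0$ and one $2$.

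It remains to prove $t\ne s$ in the first case and to rule out $S_{\mathrm{ev}}=(0,2)$. The former is easy: $t=s$ would force $t=s=2k+1$, and pairing each weight‑$1$ variable of $m$ with a weight‑$3$ variable exhibits $m$ as a product of $2k+1$ degree‑$2$ elements of $I_+$, so $m\in I_+^{2k+1}\subseteq I_+R_+^k$ by iterating $(I_+)^2\subseteq I_+R_++R_+$ of Lemma~\ref{lemma:reduction}(i); hence $\tau^Q_A(m)\in R_+^{k+1}$, a contradiction. For $S_{\mathrm{ev}}=(0,2)$ I write $m=xz\,M$ with $M=\prod_iu_i^{\alpha_i}v_i^{\beta_i}$, $a=\sum\alpha_i$, $b=\sum\beta_i$, $a+b=4k$; $A$‑invariance forces $a-b\equiv2\pmod4$, so $a,b$ are odd (say $a>b$; the case $b>a$ is symmetric under $u\leftrightarrow v$). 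Since $b$ is odd, $m^b=-xz\,\overline M$ with $\overline M=\prod_iu_i^{\beta_i}v_i^{\alpha_i}$, hence
\[\tau^Q_A(m)=xz\,(M-\overline M).\]
The plan is to factor $M-\overline M$: after pulling out $\prod_i(u_iv_i)^{\min(\alpha_i,\beta_i)}$ one is left with a difference of two coprime monomials of equal even degree; peeling off the $u^2-v^2$‑type factor and multiplying the remainder by $xz$ produces a transfer $\tau^Q_A(\widetilde m)$ of a monomial $\widetilde m$ whose weight sequence visibly splits into $2(k-t)+1$ zero‑sum blocks, where $t:=(a-b-2)/4\in\{0,\dots,k-1\}$, so that $\widetilde m\in I_+^{2(k-t)+1}\subseteq I_+R_+^{k-t}$ and $\tau^Q_A(\widetilde m)\in R_+^{k-t+1}$; and, on the other hand, a genuine $Q$‑invariant factor equal to a product of $t$ cyclotomic‑type expressions $(u_i^4+v_i^4)-\lambda(u_iv_i)^2\in R_+$, hence lying in $R_+^t$. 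Multiplying, $\tau^Q_A(m)\in R_+^{k-t+1}R_+^{t}=R_+^{k+1}$, the desired contradiction.

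I expect the main obstacle to be exactly this last factorization. For $|a-b|=2$ (i.e.\ $t=0$) the weight sequence of $m$ itself already splits into $2k+1$ zero‑sum blocks and one is back in the $t\ne s$ situation; but for $|a-b|\ge6$ the combinatorial number of blocks is strictly smaller than $2k+1$, and one must use the explicit identity $\prod_{\zeta^{2t+1}=1,\ \zeta\ne1}(u^2-\zeta v^2)=\prod_{\{\zeta,\zeta^{-1}\}}\bigl((u^4+v^4)-(\zeta+\zeta^{-1})(uv)^2\bigr)$ together with a telescoping over the copies of the $2$‑dimensional irreducible that reduces the general monomial $M$ to this one‑variable case, while keeping exact track of how the powers of $R_+$ accumulate. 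That bookkeeping is the delicate point; the remaining steps (the small‑divisor reduction, the parity count, and the $t\ne s$ case) are routine applications of the reduction lemmata.
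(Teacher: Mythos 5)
Your opening reductions are sound (passing to a basis adapted to the isotypic decomposition, the observation that a $Q$-invariant divisor $g$ of degree $\le 3$ forces $\tau^Q_A(m)=g\,\tau^Q_A(m')\in R_+R_+^{k}$ since $\deg(m')>\beta_{k-1}(Q)$, and the parity count showing the number of even-weight entries is even), but there are two genuine gaps. First, the step ``at most one weight-$2$ variable (necessarily $z^b=-z$)'' does not follow from the small-divisor reduction: a weight-$2$ variable may satisfy $z^b=+z$ as well as $z^b=-z$ (both characters of $Q/[Q,Q]$ sending $a\mapsto -1$ occur), and if $m$ is divisible by two weight-$2$ variables with \emph{opposite} $b$-eigenvalues, their product is an $A$-invariant that $b$ negates, hence not $Q$-invariant; no invariant divisor of degree $\le 3$ arises unless a weight-$0$ variable happens to be present. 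So the configuration $S=(2,2,1^t,3^s)$ survives both your divisor argument and your parity argument. The paper sidesteps this by arguing only with the $A$-weight structure: every zero-sum block of $\Phi(m)$ over $Z_4$ (e.g. the pair $(2,2)$, whatever the $b$-eigenvalues) contributes a factor in $I_+$, and as soon as $\Phi(m)$ splits into $2k+1$ non-empty zero-sum blocks one gets $m\in I_+^{2k+1}\subseteq I_+R_+^{k}$ by Lemma~\ref{lemma:reduction}(i), hence $\tau^Q_A(m)\in R_+^{k+1}$.

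Second, and more seriously, your exclusion of $S_{\mathrm{ev}}=(0,2)$ is a plan rather than a proof, and the plan is unlikely to close as stated. The identity $\tau^Q_A(m)=xz(M-\overline M)$ is correct, but membership in $R_+^{k+1}$ is a statement about sums of products of $k+1$ positive-degree invariants, not about polynomial factorizations; after removing the gcd, a difference of two coprime monomials in several pairs $u_i,v_i$ does not factor into ``cyclotomic-type'' invariant pieces, and you yourself flag the telescoping and the bookkeeping of powers of $R_+$ as unresolved. The paper's route here is different and complete: it inducts on $t-s$ using $\tau(uv)=\tau(u)\tau(v)-\tau(u^bv)$ with $\Phi(u)=(2,1,1)$ or $(1,1,1,1)$, combined with the block-count criterion above. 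In the base case $t-s\le 4$ the presence of a $0$ and a $2$ already forces $\Phi(m)=(0)(2,1,1)(1,3)^{s}$ with $s=2k-1$, i.e. $2k+1$ blocks; in the inductive step both $\Phi(v)$ and $\Phi(u^bv)$ fall under the induction hypothesis and neither can contain a $0$ or a $2$. You would need to replace your last two steps by an argument of this kind.
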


\begin{proof} Set $I = \F[V]^{A}$, $R= \F[V]^Q$ and
let $S = (0^x, 2^y, 1^t,3^s)$ be the weight sequence of a monomial $m \in I$ of degree $4k+2$ such that $\tau(m) \not\in R_+^{k+1}$. 
By replacing $m$ with $m^b$, if needed, we may suppose that $t \ge s$. 
We will use induction on $t-s$.
Suppose first that $t-s \le 4$ and consider the factorization $S=(22)^{\lfloor y/2 \rfloor} (13)^s (0)^x T$.
If $y$ is odd then necessarily $T=(211)$ and $x \ge 1$,
hence $m \in I_+^{2k+1}$, which is a contradiction by Lemma~\ref{lemma:reduction}.
If however $y$ is even then either $T$ is empty, and then $m \in I_+^{2k+1}$ again, 
or else $T=(1111)$; in this later case if $x \ge 2$ then again $m \in I_+^{2k+1}$ or
otherwise, taking into account that $|S|$ is even, it remains that $x=0$ and $S=(2^y, 1^{s+4} ,3^s)$.
Now, if $y>0$ then take a factorization $m=uv$ such that $\Phi(u) = (211)$ and observe that
$\tau(m) = \tau(u)\tau(v) - \tau(u^bv) \in R_+^{k+1}$, because on the one hand $\deg(v) = 4k-1 > \beta_{k-1}(Q)$,
while on the other hand $\Phi(u^bv) =(2^y)(13)^{s+2}$, hence $u^bv \in I_+^{2k+1}$ by what has been said before. 
From this contradiction we conclude that $y=0$ and  $S=(1^{s+4},3^s)$
whenever $t-s \le 4$ holds. 

Finally, if $t-s> 4$ we have a factorization $m=uv$ with $\Phi(u)=(1111)$,
and since $\tau(m) = \tau(u)\tau(v) - \tau(u^bv) \not\in R_+^{k+1}$ by assumption,
it is necessary that either $\tau(v) \not \in R_+^k$, when   $\Phi(v) = (1^{t-4}, 3^s)$  by the induction hypothesis, 
or $\tau(u^bv) \not\in R_+^k$, when similarly $\Phi(u^bv) = (1^{t-4}, 3^{s+4})$, 
and in both cases $\Phi(m) = (1^t, 3^s)$, as claimed. 
\end{proof}

\begin{theorem} \label{thm: beta Zp Q}
Let $G =Z_p \times Q$ for an odd prime $p$. 
Then $\beta_k(G) = 4p k +1$ for every $k \ge 1$.
\end{theorem}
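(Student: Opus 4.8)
The plan is to combine the lower bound coming from Theorem~\ref{thm:lower} with an upper bound obtained by the same contraction method used in Theorem~\ref{thm:G_1}, now applied to the central subgroup $C=Z(Q)\cong Z_2$. First I would establish the lower bound: taking $H=Z_p\times A$ where $A=\langle a\rangle\cong Z_4$ is the cyclic index-two subgroup of $Q$, we have $H\cong Z_{4p}$, $G/H\cong Z_2$, and $\D(G/H)=2$, so Theorem~\ref{thm:lower} gives $\beta_k(G)\ge \beta_k(Z_{4p})+1=\D_k(Z_{4p})+1=4pk+1$.

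For the upper bound, $\beta_k(G)$ is the length of a zero-sum sequence $S$ over $\hat A$ (where now $A=Z_p\times\langle a\rangle\cong Z_{4p}$ is the abelian normal subgroup of index two in $G$) that is $k$-extremal with respect to $\tau_A^G$. Set $C=Z(Q)\cong Z_2$; then $C\triangleleft G$, $A/C\cong Z_{2p}$, and $G/C\cong Z_p\times D_4\cong Z_{2p}\times Z_2$, which is abelian. The key point will be to show that every $k$-extremal sequence $S$ over $\hat A$ has length at most $4pk+1$, which forces equality. To do this I would prove the analogue of Proposition~\ref{prop:kontra} in this setting: using Lemma~\ref{resolution} together with the identity $\tau\pi=\pi\tilde\tau$, any $\widehat{A/C}$-contraction of any sequence $S'\sim S$ is $k$-extremal with respect to $\tau_{A/C}^{G/C}$ (here $\sim$ allows swapping a zero-sum subsequence of length $\le 2$ by its $b$-conjugate, since $|C|=2$, i.e. $n=2$ in the notation of Section~\ref{sec:kontrakcio}). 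The length-comparison step works exactly as before: $|\tilde S|\ge \tfrac12|S|$, and $|S|=\beta_k(G)\ge 4pk+1$ by the lower bound just proved, so $|\tilde S|\ge 2pk+1=\D_k(Z_{2p})+1=\beta_k(Z_{2p}\times Z_2)$ because $Z_{2p}\times Z_2\cong Z_2\times D_{2\cdot 2p}$ is of dihedral type (Corollary~\ref{cor:dihedralbeta} with $n=2p$, using that $|G/C|$ is invertible).

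Next I would use Proposition~\ref{Q extrem} to pin down the shape of a $k$-extremal sequence. However, Proposition~\ref{Q extrem} concerns $Q$ itself, not $G/C\cong Z_{2p}\times Z_2$, so the correct move is instead to invoke the description of extremal invariants for the relevant dihedral-type quotient: since $G/C$ is of the form $Z_{2p}\times Z_2$ and $A/C\cong Z_{2p}$ sits inside it as an index-two subgroup with $G/C=(A/C)\rtimes_{-1}Z_2$ (the $Z_2$ acting by inversion, as $b$ inverts $a$ and fixes $Z_p$ up to the point that on $Z_{2p}$ it acts by $-1$), Proposition~\ref{extrem_Dn} applies: every $k$-extremal sequence over $\widehat{A/C}\cong Z_{2p}$ with respect to $\tau_{A/C}^{G/C}$ is similar to $(0,c^{2pk})$ for a generator $c$. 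Combining this with the contraction statement, every $\widehat{A/C}$-contraction of every $S'\sim S$ is similar to $(0,c^{2pk})$. Then a version of Lemma~\ref{lemma:kontra} — with $r$ replaced by $p$, $n$ replaced by $2$ — shows that $S$ itself is similar to $(0,1^{4pk})$, giving $|S|=4pk+1$, hence $\beta_k(G)\le 4pk+1$.

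The main obstacle I anticipate is verifying that Lemma~\ref{lemma:kontra} genuinely applies here, or adapting its proof: that lemma is stated for $Z_r\rtimes_{-1}Z_{2n}$ with $n\ge 3$, whereas we need $n=2$, so $C=Z_2$ is small and the relation $\sim$ only allows swapping zero-sum subsequences of length $\le 2$. I would need to check that the combinatorial argument in Lemma~\ref{lemma:kontra} (the repeated "swap a subsequence $U\subseteq T_1$ of prescribed length" maneuver and the uniqueness arguments built on $l\ge 4$, which requires $pk\ge 2$, hence $p\ge 3$ as assumed) survives with $n=2$; in particular the step using $f^b\ne\pm f$ needs $n>2$, so the $n=2$ case must be handled by a separate, slightly finer argument exploiting that $x^b=-x$ has no nontrivial solutions in $Z_{2p}$ beyond the $2$-torsion, or more simply by first reducing modulo $Z_p$ instead of modulo $C$ to break symmetry. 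An alternative to circumvent this entirely is to run the contraction with $C=Z_p$ (the odd part), landing in $G/C\cong Q$, where Proposition~\ref{Q extrem} directly gives that every $k$-extremal sequence over $Z_4$ is $(1^t,3^s)$ with $t\ne s$; then a short direct argument on lifts from $\hat A=Z_{4p}$ — using that $p$ is odd and coprime to $4$ — forces $S$ similar to $(0,1^{4pk})$ up to the extremality constraint, and the length count still yields $4pk+1$. I expect one of these two routes to close cleanly, with the bookkeeping of the $\sim$-relation and the parity constraints being the only delicate part.
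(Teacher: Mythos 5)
Your lower bound is fine (Theorem~\ref{thm:lower} with $H=Z_{4p}$, or equivalently Proposition~\ref{prop:betadiciklikus}), but both of your proposed routes to the upper bound break at a concrete step. Route 1: with $C=Z(Q)=\langle a^2\rangle$ the quotient $G/C\cong Z_p\times Z_2\times Z_2$ is \emph{abelian} --- $b$ acts trivially on $A/C$ because $a^{-1}\equiv a\pmod{\langle a^2\rangle}$ --- so it is not $D_{4p}$, your isomorphism $Z_{2p}\times Z_2\cong Z_2\times D_{4p}$ is false (wrong order and wrong commutativity), and Proposition~\ref{extrem_Dn}, which is stated for the nonabelian dihedral groups $D_{2n}$ acting by genuine inversion, does not apply; moreover $\beta_k(Z_{2p}\times Z_2)=\D_k(Z_2\oplus Z_{2p})$, not $\D_k(Z_{2p})+1$. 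Route 2: with $C=Z_p$ each block of an $\widehat{A/C}$-contraction has length at most $\D(Z_p)=p$, so a contraction $\tilde S$ of a putative sequence $S$ of length $4pk+2$ only satisfies $|\tilde S|\ge 4k+1$, which is strictly less than $\beta_k(Q)=4k+2$ (e.g.\ $p=3$, $k=1$ gives $|\tilde S|\ge 5<6$). The key step of Proposition~\ref{prop:kontra} --- that a non-$k$-extremal contraction of length at least $\beta_k(G/C)$ forces $\tilde\tau(\tilde m)$ into $(\F[U]^{G/C}_+)^{k+1}$ --- therefore yields nothing, and no analogue of Lemma~\ref{lemma:kontra} can be invoked.

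The paper avoids contraction entirely: writing $A=C\times B$ with $B=\langle a\rangle$, it \emph{restricts} the weights to $B$ (which preserves the length $4pk+2$) and uses that $\tau^Q_B$ agrees with $\tau^G_A$ on $\F[V]^A$, so that $\Phi(m)|_B$ must be $kp$-extremal over $\hat B\cong Z_4$ --- note the parameter is $kp$, not $k$, since $4pk+2=\beta_{kp}(Q)$ --- whence $\Phi(m)|_B=(1^t,3^s)$ by Proposition~\ref{Q extrem}; making this implication rigorous already needs the transfer identity $f=\frac1{[G:B]}\tau^G_A(\tau^A_B(f))$. After that comes the real work, which your ``short direct argument on lifts'' glosses over: one factors $m=m_1\cdots m_l$ into blocks with $B$-weights $(1,3)$ or $(1^4)$, studies the sequence of $Z_p$-weights of the blocks, and by induction on $(t-s)/4$ produces in each of several cases a bipartition $m=uv$ with $u,v$ both $A$-invariant, $\deg v>\beta_{k-1}(G)$ and $\tau(u^bv)\in R_+^{k+1}$, so that $\tau(m)=\tau(u)\tau(v)-\tau(u^bv)\in R_+^{k+1}$, the desired contradiction. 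This case analysis (including the degenerate configuration $l=p+1$, $s=1$, $\theta(m_1)=0$) is the substance of the proof and is absent from your proposal.
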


\begin{proof}
Here the distinguished abelian normal subgroup 
is $A:=C\times B\cong Z_{4p}$, where $C:=Z_p\triangleleft G$ and $B:=\bra a\ket$. Set $L:=\F[V]$ and $R:=L^G$. 
We write $\theta|_C$ and $\theta|_B$ for the restriction of the character $\theta \in \hat{A}$ to $C$ or $B$, respectively, 
and we define accordingly $S|_C$ and $S|_B$ for any sequence $S$ over $\hat{A}$; 
note that $\theta = (\theta|_C, \theta|_B)$ by the natural isomorphism $\hat A\cong \hat C\times \hat B$.

We already proved in Proposition~\ref{prop:betadiciklikus} that $1 \le \beta_k(G) -4kp \le 2$. 
Suppose for contradiction that there is a $G$-module $V$ and a  monomial $m\in \F[V]^A$ with $\deg(m)=4pk+2$ and $\tau^G_A(m)\notin R_+^{k+1}$.
Given that the restriction of $\tau^Q_B$ to $L^A$ coincides with $\tau^G_A$, the sequence $\Phi(m)|_B$ is $kp$-extremal: indeed, otherwise 
$\tau^Q_B(m)\in (L_+^Q)^{kp+1}$ as $\deg(m)=\beta_{kp}(Q)$, and since $(L_+^Q)^{kp+1}\subseteq R_+^kL_+^Q$ by Lemma~\ref{lemma:reduction}, we get that $\tau^G_A(m)=\tau^Q_B(m)\in (R_+^kL_+^Q)\cap R_+$, 
but for any $f\in (R_+^kL_+^Q)\cap R_+$ we have $f=\frac1{[G:B]}\tau^G_A(\tau^A_B(f))\in \tau^G_A(R_+^k\tau^A_B(L_+^Q))\subseteq R_+^{k+1}$,
a contradiction.  
As a result  $\Phi(m)|_B=(1^t,3^s)$ by Proposition~\ref{Q extrem}, where $t > s$ can be assumed and $t+s=4pk+2$. 
Accordingly $m$ has a factorization 
\begin{align}\label{eq:Q fact} 
m=m_1\cdots m_l
\end{align} 
where $\Phi(m_i)|_B=(1,3)$ for $i\le s$ and $\Phi(m_i)|_B=(1^4)$ for $s < i\le l$,
so that $l=s+\frac{t-s}4$.  
Consider the sequence $S := (\theta(m_1)|_C,...,\theta(m_l)|_C)$; 
it contains at most one occurrence of $0$, 
for otherwise $m\in (L_+^A)^2L^A_{\ge 4pk-6}\subseteq R_+L^A_{>\beta_{k-1}(G)}$ by Lemma~\ref{lemma:reduction}, 
hence $\tau^G_A(m)\in R_+^{k+1}$, a contradiction. 
Moreover $S$ cannot be factored into $2k+1$ zero-sum sequences over $\hat C$, 
for otherwise  $\tau^G_A(m)\in R_+^{k+1}$ follows again as  
$m\in (L_+^A)^{2k+1}\in R_+^kL_+^A$.

We claim  that $\{ 1,...,l\}$ can be partitioned into two disjoint, non-empty subsets $U,V$ such that 
 the monomials $u = \prod_{i \in U} m_i$ and $v = \prod_{i \in V} m_i$ 
are $A$-invariant, $\tau_A^G(u^bv) \in R_+^{k+1}$ and $\deg(v) > 4p(k-1)+2 \ge \beta_{k-1}(G)$. 
Under these assumptions 
$\tau^G_A(m)=\tau^G_A(u)\tau^G_A(v)-\tau^G_A(u^bv)\in R_+^{k+1}$, since $\tau_A^G(v) \in R_+^k$
 and this will   refute our indirect hypothesis.

We will prove our claim by induction on $\frac{t-s}{4}$. 
Suppose first that $\frac{t-s}{4}=1$, i.e. $l=2pk$. Then $\theta(m_1)|_C=...=\theta(m_l)|_C$ for otherwise $S$ could be factored into $2k+1$ zero-sum sequences. 
Observe that if $x$ is a variable in $m_i$ and $y$ is a variable in $m_j$ where $i\neq j$ and $\theta(x)|_B=\theta(y)|_B$, 
then $\theta(x)=\theta(y)$, since otherwise swapping the variables 
$x$ and $y$ yields another factorization as in \eqref{eq:Q fact} where $l=2pk$ but not all $\theta(m_i)|_C$ are equal. 
We conclude that $\Phi(m) = (e^{2pk+3}, (3e)^{2pk-1})$ for some generator $e$ of $\hat{A}$. 
Then $U:=\{1,...,p\}$, $V := \{p+1,...,l\}$ is the required bipartition,
since $\Phi(u^bv)  $ is not similar to $\Phi(m)$ and consequently $\tau_A^G(u^bv) \in R_+^{k+1}$ by the above considerations.

For the rest it remains that $\frac{t-s}4 > 1$, hence $\Phi(m_{l-1})|_B=\Phi(m_l)|_B=(1^4)$. 
If $\theta(m_i)=0$ for some $i > s$, say $i=l$, then choosing $U= \{l\}$ gives the required factorization: 
indeed, $\Phi(u^bv)|_B=(1^r,3^s)$ where $r-s < t-s$ and consequently 
$\tau_A^G(u^bv) \in R_+^{k+1}$   by induction on $\frac{t-s}{4}$. 
If however $S$ contains at least $p+1$ non-zero elements then using Lemma~\ref{lemma:easy} 
we get a subset $I \subset \{1,...,l-2 \}$ such that $|I| \le p-1$ and $\theta(\prod_{i \in I} m_i) = - \theta(m_l)$.
Now set $U := I \cup \{ l\}$, $V := \{ 1,...,l-1\} \setminus I$ and observe that
$\Phi(u^bv)|_B=(1^r,3^s)$ where $r-s < t-s$. So we are done as before, 
provided that $|U| \le p-1$ or there is an index $i \in U$ such that $i \le s$,
because this guarantees that $\deg(u) \le 4p-2$. 

Otherwise it remains that 
$l = p+1$, $s=1$ and $\theta(m_1) =0$.
Here $m_1=xy$, where $\theta(x)|_B=1$ and $\theta(y)|_B=3$. 
If there is a variable $z$ in $m_2\dots m_l$ with $\w_C(z)\neq \theta_C(x)$, 
then by swapping the variables $x$ and $z$ we get back to a case considered already. 
Thus  $\Phi(m/y) = ((1,c)^{4p+1})$ for a non-zero element $c\in \hat C\cong Z_p$, 
and $\theta(y)=(3,-c)$. Here  $U:= \{1\}$, $V:=\{2,...,l\}$ is the required bipartition, 
because $\Phi(u^bv)=((1,-c),(3,c),(1,c)^{4pk})$, and since $c\neq -c$ it follows by the above considerations that $\tau^G_A(u^bv)\in R_+^{k+1}$. 
\end{proof}


\section{Proof of the main result}  \label{sec:index2} 

We shall use  for the semidirect product of two cyclic groups the notation: 
\[ Z_m \rtimes_d Z_n = \bra a,b:   a^m=1, b^n=1, bab^{-1}=a^d \ket  \quad \text{ where } d \in \mathbb{N}\mbox{ is coprime to }m \]

\begin{proposition}[Burnside 1894,  
 see for example \cite{brown} ch. IV.4] \label{burn} 
If $G$ is a finite $p$-group with a cyclic subgroup of index $p$ then it is one of the following:
\begin{enumerate}
\item $Z_{p^n}$ \quad ($n\geq 1$)
\item $Z_{p^{n-1}}\times Z_p$ \quad 
($n\geq 2$)
\item $M_{p^n} := Z_{p^{n-1}} \rtimes_d  Z_p \qquad d={p^{n-2}+1}$ \quad
($n\geq 3$)
\item $D_{2^n} 	:= Z_{2^{n-1}} \rtimes_{-1} Z_2$
\quad  ($n\geq 4$) 
\item $SD_{2^n} := Z_{2^{n-1}} \rtimes_d Z_2 \qquad d={2^{n-2}-1}$ 
\quad ($n\geq 4$) 
\item $Dic_{2^n} := \langle a,b\mid a^{2^{n-1}}=1, b^2=a^{2^{n-2}}, bab^{-1}= a^{-1}\rangle$ \quad 
($n\geq 3$) 
\end{enumerate} 
\end{proposition}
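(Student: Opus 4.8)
The final statement to prove is Proposition~\ref{burn}, the Burnside classification of finite $p$-groups with a cyclic subgroup of index $p$.

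\medskip

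The plan is to follow the classical approach via the structure of $\Aut(Z_{p^{n-1}})$, since any such group $G$ of order $p^n$ with cyclic subgroup $N = \langle a\rangle \cong Z_{p^{n-1}}$ of index $p$ is determined by how an element $b \notin N$ acts on $N$ by conjugation, together with the value $b^p \in N$. First I would dispose of the abelian case: if $G$ is abelian it is either cyclic ($Z_{p^n}$) or, being generated by an element of order $p^{n-1}$ and one more element, $Z_{p^{n-1}} \times Z_p$. So assume $G$ non-abelian; then $N$ is normal (index $p$ is the smallest prime dividing $|G|$, or directly: $N$ has index $p$ and contains the unique subgroup of its order), and conjugation by $b$ gives a non-trivial automorphism $\varphi \in \Aut(N)$ whose order divides $p$ (since $b^p \in N$ is central-ish... actually since $b^p \in N$ centralizes... more carefully, $\varphi^p$ is conjugation by $b^p \in N$, which is trivial on the abelian $N$, so $\varphi$ has order dividing $p$), hence order exactly $p$.

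\medskip

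The next key step is to pin down the order-$p$ subgroups (equivalently, order-$p$ elements) of $\Aut(Z_{p^{n-1}}) \cong (\mathbb{Z}/p^{n-1}\mathbb{Z})^\times$. For $p$ odd this group is cyclic of order $p^{n-2}(p-1)$, so it has a \emph{unique} subgroup of order $p$, generated by the map $a \mapsto a^{1+p^{n-2}}$; thus $\varphi$ is conjugate (replacing $b$ by a power) to this map, giving the relation $bab^{-1} = a^{1+p^{n-2}}$. This already forces $n \ge 3$ (we need $p^{n-2} \not\equiv 0$, i.e. the automorphism to be non-trivial, so $n-1 \ge 2$). One then has to determine $b^p$: a short computation shows $b^p$ must lie in the subgroup of $N$ fixed appropriately, and after adjusting $b$ by an element of $N$ one may take $b^p = 1$, yielding $M_{p^n}$. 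For $p = 2$ the group $(\mathbb{Z}/2^{n-1}\mathbb{Z})^\times \cong Z_2 \times Z_{2^{n-3}}$ (for $n \ge 4$) has \emph{three} subgroups of order $2$, generated respectively by $a \mapsto a^{-1}$ (giving, depending on $b^2$, either $D_{2^n}$ when $b^2 = 1$ or $Dic_{2^n}$ when $b^2 = a^{2^{n-2}}$), by $a \mapsto a^{1+2^{n-2}}$ (giving $M_{2^n}$, allowed for $n \ge 3$, absorbed into case 3), and by $a \mapsto a^{-1+2^{n-2}}$ (giving $SD_{2^n}$); for $n = 3$ the automorphism group has order $2$ so only $a\mapsto a^{-1}$ survives, producing $D_8$ ($= M_8$ here) and $Q_8 = Dic_8$. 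The case analysis on $b^2$: one shows that after modifying $b$ by a power of $a$, the only $G$-inequivalent choices for $b^2$ are $1$ and the unique involution $a^{2^{n-2}}$ in $N$, and in the $SD$ and $M$ cases the latter choice is equivalent to the former, so no new groups arise there.

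\medskip

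The main obstacle I expect is the bookkeeping for the $p=2$ case: correctly enumerating the three order-$2$ automorphisms, and then for each one carefully analyzing which values of $b^2 \in N$ give genuinely non-isomorphic groups (versus being eliminated by replacing $b$ with $ba^j$, using the identity $(ba^j)^2 = b^2 \cdot a^{j\varphi(?)}a^j = b^2 a^{j(d+1)}$ — the point being whether $j \mapsto j(d+1)$ is surjective mod $2^{n-1}$). For $d = -1$ this map is $j \mapsto 0$, so $b^2$ is \emph{not} adjustable and both values $1, a^{2^{n-2}}$ genuinely occur, distinguishing $D_{2^n}$ from $Dic_{2^n}$; for $d = -1 + 2^{n-2}$ and $d = 1 + 2^{n-2}$ one checks $j(d+1)$ does hit $a^{2^{n-2}}$, so $b^2$ can be normalized to $1$. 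Also one must note $Dic_{2^n}$ really needs the separate presentation given (it is not a split extension). Since the statement is cited to Burnside and to \cite{brown}, the cleanest write-up simply records the above reduction and refers to \cite{brown}, Chapter IV.4, for the detailed verification; I would present the automorphism-counting skeleton and defer the routine $b^p$ normalization to that reference.
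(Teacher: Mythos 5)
The paper offers no proof of this proposition at all: it is stated as a classical result of Burnside with a pointer to \cite{brown}, Ch.~IV.4, and is used as a black box in Section~\ref{sec:index2}. So there is no ``paper's proof'' to compare against; your write-up supplies the standard argument that the cited reference contains. Your skeleton is correct: normality of the index-$p$ cyclic subgroup $N=\langle a\rangle$, the observation that conjugation by $b$ induces an automorphism $\varphi$ of $N$ of order exactly $p$ in the non-abelian case (trivial on $N$ after raising to the $p$th power since $b^p\in N$), the dichotomy between the cyclic group $\Aut(Z_{p^{n-1}})$ for odd $p$ (unique order-$p$ automorphism $a\mapsto a^{1+p^{n-2}}$, giving only $M_{p^n}$) and the three involutions $-1$, $1+2^{n-2}$, $-1+2^{n-2}$ for $p=2$, $n\ge 4$, and the normalization of $b^p$ via $(ba^j)^p$. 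Your computation $(ba^j)^2=b^2a^{j(d+1)}$ correctly isolates why $d=-1$ is the only case where $b^2$ cannot be normalized to $1$ (the map $j\mapsto j(d+1)$ is zero there, while $b^2$ must be a fixed point of $\varphi$, hence $1$ or $a^{2^{n-2}}$, yielding $D_{2^n}$ versus $Dic_{2^n}$), and why the $M$ and $SD$ cases absorb both choices. The handling of $n=3$ for $p=2$ (only $D_8=M_8$ and $Q_8=Dic_8$ occur) matches the ranges of $n$ in the statement. Deferring the remaining routine verifications to \cite{brown} is consistent with what the authors themselves do, since they prove nothing here.
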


Let $H$ be one of the $2$-groups in the above list, 
$\bra a\ket$ an index $2$ subgroup in $H$, and $b\in H\setminus \bra a \ket$, so that $H=\bra a,b\ket$.
If $H$ is a $2$-group as in case (3)--(6) of Proposition~\ref{burn} then
for any odd integer $r > 1$ it is customary to denote by $M_{r2^n}$, $D_{r2^n}$, $SD_{r2^n}$, $Dic_{r2^n}$
the group $Z_r \rtimes_{-1} H$, where $b \in H$ acts on $Z_r$ by inversion $x \mapsto x^{-1}$
and $\bra a\ket$ centralizes $Z_r$.

\begin{proposition}\label{prop:index2cyclic}
Any finite group containing a cyclic subgroup of index two is isomorphic to
\[Z_s\times (Z_r\rtimes_{-1} H)\]
where $r,s$ are coprime odd integers, and $H$ is a $2$-group in Proposition~\ref{burn}. 
\end{proposition}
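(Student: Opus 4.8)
The plan is to classify finite groups $G$ with a cyclic subgroup $\langle c\rangle$ of index two by reducing to the $2$-group case already handled by Proposition~\ref{burn}. First I would pass to the Sylow structure: write $|G| = 2^a m$ with $m$ odd, and let $N$ be the odd part of the cyclic subgroup $\langle c \rangle$; since $\langle c\rangle$ is cyclic, $N$ is its unique Hall $2'$-subgroup, hence characteristic in $\langle c\rangle$, which is normal of index two in $G$, so $N \trianglelefteq G$. Now $N$ is a normal cyclic group of odd order with cyclic quotient $G/N$ of order $2^{a+1}$ or $2^a\cdot(\text{something})$; more precisely $\langle c\rangle/N$ is cyclic of index two in $G/N$, so $G/N$ is a $2$-group with a cyclic subgroup of index two, and Proposition~\ref{burn} applies to $G/N$.

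The next step is to split $N$ itself. The conjugation action of $G$ on the cyclic group $N$ is given by a homomorphism $G \to \Aut(N)$; since $\langle c\rangle$ is abelian and contains $N$, the subgroup $\langle c\rangle$ acts trivially on $N$, so the action factors through $G/\langle c\rangle \cong Z_2$. Thus $b$ (a generator of a complement to $\langle c\rangle$, or any element outside) acts on $N$ by an automorphism of order dividing $2$, i.e. $x \mapsto x^t$ with $t^2 \equiv 1 \pmod{|N|}$. I would then decompose $N = N_+ \times N_-$ according to whether a prime-power component of $N$ is centralized by $b$ or not — but to get the clean statement I need the action to be either trivial or inversion on each part. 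This is where the real work lies: I must show $t \equiv \pm 1$, or rather split off the "$+1$" part as the central factor $Z_s$ and the "$-1$" part as the $Z_r$ on which $b$ acts by inversion. In general $t^2\equiv 1 \pmod{|N|}$ has more solutions than $\pm1$ when $|N|$ has several prime factors, so the honest decomposition is $N = \prod_p N_p$ with $b$ acting as $\pm 1$ on each $N_p$ (this uses that the automorphism group of an odd prime-power cyclic group is cyclic, so an order-$\le 2$ element is $\pm1$); collecting the $+1$ primes gives $Z_s$ and the $-1$ primes gives $Z_r$, with $\gcd(r,s)=1$ and both odd.

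Finally I would assemble the pieces. Having $N = Z_s \times Z_r$ with $b$ centralizing $Z_s$ and inverting $Z_r$, I claim $Z_s$ is central in all of $G$: it is centralized by $b$ and by $\langle c\rangle$ (as $\langle c\rangle$ is abelian and contains $Z_s$), and these generate $G$. So $G = Z_s \times G'$ where $G' = Z_r \cdot \langle a, b\rangle$ with $\langle a\rangle = \langle c\rangle \cap (\text{$2$-part} \times Z_r)$; here $\langle a,b\rangle$ maps onto the $2$-group $G/N \cong H$ from Proposition~\ref{burn}, and I need that $\langle a, b\rangle$ is actually isomorphic to $H$ — i.e. that the extension $1 \to (\text{$2$-part of }\langle c\rangle) \to \langle a,b\rangle \to \langle b\rangle\langle c\rangle/(\ldots) $ doesn't create something new. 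The cleanest route: $\langle a \rangle$ = Sylow-$2$ of $\langle c\rangle$ is cyclic of index two in $\langle a,b\rangle$, so by Proposition~\ref{burn} again, $\langle a,b\rangle$ is one of the six $2$-groups $H$; and the relation $bab^{-1}=a^d$ together with $b$ inverting $Z_r$ and centralizing nothing extra gives precisely the twisted product $Z_r\rtimes_{-1}H$ as defined in the paragraph preceding the statement. The main obstacle I anticipate is the bookkeeping in this last assembly step — verifying that the semidirect-product conventions match and that no case of $b^2$ (which may equal $a^{2^{n-2}}$ in the dicyclic case, or $1$ otherwise) interferes with the splitting of the odd part — but this is routine once the $t\equiv\pm1$-on-each-prime observation is in place, so I do not expect a genuine difficulty, only care.
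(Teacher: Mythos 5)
Your argument follows essentially the same route as the paper's: the odd part $N$ of the cyclic subgroup $\langle c\rangle$ is characteristic in it and hence normal in $G$; the conjugation action on $N$ factors through $G/\langle c\rangle\cong Z_2$; and since $\Aut(Z_{p^k})$ is cyclic for $p$ odd, the induced automorphism of order dividing two is the identity or inversion on each odd Sylow factor, which yields the splitting $N=Z_s\times Z_r$ with $Z_s$ central. All of this is correct and matches the paper.

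The one genuine defect is in the assembly of the $2$-part. You take $b$ to be ``a generator of a complement to $\langle c\rangle$, or any element outside.'' A complement to $\langle c\rangle$ need not exist: in $Q=Dic_8$ (one of the groups the main theorem must cover) every element outside the cyclic index-two subgroup has order $4$ and meets it in the centre. And an arbitrary element outside $\langle c\rangle$ need not be a $2$-element: in $Z_3\times D_8$, whose cyclic index-two subgroup is $Z_3\times Z_4$, the outside elements $(x,g)$ with $x\neq 1$ and $g$ a reflection have order $6$, so your $\langle a,b\rangle$ would contain $Z_3$ and would not be a $2$-group, and Proposition~\ref{burn} could not be applied to it. The repair is exactly what the paper does: once $Z_m\trianglelefteq G$ is known, the Schur--Zassenhaus theorem gives $G=Z_m\rtimes H$ with $H$ a Sylow $2$-subgroup, and $H$ contains the (normal) Sylow $2$-subgroup $\langle a\rangle$ of $\langle c\rangle$ as an index-two cyclic subgroup; equivalently, you must choose $b$ inside a Sylow $2$-subgroup containing $\langle a\rangle$. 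The remaining convention-matching you flag at the end |identifying your $\langle a\rangle$ with the distinguished index-two cyclic subgroup in the presentation of $H$| is a real point too, and the paper disposes of it by noting that $\Aut(H)$ acts transitively on the relevant index-two subgroups of $H$; without some such remark the stated action of $H$ on $Z_r$ (namely $\langle a\rangle$ centralizing and $b$ inverting) is not yet in the normal form $Z_r\rtimes_{-1}H$.
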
 

\begin{proof} Let $G$ be a finite group with an index two cyclic subgroup $C$. 
Then $C$ uniquely decomposes  as $C=Z_m\times Z_{2^{n-1}}$ for some odd integer $m>0$ and $n\geq 1$. As $Z_m$ is a characteristic subgroup of $C$, it is normal in $G$. 
Thus by the Schur-Zassenhaus theorem $G=Z_m\rtimes H$ for a Sylow $2$-subgroup $H$ of $G$. 
Moreover, the characteristic direct factor $Z_{2^{n-1}}$ is also normal in $G$, hence 
we may suppose that it is identical to the index two cyclic subgroup $\bra a \ket \le H$ 
(as the automorphism group of $H$ acts transitively on the set of index two subgroups of $H$). 
Now $Z_m$  decomposes uniquely as a direct product $Z_m=P_1\times\cdots\times P_l$ of its Sylow subgroups.  
After a possible renumbering we may assume that $H$ centralizes $P_1,\ldots,P_t$, 
and $H/\bra a \ket$ acts on $P_{t+1},\ldots,P_l$ via the automorphism $x\mapsto x^{-1}$. 
Setting $Z_s:=P_1\times\cdots\times P_t$, $Z_r:=P_{t+1}\times\cdots\times P_l$ 
we obtain the desired conclusion.  
 \end{proof}


\begin{theorem}\label{thm:betaindex2} 
If $G$ is a non-cyclic group with a cyclic subgroup of index two then
\[ \beta_k(G) = \frac{1}{2} |G| k +
\begin{cases}
2 	& \text{ if } G=Dic_{4n}, \text{  $n$ even}\\
	& \text{ or } G = Z_r \rtimes_{-1} Z_4, \text{ $r$ odd } \\
1	& \text{ otherwise }
\end{cases}\]
\end{theorem}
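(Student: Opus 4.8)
The plan is to combine the classification in Proposition~\ref{prop:index2cyclic} with the case-by-case computations already performed in the preceding sections, using the lower bound of Theorem~\ref{thm:lower} throughout to match the upper bounds. By Proposition~\ref{prop:index2cyclic} every non-cyclic $G$ with a cyclic subgroup of index two is of the form $Z_s\times(Z_r\rtimes_{-1}H)$ with $r,s$ coprime odd and $H$ one of the $2$-groups in Proposition~\ref{burn}; since $G$ is non-cyclic, $H$ is not cyclic, so $H\in\{Z_{2^{n-1}}\times Z_2,\ M_{2^n},\ D_{2^n},\ SD_{2^n},\ Dic_{2^n}\}$. First I would dispose of the lower bound uniformly: in every case $G$ has an abelian normal subgroup $N$ (for instance the cyclic subgroup $A$ of index two, or $Z_{rs}\times\bra a\ket$) with $G/N$ a small abelian group, and Theorem~\ref{thm:lower} together with $\D(Z_n)=n$ gives $\beta_k(G)\ge \beta_k(N)+\D(G/N)-1\ge \tfrac12|G|k+1$; in the two exceptional families I would instead use the explicit representations already exhibited (the matrices \eqref{Qrep} in the proof of Theorem~\ref{thm:G_1} and Proposition~\ref{prop:betadiciklikus}) to get the extra $+1$.

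Next I would handle the upper bound family by family. For $H=Z_{2^{n-1}}\times Z_2$ and $H=M_{2^n}$, the group $G$ has an abelian subgroup of index two, namely $A$, so by Lemma~\ref{lemma:reduction}(ii) $\beta_k(G,V)\le\beta_{2k}(A,V)\le\D_{2k}(A)$, and since $A$ is cyclic, $\D_{2k}(A)=2k\D(A)=k|A|=\tfrac12|G|k$ — wait, this needs care, because $A$ here need not be cyclic; more precisely one writes $G=A\rtimes_{-1}Z_2$-type only when $H$ acts by inversion. The cleaner route, which I would actually follow, is: whenever $G=A\rtimes_{-1}Z_2$ with $A$ abelian, Theorem~\ref{thm:dihedraltype} gives $\beta_k(G)\le k\D(A)+1$, and for $A$ cyclic this is exactly $\tfrac12|G|k+1$; this covers $D_{r2^n}$ (i.e. $H=D_{2^n}$, giving $G=Z_s\times D_{r2^n}$, reduce the $Z_s$ factor via Theorem~\ref{thm:lower} or absorb it into $A$) and more generally any $G$ whose relevant normal subgroup is cyclic of index two with $b$ acting by inversion. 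For $H=M_{2^n}$ and $H=SD_{2^n}$, $b$ does not act by pure inversion on all of $A$, but $G/Z(G)$ (or $G$ modulo a suitable central cyclic subgroup $C$) is a dihedral-type group of order $\tfrac12|G|/|C|$ whose generalized Noether number is known from Corollary~\ref{cor:dihedralbeta}, so Lemma~\ref{lemma:reduction}(iii) gives $\beta_k(G)\le\beta_{\beta_k(G/C)}(C)=\beta_k(G/C)\cdot|C|$, which after plugging in Corollary~\ref{cor:dihedralbeta} yields $\le\tfrac12|G|k+|C|$, and a degree-one sharpening (using that a monomial of that degree with more than one $C$-invariant factor is reducible, as in the proofs of Proposition~\ref{extrem_Dn} and Theorem~\ref{thm: beta Zp Q}) brings it down to $\tfrac12|G|k+1$.

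The two genuinely exceptional families are $G=Dic_{4n}$ with $n$ even and $G=Z_r\rtimes_{-1}Z_4$ with $r$ odd (the case $n=1$, $s=1$ of Theorem~\ref{thm:G_1}), and these are exactly the ones already pinned down: Proposition~\ref{prop:betadiciklikus} gives $\beta_k(Dic_{4n})=2nk+2=\tfrac12|G|k+2$, and Theorem~\ref{thm:G_1} gives $\beta_k(Z_r\rtimes_{-1}Z_4)=2rk+2=\tfrac12|G|k+2$; for the products $Z_s\times Dic_{4n}$ and $Z_s\times(Z_r\rtimes_{-1}Z_4)$ with $s>1$, Theorem~\ref{thm:G_1} and Theorem~\ref{thm: beta Zp Q} (combined with multiplicativity-type reductions for coprime direct factors, which follow from Lemma~\ref{lemma:reduction} and Theorem~\ref{thm:lower}) show the answer drops back to $\tfrac12|G|k+1$, consistent with the ``otherwise'' case. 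Concretely I would organize the write-up as: (1) invoke Proposition~\ref{prop:index2cyclic}; (2) reduce to the case $s=1$ by a lemma that $\beta_k(Z_s\times K)=\beta_k(K)+\D(Z_s)-1=\beta_k(K)$ is false in general but the needed inequality $\beta_k(Z_s\times K)\le\tfrac12|Z_s\times K|k+c_K$ follows from Lemma~\ref{lemma:reduction}(iii) applied with the normal subgroup $K$ and $(Z_s\times K)/K\cong Z_s$, noting $\beta_k(Z_s)=\D_k(Z_s)=ks$; (3) treat each $H$ as above. The main obstacle I anticipate is the bookkeeping in step (2)–(3) for $H=SD_{2^n}$ and $H=M_{2^n}$ and their odd-twisted relatives $SD_{r2^n}$, $M_{r2^n}$: these were not given their own section, so one must verify that the reduction $\beta_k(G)\le\beta_k(G/C)|C|$ with the extremal-monomial degree-one improvement genuinely gives $+1$ and not $+2$, i.e. that no sequence of length $\tfrac12|G|k+2$ is $k$-extremal. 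This requires an argument paralleling Proposition~\ref{extrem_Dn} and Proposition~\ref{Q extrem} for each such $H$, and distinguishing precisely why $Dic_{4n}$ ($n$ even) and $Z_r\rtimes_{-1}Z_4$ resist that improvement while $SD$, $M$, and $Dic_{4n}$ with $n$ odd do not — the parity of $n$ and the structure of $b^2$ being the crux.
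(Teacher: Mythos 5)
Your overall architecture (invoking Proposition~\ref{prop:index2cyclic}, getting the lower bound $\tfrac12|G|k+1$ from Theorem~\ref{thm:lower} and the extra $+1$ in the exceptional families from the representation \eqref{Qrep}, stripping the factor $Z_s$ via Lemma~\ref{lemma:reduction}(iii) so that $\beta_k(G)\le\beta_{sk}(Z_r\rtimes_{-1}H)$, and quoting Theorem~\ref{thm:G_1}, Proposition~\ref{prop:betadiciklikus} and Theorem~\ref{thm: beta Zp Q} for the cyclic and dicyclic $H$) matches the paper. The genuine gap is your treatment of the upper bound for $H=Z_2\times Z_{2^{n-1}}$, $H=M_{2^n}$ and $H=SD_{2^n}$. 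Neither of your two devices works as stated: Theorem~\ref{thm:dihedraltype} is inapplicable because in these groups $b$ does not act by inversion on the index-two cyclic subgroup $A$ (and passing to $A$ itself only gives $\beta_k(G)\le\beta_{2k}(A)=\D_{2k}(A)=2k|A|=k|G|$, twice what is needed); and the quotient route $\beta_k(G)\le\beta_{\beta_k(G/C)}(C)=|C|\,\beta_k(G/C)=\tfrac12|G|k+|C|$ overshoots by $|C|-1$, so closing it is not a ``degree-one sharpening'' but would require a full extremal-sequence analysis for each of these families, comparable in weight to Sections~\ref{sec:extremal}--\ref{sec:quaternion}. You flag this yourself as the main obstacle, but it is precisely the missing content, and the dichotomy you hope to extract from it (why $Dic_{4n}$ with $n$ even resists while $M$ and $SD$ do not) is never actually produced.

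The paper avoids all of this by going \emph{down} to a non-abelian subgroup rather than up to a quotient: in each of these cases $Z_r\rtimes_{-1}H$ contains a normal subgroup of index two that is already of solved type --- $\langle a^2,b\rangle\cong Z_{2^{n-2}}\times D_{2r}$ inside $M_{2^nr}$ and $\langle a^2,b\rangle\cong D_{2^{n-1}r}$ inside $SD_{2^nr}$, while $Z_r\rtimes_{-1}(Z_2\times Z_{2^{n-1}})\cong Z_{2^{n-1}}\times D_{2r}$ is handled by Lemma~\ref{lemma:reduction}(iii) with normal subgroup $D_{2r}$ and cyclic quotient. The point that makes this lossless is Corollary~\ref{cor:dihedralbeta}: $\beta_{jk}(D_{2m})=jkm+1$, so the index $j$ is absorbed into the subscript of the generalized Noether number, the coefficient of $k$ comes out as exactly $\tfrac12|G|$, and the additive constant stays $+1$; no extremal analysis for $M$ or $SD$ is ever needed. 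If you replace your quotient-plus-sharpening step by this subgroup reduction, the rest of your proposal goes through.
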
 

\begin{proof}
If $G$ is any group with a cyclic subgroup $A= \bra a \ket$ of index $2$, then 
Theorem~\ref{thm:lower} gives us the following lower bound:
\[ \beta_k(G) \ge \beta_k(A) + \D(G/A) -1 = k|A| + \D(Z_2) -1 = \tfrac{1}{2}|G| +1 \]
To establish the precise value of the generalized Noether number $\beta_k$ for these groups,
by Proposition~\ref{prop:index2cyclic} we will have to  consider  the groups of the form $G:= Z_s \times (Z_r \rtimes_{-1} H)$
where $H$ is one of the groups of order $2^n$ listed in Proposition~\ref{burn}. 
In all these cases  $\beta_k(G) \le \beta_{sk}(Z_r \rtimes_{-1} H)$  by Lemma~\ref{lemma:reduction}.

(1) If $H =Z_{2^n}$ then by Theorem~\ref{thm:G_1} we have $\beta_k(G)  = 2^{n-1} rs k + 1$ except if 
$n=2$ and $s=1$, in which case $\beta_k(G)  = 2^{n-1} rs k + 2$

(2) If $H =Z_2 \times Z_{2^{n-1}}$ by the isomorphism $Z_r \rtimes_{-1} (Z_{2} \times Z_{2^{n-1}}) \cong Z_{2^{n-1}} \times D_{2r}$ 
we get from the application of Lemma~\ref{lemma:reduction} and Corollary~\ref{cor:dihedralbeta} that
\begin{align}
\beta_k(G) \le \beta_{sk}(Z_{2^{n-1}} \times D_{2r}) \le \beta_{2^{n-1}sk}(D_{2r}) \le 2^{n-1} rs k  +1 
\end{align}

(3) If $H= M_{2^n}$ then the group $Z_r \rtimes_{-1} M_{2^n} =  M_{2^nr}$ will contain a subgroup $C = \bra a^2,b \ket \cong Z_{2^{n-2}} \times D_{2r}$.
The  subgroup $N:=Z_s\times C$ has index $2$ in $G$ and falls under case (2), hence by Lemma~\ref{lemma:reduction} and case (2) we have
\begin{align} 
\beta_k(G)=\beta_{2k}(N)=2^{n-1}krs+1
\end{align}

(4) If $H= D_{2^n}$ then $G = Z_s \times D_{2^nr}$ and we are  done by Corollary~\ref{cor:dihedralbeta}

(5) If $H = SD_{2^n}$ then the group $Z_r \rtimes_{-1} SD_{2^n} = SD_{2^nr}$  contains a subgroup $B= \langle a^2,b\rangle\cong D_{2^{n-1} r}$. 
Observe that  $B$ is a normal subgroup, as it has index $2$,
hence by Lemma~\ref{lemma:reduction} and Corollary~\ref{cor:dihedralbeta} we get that
\begin{align} 
\beta_k(G) \le \beta_{sk}(SD_{2^nr}) \le \beta_{2sk} (D_{2^{n-1} r}) \le 2^{n-1}rsk +1
\end{align}

(6) If $H = Dic_{2^n}$ then for $n=2$ we get back to case (2), as $Dic_4 = Z_2 \times Z_2$;
if however $n \ge 3$ then the quaternion group $Q$ is a subgroup of index $2^{n-3}r$ in $Z_r \rtimes_{-1} H$, 
therefore by Proposition~\ref{prop:betadiciklikus} we have $\beta(G) = 2^nrs k +2$ if $s=1$ and
for $s > 1$ we get using Lemma~\ref{lemma:reduction} combined with Theorem~\ref{thm: beta Zp Q} 
that for any prime $p$  dividing $s$:
\begin{align} 
\beta_k(G) \le \beta_{k2^{n-3}r} (Z_s \times Q) \le \beta_{k2^{n-3}rs/p} (Z_p \times Q) \le 2^{n-1}rsk +  1
\end{align}
With this all possibilities are accounted for and our claim is established.
\end{proof}


\end{document}